\tikzset{
e+ /.tip = {[sep=0pt]|[sep=3pt]_},
e+4 /.tip = {[sep=0pt]|[sep=5pt]_},
e+5 /.tip = {[sep=0pt]|[sep=5pt]_},
e+6 /.tip = {[sep=0pt]|[sep=6pt]_},
e+8 /.tip = {[sep=0pt]|[sep=9pt]_},
e+2 /.tip = {[sep=0pt]|[sep=2pt]_}
}
\tikzset{arc/.style = {->,> = stealth', shorten >=2.3pt}}
\tikzset{
dot/.style = {circle, draw = black, fill, minimum size=#1,
              inner sep=0pt, outer sep=0pt},
dot/.default = 3.8pt
}
\definecolor{CornflowerBlue}{rgb}{0.39, 0.58, 0.93}
\definecolor{LavenderMagenta}{rgb}{0.93, 0.51, 0.93}
\definecolor{PastelOrange}{rgb}{1.0, 0.7, 0.28}
\def\Y{\mathcal Y}
\def\halfedges{\textbf{E}}
\newcommand{\skeleton}[1]{D_{#1}^{\mathrm{path}}}
\newcommand{\trailskeleton}[1]{D_{#1}^{\mathrm{trail}}}
\newcommand{\set}[1]{\left\lbrace #1 \right\rbrace}
\newcommand{\COMMENT}[1]{} % This is for comments which should persist as remarks about writing decisions.
\DeclareMathOperator{\InVertices}{In}
\newtheorem{theorem}{Theorem}[section]
\newtheorem{lemma}[theorem]{Lemma}
\newtheorem{corollary}[theorem]{Corollary}
\newtheorem{claim}{Claim}[theorem]
\theoremstyle{definition}
\newtheorem{remark}[theorem]{Remark}
\newtheorem*{theorem*}{Theorem}
\crefname{enumi}{}{}
\crefname{enumii}{}{}
\crefname{definition}{definition}{definitions}
\crefname{section}{section}{sections}
\crefname{subsection}{Subsection}{subsections}
\crefname{lemma}{lemma}{lemmata}
\crefname{remark}{remark}{remarks}
\crefname{theorem}{theorem}{theorems}
\crefname{corollary}{corollary}{corollaries}
\crefname{figure}{figure}{figures}
\crefname{proposition}{proposition}{propositions}
\crefname{observation}{observation}{observations}
\crefname{claim}{claim}{claim}
\def\lqedsymbol{\ifmmode$\lrcorner$\else{\unskip\nobreak\hfil
		\penalty50\hskip1em\null\nobreak\hfil$\rule{1.2ex}{1.2ex}$
		\parfillskip=0pt\finalhyphendemerits=0\endgraf}\fi}
\newenvironment{claimproof}[1][\proofname]
{%
	\proof[#1]%
}
{%
	\endproof%
}
\title{A structure theorem for rooted connectivity in bidirected graphs}
\keywords{Menger's Theorem, flames, Pym's theorem, bidirected graph, connectivity}
\subjclass[2020]{05C40 (Primary) 05C38, 05C20 (Secondary)}
\author[T.\,Abrishami]{Tara Abrishami}
\author[N.\,Bowler]{Nathan Bowler}
\author[A.\,Jo\'{o}]{Attila Jo\'{o}}
\author[F.\,Reich]{Florian Reich}
\author[Q.\,Tao]{Qiuzhenyu Tao}
\address{Tara Abrishami: Stanford University, Department of Mathematics}
\email{tara.abrishami@stanford.edu}
\address{Nathan Bowler, Attila Jo\'{o}, Florian Reich, Qiuzhenyu Tao: Universit{\"a}t Hamburg, Department of Mathematics, Bundesstra{\ss}e~55 (Geomatikum), 20146~Hamburg, Germany}
\email{\{nathan.bowler, attila.joo, florian.reich, qiuzhenyu.tao\}@uni-hamburg.de}
\thanks{T.A. is supported by the National Science Foundation Award Number DMS-2303251 and the Alexander von Humboldt Foundation.
A.J. was funded by the Deutsche Forschungsgemeinschaft (DFG) - Grant No. 513023562.
F.R. is supported by a doctoral scholarship of the Studienstiftung des deutschen Volkes.
Q.T. is supported by China Scholarship Council (No.~202206770031) and Sino-German (CSC-DAAD) Postdoc Scholarship Program, 2022 (57607866).
}
\begin{document}
%\begin{abstract}
%     Bidirected graphs are a generalization of directed graphs which appears in the study of graphs with a perfect matching.

%     In this paper, we summarize the rooted connectivity in bidirected graphs in terms of directed graphs.
%     %   In this paper, we establish a global overview about the rooted connectivity in bidirected graphs in terms of directed graphs.
% %
%     %In this paper we describe the rooted connectivity in bidirected graphs in terms of directed graphs.
%     As applications, we prove Lovász' flame theorem, Pym' theorem and a strong variant of Menger's theorem for a class of bidirected graphs and provide counterexamples in the general case.
    
%     %for edge-disjoint paths in edge-clean bidirected graphs. Moreover, we obtain similar results for vertex-disjoint paths.% in clean bidirected graphs.
% \end{abstract}

\begin{abstract}
    Recently, bidirected graphs have received increasing attention from the graph theory community with both structural and algorithmic results.
   Bidirected graphs are a generalization of directed graphs, consisting of an undirected graph together with a map assigning each endpoint of every edge either sign $+$ or $-$. The connectivity properties of bidirected graphs are more complex than those of directed graphs and not yet well understood.
    
    In this paper, we show a structure theorem about rooted connectivity in bidirected graphs in terms of directed graphs.
    %   In this paper, we establish a global overview about the rooted connectivity in bidirected graphs in terms of directed graphs.
%
    %In this paper we describe the rooted connectivity in bidirected graphs in terms of directed graphs.
    As applications, we prove Lovász's flame theorem, Pym's theorem and a strong variant of Menger's theorem for a class of bidirected graphs and provide counterexamples in the general case.
    
    %for edge-disjoint paths in edge-clean bidirected graphs. Moreover, we obtain similar results for vertex-disjoint paths.% in clean bidirected graphs.
\end{abstract}
\maketitle

\section{Introduction}

A \emph{bidirected graph} is a graph $B$ endowed with a sign function $\sigma$ that assigns a sign $\{+, -\}$ to each end of every edge of $B$. The purpose of the sign function is to restrict the structure of walks in the bidirected graph: if two edges $uv$ and $vw$ appear consecutively in a walk, then the sign of edge $uv$ at $v$ must be the opposite of the sign of edge $vw$ at $v$. Stated informally: every walk must change signs when passing through a vertex.

Bidirected graphs were introduced independently by Kotzig \cites{Kotzig1959, Kotzig1959b, Kotzig1960} and by Edmonds and Johnson \cite{edmonds2003matching}. Recently, much research on bidirected graphs has been motivated by the fact that they are related to the structure of perfect matchings in (undirected) graphs \cite{wiederrecht2020thesis, bowler2023mengertheorembidirected, bowler2024hitting, bowler2025ConnectoidsI, KITA2017, Nickel2025, Ghorbani2025}. In particular, there is a way to translate between undirected graphs with perfect matchings and bidirected graphs. Suppose that $G$ is an undirected graph with a perfect matching $M$. We construct a bidirected graph $(B, \sigma)$ from $G$ as follows. For each edge $uv$ of $M$: 
\begin{itemize} 
\item choose an end $u$ arbitrarily;
\item assign every edge $uw$ with $w \neq v$ the sign $-$ at $u$; 
\item assign every other edge $vw$ with $w \neq u$ the sign $+$ at $v$; and 
\item contract edge $uv$. %assign edge $uv$ the sign $+$ at $u$ and sign $-$ at $v$; 

\end{itemize}
Because $M$ is a perfect matching, this procedure assigns every end of every edge a sign. This procedure is reversible: from every bidirected graph, we can construct a corresponding undirected graph with a perfect matching. 

% We can also construct from any bidirected graph $(B, \sigma)$ an undirected graph $G$ with a perfect matching by the following procedure: 
% \begin{itemize}
%     \item for every vertex $v$ of $B$, add vertices $v^+, v^-$ to $V(G)$ and edge $v^+v^-$ to $E(G)$; 
%     \item for every edge $uv$ of $B$, let $\alpha$ be the sign of $uv$ at $u$ and $\beta$ the sign of $uv$ at $v$. Add edge $u^\alpha v^\beta$ to $E(G)$. 
% \end{itemize}
% Every graph $G$ formed from the above procedure has a perfect matching, because the set of edges $\{v^+v^- \mid v \in V(B)\}$ is a perfect matching of $G$. 

There are several open problems and research directions in structural graph theory dedicated to studying the structure of perfect matchings in graphs. Matching minors and perfect matching width were introduced as a substructure and a width parameter that interact well with the structure of matchings in graphs. Norin \cite{norine2005matching} proposed the Matching Grid Conjecture, an analog of the Grid Minor Theorem, to relate these two concepts. The conjecture states that, given a graph with a perfect matching, either the graph has bounded perfect matching width or it contains a large grid as a matching minor. Using the correspondence between graphs with perfect matchings and bidirected graphs, one can show that matching minors are a bidirected graph analog of butterfly minors (which are themselves a directed graph analog of minors). In fact, by translating matching minors, perfect matching width, and the Matching Grid Conjecture to the bidirected graph setting, the Matching Grid Conjecture can be understood as a Bidirected Grid Conjecture, i.e. a conjectured grid theorem for bidirected graphs. 

Recently, Hatzel, Rabinovich, and Wiederrecht \cite{hatzel2019cyclewidth} proved the Matching Grid Conjecture for bipartite graphs. Their proof relied on two important insights. First, if we start with a bipartite graph $G$ with a perfect matching and construct a bidirected graph $(B, \sigma)$ from $G$ using the procedure described above, the bidirected graph $(B, \sigma)$ is in fact a directed graph, i.e. every edge in $(B, \sigma)$ can be traversed in at most one direction. They then used the Directed Grid Theorem \cite{kawarabayashi2015directed} to prove the Matching Grid Conjecture in this case. 

Inspired by the importance of understanding connectivity to the Matching Grid Conjecture, we study the connectivity of bidirected graphs. Because of the way that walks are defined in bidirected graphs, trails and paths have more complicated structures than in undirected or directed graphs. For example, if a bidirected graph $B$ contains a trail from $x$ to $y$, it may not necessarily contain a path from $x$ to $y$. Since connectivity in bidirected graphs is so complex, we make two assumptions: we work with rooted graphs (and rooted connectivity), and we assume further that the bidirected graphs are \emph{clean}, meaning that they exclude certain types of trails. 

The main result of this paper is a structure theorem for rooted clean bidirected graphs that models any given bidirected graph on a directed graph. The rooted connectivity of the bidirected graph can then be determined through the rooted connectivity of the digraph model. We in fact have two versions of this structure theorem, one tailored to studying edge connectivity and one tailored to vertex connectivity. Using these structure theorems, we prove versions of Lovasz's flame theorem, Pym's theorem, and Menger's theorem for edge-disjoint and vertex-disjoint paths in bidirected graphs. We also give examples showing that these results may fail without our assumption of cleanness.

\subsection{Overview of the proof}
The goal of our main result is to model bidirected graphs on directed graphs that represent their connectivity. Let $B$ be a rooted clean bidirected graph. First, we partition the edges of $B$ into two groups: the directable edges, which can be traversed in at most one direction on paths (resp. trails) from the root, and the undirectable edges, which can be traversed in both directions in this way. (If every edge of $B$ is directable, then $B$ is in fact  a directed graph.) The set of directable edges forms the basis for the directed graph on which we model $B$. The undirectable edges, meanwhile, can be partitioned into connected components of vertices that each have connectivity one from the root of $B$. These components are represented in the directed graph via a single vertex.

All of our results have algorithmic versions. The main difficulty is to find an algorithm which checks in polynomial time whether an edge is trail-directable. However, as explained in \cite{bowler2023mengertheorembidirected}, such an algorithm can be quickly obtained from Edmonds’ Blossom algorithm. Given this, it should be clear from our constructions that each of our decompositions of a bidirected graph can also be constructed in polynomial time, which yields efficient algorithms for all our applications.

\subsection{Structure of the paper} We begin by introducing the notation related to bidirected graphs in~\cref{sec: notation}. In \cref{sec: edge decmp}, we prove our main result, a structure theorem for rooted connectivities of bidirected graphs. In \Cref{sec:edge-menger,sec:edge-flame,sec:edge-pym}, we use our structure theorem to generalize the edge-disjoint versions of Menger's theorem, Lovász flame theorem, and Pym's theorem, respectively, to edge-clean bidirected graphs. In \cref{sec: from edge to vertex}, we derive the vertex-disjoint variants of these applications for bidirected graphs. Finally, we adapt our structure theorem to the vertex-disjoint context in \cref{sec: vertex-decomp}.

\section{Preliminaries and Notation}\label{sec: notation}
For the standard notation for undirected graphs we refer the reader to \cite{diestel2024graph}.
 A \emph{bidirected graph}~$B = (G, \sigma)$ consists of a loopless multigraph~$G = (V, E)$, a corresponding set of \emph{half-edges} defined as
    \begin{equation*}
        \halfedges(B) := \set{ (e,u) \colon e \in E \text{ and } u \text{ is an endpoint of }  e }
    \end{equation*}
and a \emph{signing}~$\sigma: \halfedges \to \set{+, -}$ assigning to each half-edge~$(e,u)$ its \emph{sign}~$\sigma(e,u) := \sigma((e,u))$; we say that~\emph{$e$ has sign~$\sigma(e,u)$ at~$u$}.
Then~$V(B) := V$ is the \emph{vertex set} of~$B$ and~$E(B) := E$ is its \emph{edge set}.
We refer to the elements of~$V(B)$ and~$E(B)$ as the \emph{vertices} and the \emph{edges} of~$B$, respectively. A \emph{signed vertex} of~$B$ is a pair~$(v, \alpha)$ of a vertex~$v$ of~$B$ and a sign~$\alpha \in \{+, -\}$.
Given a vertex set~$X$, we let~$\mathcal{V}(X) := \{ (v,\alpha):\  v\in V(B), \alpha  \in \{+, -\}\}$. For an edge set $F$, we write $V(F)$ for the set of the endpoints of the edges in $F$.

An \emph{oriented edge}~$\ve$ of a bidirected graph~$B$ is formally defined as a triple~$(e, u, v)$ where~$e$ is an edge of~$B$ with incident vertices~$u, v \in V(B)$; we call~$e$ its \emph{underlying edge},~$u$ its \emph{tail} and~$v$ its~\emph{head}, and think of~$\ve$ as orienting~$e$ from~$u$ to~$v$.
The edge~$e$ has precisely two \emph{orientations}, one with startvertex~$u$ and endvertex~$v$ and the other one with startvertex~$v$ and endvertex~$u$.
We denote the two orientations of~$e$ as~$\ve$ and~$\ev$; there is no default orientation of~$e$, but if we are given one of them as~$\ve$, say, then the other one is written as~$\ev$.
Given an oriented edge~$\ve$ we conversely write~$e$ for its underlying edge.

In every drawing involving bidirected graphs, we depict the signs of halfedges by drawing the signs onto the edge, which results in a bar perpendicular to the edge at incident vertices with sign $+$ and none at incident vertices with sign $-$, see e.g.\ on the left at  \cref{fig:edge-decomp}.

A \emph{trail} $T$ in $B$ is a sequence $v_0 \ve_1 v_1 \ve_2 v_2 \dots v_{\ell - 1} \ve_\ell v_\ell$ of vertices $(v_i)_{0 \leq i \leq \ell}$ and oriented edges $(\ve_i)_{1 \leq i \leq \ell}$ of $B$ where $e_i\neq e_j$ for $i\neq j$ such that $\ve_{i}$ has tail $v_{i-1}$ and head $v_i$ for $i \in [\ell]$ and such that $\sigma(e_{i+1}, v_i) = - \sigma(e_i, v_i)$ for $i \in [\ell - 1]$. Trails consisting of a single vertex are called \emph{trivial}. Let $E(T)$ be the set of edges that have an orientation appearing in $T$. 
Let $T^{-1}$ stand for the trail that we obtain by going along $T$ backwards, more precisely by 
reversing the orientations and the order of the vertices and edges of $T$. The trails $S$ and $T$ are \emph{edge-disjoint} if $E(S)\cap E(T)=\emptyset$. If $\ve$ is the last directed edge of a trail $T$, then we denote by $T - \ve$ the trail obtained by omitting the last edge and vertex of $T$. If adding a trail $T$ at the end of a trail $S$ again yields a trail, then it is denoted by $S \circ T$.  

An $x$--$y$~\emph{trail} is a trail with initial vertex $x$ and terminal vertex $y$, and an $x$--$\vf$~\emph{trail} is a trail with initial vertex $x$ and terminal edge $\vf$. For $\beta \in \{+,-\}$, an $x$--$(y,\beta)$~trail is an $x$--$y$~trail that arrives at $y$ with sign $\alpha$. Similarly, we define $\ve$--$y$, $\ve$--$\vf$, $\ve$--$(y,\beta)$~trails and $(x,\alpha)$--$y$, $(x, \alpha)$--$\vf$, $(x, \alpha)$--$(y,\beta)$~trails.

A bidirected graph $B$ is called \emph{rooted} if there is a designated root vertex that we denote by $r$. An $r$-\emph{trail} is a trail that starts in $r$ but with no internal vertex equal to $r$. So such a trail never comes back to $r$, except possibly at its final vertex. An $r$--$\ve$~\emph{trail} is an  $r$-trail whose terminal edge is $\ve $, while an $r$--$e$~\emph{trail} is an  $r$--$\ve$~trail for either orientation $\ve$ of $e$.
%Let $\lambda_B^{\textrm{trail}}(x,y)$ denote the maximal number of pairwise edge-disjoint $x$--$y$ trails in $B$.
A \emph{path} is a trail that does not repeat vertices.
%We write $\lambda_B^{\textrm{path}}(x,y)$ for the maximal number of $x$--$y$ paths.
An {\em almost path} is a trail that is either trivial or such that the removal of its last edge results in a path. In a rooted bidirected graph edge $e$ is \emph{(path/trail)-reachable} if there exists an $r$--$e$~\emph{(path/trail)}. A rooted bidirected graph is \emph{(path/trail)-reachable} if every edge is (path/trail)-reachable.

%   what should we use instead of uv and \vec{uv} how about e \in [uv]_B?, C_i vertex set or subgraph?
%  
\section{The edge-decomposition}\label{sec: edge decmp}

%\subsection{Plum-decomposition for trails}
%A directed edge $\ve = uv$ is a {\em stalk} of a directed graph $D$ rooted at $r$ if there are two edge-disjoint $r$-$u$-paths in $D$ and no $r$-$v$-path in $D-e$. If $\ve = uv$ is a stalk, the vertex $v$ is called a {\em seed}.  

%We call a digraph $D$ rooted at $r$ \emph{almost $2$-edge-connected} if every vertex $v \in V(D) \setminus \{r\}$ is either a seed or there are two edge-disjoint $r$-$v$-paths.

%\subsubsection{Refined}

An edge $e \in E(B)$ is {\em trail-undirectable} if $B$ contains both an $r$--$\ve$~trail and an $r$--$\ev$~trail. An edge $e \in E(B)$ is {\em trail-directable} if there is a unique orientation $\ve$ of $e$ such that $B$ contains an $r$--$\ve$~trail; we call this orientation $\ve$ the {\em natural orientation} of $e$. 

Let $B$ be a bidirected graph rooted at $r$.
A \emph{component} of a bidirected graph is maximal subgraph whose underlying undirected graph is connected. We say a component is \emph{nontrivial} if it contains at least one edge.
Let $C_1, \hdots, C_k$ be the nontrivial components of the subgraph of $B$ induced by the set of trail-undirectable edges, called the {\em trail-undirectable components} of $B$.
%For every edge $e$ in $F' = E(B) \setminus F$, there is exactly one orientation $\ve$ such that $B$ contains an $r$-$\ve$~trail; from now on we equip every edge of $F'$ with this natural orientation.

\begin{lemma}\label{lem:component_properties}
    Let $B$ be a bidirected graph rooted at $r$ and let $C_1, \hdots, C_k$ be its nontrivial trail-undirectable components. For every such component $C_i$ with $r \not \in V(C_i)$, the following holds: 
    \begin{enumerate}[label=(\alph*)]
        \item\label{itm:3.1i} there is exactly one trail-directable edge $f_i$ such that $\vf_i$ has head in $C_i$;
        \item\label{itm:3.1ii} every edge $e$ of $B$ with both endpoints in $C_i$ is trail-undirectable and there is an $\vf_i$--$\ve$~trail and an $\vf_i$--$\ev$~trail with all internal vertices in $C_i$; and
        \item\label{itm:3.1iii} for every vertex $w \in C_i$, there is an $\vf_i$--$(w,-)$~trail $T^{(w,-)}$ and an $\vf_i$--$(w,+)$~trail $T^{(w,+)}$ in $B$ with all edges other than $f_i$ in $E(C_i)$.
    \end{enumerate}
\end{lemma}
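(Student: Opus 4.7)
The plan is to prove (a), (b), and (c) in tandem, since the three statements are mutually supporting. Let me start with the existence part of (a). Pick any $e \in E(C_i)$ (possible as $C_i$ is nontrivial) and any $r$--$\ve$ trail $T$. Since $r \notin V(C_i)$ and edges of $E(C_i)$ have both endpoints in $V(C_i)$, the first edge of $T$ is not in $E(C_i)$, and since $T$ ends with $\ve \in E(C_i)$, a nonempty suffix of $T$ lies in $E(C_i)$. Let $\vf$ be the edge of $T$ immediately preceding the maximal suffix contained in $E(C_i)$. Then $f \notin E(C_i)$ and the head of $\vf$ (the initial vertex of the suffix) lies in $V(C_i)$. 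If $f$ were trail-undirectable, then, being incident via its head to an edge of $E(C_i)$, it would lie in the trail-undirectable component $C_i$, forcing $f \in E(C_i)$—a contradiction. So $f$ is trail-directable and, as $\vf$ is reached by an $r$-trail, $\vf$ is its natural orientation. Set $f_i := f$ provisionally.

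For (c), observe that the suffix of $T$ starting at $\vf_i$ is itself an $\vf_i$--$\ve$ trail using only edges of $\{f_i\} \cup E(C_i)$. Repeating the construction with every $e' \in E(C_i)$ and both orientations produces $\vf_i$--$\ve'$ trails within $\{f_i\} \cup E(C_i)$, provided the entry edge into $V(C_i)$ is always $f_i$. Granting this consistency, for each $w \in V(C_i)$ and each sign $\alpha$, I pick an edge $e' \in E(C_i)$ incident to $w$ with $\sigma(e', w) = \alpha$; the trail ending with $\ve'$ oriented toward $w$ then arrives at $w$ with the desired sign. The case where no such $e'$ exists at $w$ forces the existence of a further trail-directable edge into $w$ to supply the missing sign in the $r$-reachability of $w$, which would produce a second candidate entry edge and contradict uniqueness of $f_i$.

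For (b) and uniqueness in (a): suppose $e$ has both endpoints $u, v \in V(C_i)$ and $e \notin E(C_i)$. By (c), there are $\vf_i$--$(u, -\sigma(e, u))$ and $\vf_i$--$(v, -\sigma(e, v))$ trails in $\{f_i\} \cup E(C_i)$, both avoiding $e$. Prepending an $r$--$\vf_i$ trail whose non-$f_i$ edges lie outside $V(C_i)$ (available because uniqueness of $f_i$ prevents the prefix from entering $V(C_i)$) and deleting the duplicate $\vf_i$ yields $r$-trails avoiding $e$ that reach $u, v$ with the signs needed to extend by either orientation of $e$, certifying trail-undirectability. For uniqueness in (a), given another $f' \neq f_i$ with $\vf'$ having head in $V(C_i)$, part (b) forces the tail of $\vf'$ to lie outside $V(C_i)$; using (c), I traverse $C_i$ from $\vf_i$ to the head of $\vf'$ with sign opposite to $\sigma(f', \cdot)$ and then extend by the reverse orientation of $f'$, producing an $r$-trail ending in that reverse orientation—contradicting trail-directability of $f'$.

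The main obstacle is establishing the entry-edge consistency underlying (c), which is interlocked with uniqueness in (a) and with (b). The cleanest strategy I envision proves all three simultaneously: assuming two distinct candidate entry edges $f_1, f_2$, one amalgamates a suffix of one trail with a prefix of another across $V(C_i)$ to produce an $r$-trail ending in the reverse orientation of some $f_j$, contradicting its trail-directability. Bookkeeping the edge-disjointness constraint inside trails is the technical sticking point, and is ultimately why the proof must be run in parallel across the three parts rather than in sequence.
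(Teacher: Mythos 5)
Your outline correctly locates the existence half of \ref{itm:3.1i} (the entry edge of an $r$-trail into $V(C_i)$ is trail-directable; this is the paper's access trail $S$ and its last edge $\vf_i$), and it correctly observes that \ref{itm:3.1ii}, \ref{itm:3.1iii} and the uniqueness half of \ref{itm:3.1i} each follow easily from the others. But that observation is where the real proof has to begin, and your proposal stops there: your argument for \ref{itm:3.1iii} is conditioned throughout on ``entry-edge consistency'', which is equivalent to uniqueness in \ref{itm:3.1i}; your arguments for \ref{itm:3.1ii} and for uniqueness invoke \ref{itm:3.1iii}; and your proposed way out --- assume two distinct candidate entry edges and ``amalgamate a suffix of one trail with a prefix of another across $V(C_i)$'' --- already presupposes that one can cross $C_i$ from the head of one candidate to the head of the other, arriving with a prescribed sign, using only edges of $E(C_i)$. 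That crossing ability is exactly statement \ref{itm:3.1iii}. The circle is named but not broken.

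The paper breaks it with a construction for which your proposal has no substitute: fix one entry edge $\vf_i$ and let $F$ be a \emph{maximal} edge set such that every $e\in F$ is reachable in \emph{both} orientations from $\vf_i$ by trails confined to $F\cup\{f_i\}$. Sign-flexibility at every vertex of $V(F)$ then falls out of the definition of $F$ alone --- one does not need an edge of each sign at $w$ (indeed at the head of $\vf_i$ there may be none of the second sign); instead one truncates the trail that leaves $w$ along $\vh$ to arrive at $w$ with sign $-\sigma(h,w)$. The substantive work is then the exchange argument showing $V(F)=V(C_i)$: take a trail-undirectable edge $e$ joining $V(F)$ to $V(C_i)\setminus V(F)$, show every $r$--$\ve$~trail must contain $\vf_i$, locate its last departure from $V(F)$, and adjoin the terminal segment to $F$, contradicting maximality. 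This, together with the auxiliary claim that $S$ meets $V(F)$ only in its last vertex (which is what licenses the concatenations you defer as ``bookkeeping''), is the actual content of the lemma; without it, or an equivalent device for breaking the interdependence, the proposal does not constitute a proof.
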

\begin{proof}
    We know that $r \not \in V(C_i)$, $C_i$ spans at least one (trail-undirectable) edge $g$ by definition and there exists an $r$--$g$~trail. Thus we conclude that there is a nontrivial trail $S$ from $r$ to $C_i$ that first meets $C_i$ only at its last vertex. Let $\vf_i$ be the last edge of $S$. First we show that $f_i$ satisfies conditions \ref{itm:3.1ii} and \ref{itm:3.1iii}. Let $u$ and $v$ be the tail and head of $\vf_i$, respectively. Clearly, $f_i$ is trail-directable since otherwise we would have $u \in V(C_i)$.

    Let $F \subseteq E(B)$ be maximal with respect to the property that for every $e \in F$, there is an $\vf_i$--$\ve$~trail and an $\vf_i$--$\ev$~trail with all edges in $F \cup \{f_i\}$. 
    
    \begin{claim}\label{F-nonempty} $v\in V(F) $.  \end{claim} 
    \begin{claimproof} First we claim that there is a trail-undirectable edge $e$ incident with $v$ such that $\sigma(f_i, v) = -\sigma(e, v)$. Let $h \in E(C_i)$ be a trail-undirectable edge incident with $v$ (the existence of such an $h$ follows from $v \in V(C_i)$). If $\sigma(f_i, v) = -\sigma(h, v)$, then take $e = h$ and we are done, so assume $\sigma(f_i, v) = \sigma(h, v)$. Let $\vh$ be the orientation of $h$ where 
    $v$ is its tail. Since $h$ is trail-undirectable, there is an $r$--$\vh$~trail $T$ in $B$. Let $\vg$ be the edge just before $h$ in $T$. Since $\sigma(f_i, v) = \sigma(h, v)$, the trail $S \circ \gv$ is an $r$--$\gv$~trail in $B$. Therefore, $g$ is trail-undirectable, incident with $f_i$ at $v$, and $\sigma(f_i, v) = -\sigma(g, v)$, and we take $e = g$.

    Let $\ev$ be the orientation of $e$ with head $v$. Since $e$ is trail-undirectable, there is an $r$--$\ev$~trail $T$ in $B$.  If $\vf_i \not \in T$ then $T \circ \fv_i$ is an $r$--$\fv_i$~trail in $B$, contradicting that $\vf_i$ is trail-directable, so $\vf_i \in T$. Let $\hat{T}$ be the terminal segment of $T$ starting after $\vf_i$. Now $\vf_i \circ \hat{T}$ and $\vf_i \circ \hat{T}^{-1}$ are both trails, witnessing that $E(\hat{T}) \subseteq F$. This proves \cref{F-nonempty}.
    \end{claimproof}

    \begin{claim}\label{v-reachable}
        For every $w\in V(F)$  there exist an $\vf_i$--$(w,-)$~trail $T^{(w,-)}$ and an $\vf_i$--$(w,+)$~trail $T^{(w,+)}$ with all edges in $F \cup \{f_i\}$.
    \end{claim}
    \begin{claimproof}
        Pick an $h\in F$ that is incident with  $w$, let $\vh$ be the orientation of $h$ where $w$ is its tail and let $\alpha:= \sigma(h,w)$. Since $h \in F$, there exist an $\vf_i$--$\vh$~trail $T$ and an $\vf_i$--$\hv$~trail $T'$ whose edges are contained in $F \cup \{f_i\}$. Then the trail $T^{(w, \alpha)} := T'$ ends in $(w, \alpha)$ and has all edges in $F \cup \{f_i\}$. Furthermore, the trail $T^{(w, -\alpha)}$ obtained from $T$ by removing $h$ ends in $(w, - \alpha)$ and has all edges in $F \cup \{f_i\}$.
    \end{claimproof}

\begin{claim}\label{V(F) and S}
   $S$ meets $V(F)$ only at its last vertex $v$.  
\end{claim}
\begin{claimproof}
Note that $S$ meets $v$ only once by its definition. Let $w$ be the first vertex on $S$ that is in $V(F)$  and let $S'$ be the initial segment of $S$ until the first appearance of $w$ in $S$. Suppose for a contradiction that $w\neq v$. Then $\vf_i \notin S'$ and thus for a suitable sign $\alpha$, 
$S'\circ (T^{(w, \alpha)})^{-1}$ is an $r$--$\fv_i$~trail which contradicts the directabality of $f_i$. 
\end{claimproof}
\begin{claim}\label{VF=Ci}
   $V(F)=V(C_i)$.  
\end{claim}
\begin{claimproof}
It follows from \cref{V(F) and S}  that  whenever $T$ is a trail starting with $\vf_i$ and with all edges in $F \cup \{f_i\}$, then $(S - \vf_i) \circ T$ is also a trail.  By combining this with the definition of $F$ we conclude that $F$ consists of trail-undirectable edges belonging to the same trail-undirectable component. Since $v \in V(F)$, this means $F \subseteq E(C_i)$ and hence $V(F)\subseteq V(C_i)$.    Suppose for a contradiction that $V(C_i) \setminus V(F)\neq \emptyset$.  Since $C_i$ is a component of trail-undirectable edges, and $v\in  V(F)$, we can choose an  $x\in V(C_i) \setminus V(F)$ that is incident with a trail-undirectable edge $e$ whose other endpoint $y$ is in $V(F)$. Let $\ve$ be the orientation of $e$ with tail $x$ and head $y$. Since $e$ is trail-undirectable, $B$ contains an $r$--$\ve$~trail $R$. 

We claim that $\vf_i \in R$. Suppose $\vf_i \notin R$ and let $R'$ be the initial segment of $R$ until its first vertex $z$ in $V(F)$. Then for a suitable sign $\alpha$, $R' \circ (T^{(z, \alpha)})^{-1}$ is an $r$--$\fv_i$~trail, contradicting the fact that $f_i$ is trail-undirectable. This contradiction shows that $\vf_i \in R$. 

Let $w$ be the last vertex of $R-\ve$ that is in $V(F)$. Since $R-\ve $ goes through $v \in V(F)$, the vertex $w$ is well defined. Let $T$ be the terminal segment of $R$ starting at the last appearance of $w$ in it. Then $e$ witnesses $F \subsetneq F \cup E(T)$. Therefore, in order to get a contradiction to the maximality of $F$, it is enough to show that for every $g \in E(T)$, there is an $\vf_i$--$\vg$~trail and an $\vf_i$--$\gv$~trail with all edges in $F \cup E(T) $. For a suitable sign $\alpha$, $T^{(w,\alpha)}\circ T$ is a trail (see \cref{v-reachable}). For suitable sign $\beta$, $T^{(y,\beta)}\circ T^{-1}$ is a trail. These two trails together witness the desired property.
    %
    %Then in particular the edges in $F$ form more than one weak component on $C_i$.  edges there is a trail-undirectable $e \in E(B[C_i])\setminus F$.Let $\ve = xy$ and let $\alpha = \sigma(e, x)$.     We have $x\neq v$, since otherwise the edges of $\vf_iR-\vf_i$ must belong to $F$ contradicting  $e\notin F$.   Let $\beta$ be the sign of $wRx$ at $w$. If $wRx$ shares a vertex $z$ with $S$, then either $Sz \circ zRw \circ T^{(w, -\beta)}$ or $Sz \circ zRx \circ T^{(x, -\alpha)}$ is an $r$--$\fv_i$~trail, a contradiction. Now, $S \circ T^{(w, -\beta)} \circ wRx$ and $S \circ T^{(x, -\alpha)} \circ xRw$ are trails that witness that every edge of $wRx$ is trail-undirectable, and thus that $E(wRx) \subseteq C_i$. Finally, $T^{(w, -\beta)} \circ wRx$ and $T^{(x, -\alpha)} \circ xRw$ witness that $F \cup E(wRx)$ satisfies the conditions for $F$, contradicting that $F$ is maximal. This completes the proof that $E(B[C_i]) \subseteq F$. 
\end{claimproof}

\begin{claim}
    $F$ is the set of edges with both endpoints in $C_i$. 
\end{claim}
\begin{claimproof}
It is clear from \cref{VF=Ci} that every edge in $F$ has both endpoints in $C_i$. Let $e$ be any edge with both endpoints in $C_i$. Let $\ve$ be an orientation of $e$ and let $x$ be the tail and $y$ the head of $\ve$. Let $x$ and $y$ be the endpoints of~$e$. Suppose for a contradiction that $e \not \in F $. Because of \cref{VF=Ci} we can apply \cref{v-reachable} with $x$ as well as with $y$ and construct the trails $T^{(x, -\sigma(e,x))} \circ \ve $ and $T^{(y, -\sigma(e,y))} \circ \ev$. But then $F \cup \{ e\}$ satisfies the conditions on $F$, contradicting the maximality of $F$. 
\end{claimproof}

  The proof of \ref{itm:3.1ii} and \ref{itm:3.1iii} is complete. It remains to show the uniqueness part of \ref{itm:3.1i}, i.e.\ that $f_i$ is the unique trail-directable edge such that $\vf_i$ has head in $C_i$. Suppose for contradiction that there exists a trail-directable edge $f_i' \neq f_i$ such that $\vec{f}_i'$  has head $x$ in $C_i$. Then $(S - \vec{f}_i) \circ T^{(x, -\sigma(f_i',x))} \cev{f}_i'$ is a $r$--$\cev{f}_i'$~trail, contradicting that $f'_i$ is trail-directable.
\end{proof}

\begin{corollary}
\label{lem:component_properties_root}
    Let $B$ be a bidirected graph rooted at $r$ and let $C$ be a nontrivial trail-undirectable component containing $r$. Then the following holds: 
    \begin{enumerate}[label=(\alph*)]
        \item\label{itm:3.2i} there is no trail-directable edge $f$ such that its natural orientation $\vf$ has head in $C$;
        \item\label{itm:3.2ii} for every $e \in E(B[C])$, there is an $r$--$\ve$~trail and an $r$--$\ev$~trail with all internal vertices in $C$; and
        \item\label{itm:3.2iii} for every vertex $w \in C \setminus \{r\}$, there is an $r$--$(w,-)$~trail $R^{(w,-)}$ and an $r$--$(w,+)$~trail $R^{(w,+)}$ in $B$ with all edges in $E(C)$.
    \end{enumerate}
\end{corollary}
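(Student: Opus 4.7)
My plan is to adapt the proof of \cref{lem:component_properties} with $r$ itself playing the role of the auxiliary edge $\vf_i$. I will prove (b) and (c) together via a maximality argument, then deduce (a) from (c).

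Define $F \subseteq E(B)$ to be maximal with the property that for every $e \in F$ there is an $r$--$\ve$~trail and an $r$--$\ev$~trail using only edges in $F$; the maximum exists because the union of any two such sets is again such a set. To see $F \neq \emptyset$, pick a trail-undirectable edge $e_0 \in E(C)$ incident with $r$ (which exists because $r \in V(C)$ and $C$ is nontrivial) and an $r$--$\ev_0$~trail $R_0$ guaranteed by trail-undirectability of $e_0$. Then $E(R_0)$ already satisfies the defining property: for each $g \in E(R_0)$, the initial sub-trail of $R_0$ up through the appearance of $g$ yields one orientation of $g$, while reversing the terminal segment of $R_0$ after $g$ and appending the other orientation of $g$ yields the other orientation, with all edges in $E(R_0)$. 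Hence $F \supseteq E(R_0) \ni e_0$. Every $e \in F$ is trail-undirectable and linked to $r$ by a trail of trail-undirectable edges, so $F \subseteq E(C)$.

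The main step is to show $V(F) = V(C)$. Suppose for contradiction $V(F) \subsetneq V(C)$; connectedness of $C$ and $r \in V(F)$ give an edge $e \in E(C)$ with endpoints $x \in V(C) \setminus V(F)$ and $y \in V(F)$. Orient $\ve$ from $x$ to $y$, let $R$ be an $r$--$\ve$~trail, let $w$ be the last vertex of $R - \ve$ in $V(F)$ (which exists since $r \in V(F)$), and let $T$ be the terminal segment of $R$ from the last occurrence of $w$. The edges of $T$ are disjoint from $F$ since each has at least one endpoint outside $V(F)$. Mimicking \cref{VF=Ci}, for every $g \in E(T)$ I construct $r$-trails ending with $\vg$ and $\gv$ using only edges in $F \cup E(T)$ by extracting sub-trails from two master $r$-trails: $T^{(w,\alpha)} \circ T$ for forward orientations and $T^{(y,\beta)} \circ T^{-1}$ for reverse orientations, where the trails $T^{(\cdot,\cdot)}$ are built from $F$-edges as in \cref{v-reachable} and $\alpha, \beta$ are chosen so signs agree at the junctions. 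This shows $F \cup E(T)$ satisfies the defining property while $e \in E(T) \setminus F$, contradicting maximality.

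The main obstacle is the cases $w = r$ or $y = r$, where the trails $T^{(r, \pm)}$ from \cref{v-reachable} may degenerate to the trivial trail (removing the last edge of the single-edge trail $\vh$ leaves no edges). I handle these directly: when $w = r$, the segment $T$ itself begins at $r$ and is an $r$-trail (its internal vertices, being internal to $R$, avoid $r$), so $T$ replaces $T^{(w,\alpha)} \circ T$ as the forward master; symmetrically, when $y = r$, the reverse $T^{-1}$ begins at $r$ and is an $r$-trail, replacing $T^{(y,\beta)} \circ T^{-1}$. Once $V(F) = V(C)$ is established, the same extension argument applied to any $e \in E(C) \setminus F$ (with both endpoints in $V(F)$, so $T$ is just $\ve$) forces $F = E(C)$. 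Part (c) then follows by running the construction of \cref{v-reachable} for each $w \in V(C) \setminus \{r\}$.

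For (a), suppose $f$ is trail-directable with $\vf$ having head $x \in V(C)$. If $x = r$, then $\fv$ has tail $r$ and the single-edge trail $r \xrightarrow{\fv} u$ (with $u$ the other endpoint of $f$) is an $r$--$\fv$~trail, contradicting trail-directability of $f$. If $x \neq r$, (c) supplies an $r$--$(x, -\sigma(f,x))$~trail $R$ with edges in $E(C)$; since $f$ is trail-directable we have $f \notin E(C)$, so $R \circ \fv$ is an $r$--$\fv$~trail, again a contradiction. Finally, (b) follows from (a) together with $F = E(C)$: any $e \in E(B[C])$ has both endpoints in $V(C)$, so (a) forces $e$ to be trail-undirectable, hence $e \in E(C) = F$; the trails witnessing $e \in F$ use only $E(C)$-edges, so all their internal vertices lie in $V(C)$.
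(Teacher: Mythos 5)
Your proposal is essentially correct, but it takes a different route from the paper. The paper proves this corollary by a short reduction: after normalising all signs at $r$ to $+$, it attaches a new root $\hat r$ by an edge $\hat r r$ with sign $-$ at $r$, observes that $r$-trails of $B$ correspond exactly to $\hat r$-trails of the enlarged graph, and then simply invokes \cref{lem:component_properties} for $C$, which no longer contains the root. You instead rerun the whole maximality argument of \cref{lem:component_properties} with the vertex $r$ playing the role of the entry edge $\vf_i$. Your version is longer but self-contained, and it correctly isolates the genuinely new degenerate cases ($w=r$ and $y=r$), which is exactly where a naive copy of that argument would break; the paper's version buys brevity at the cost of an auxiliary construction and a translation of trails back and forth.

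Two points need repair. First, in the proof that $F\neq\emptyset$ you must take $\ev_0$ to be the orientation of $e_0$ with head $r$, so that $R_0$ is a closed $r$--$r$~trail; otherwise ``reversing the terminal segment of $R_0$ after $g$'' produces a trail starting at the far endpoint of $R_0$ rather than at $r$, and it is not an $r$-trail. Second, your derivation of (b) from (a) has a gap: statement (a) only excludes that an edge $e\in E(B[C])$ is trail-directable, but it does not exclude that $e$ is not trail-reachable at all, in which case $e\notin E(C)$ and your chain ``$e\in E(C)=F$'' breaks. The fix is already contained in your own machinery: run the final extension step for \emph{every} edge of $B$ with both endpoints in $V(C)=V(F)$, not only for $e\in E(C)\setminus F$. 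The trails $T^{(x,-\sigma(e,x))}\circ\ve$ and $T^{(y,-\sigma(e,y))}\circ\ev$ (with the single-edge trail substituted when an endpoint equals $r$) show that any such $e$ can be added to $F$, hence already lies in $F$ by maximality. This yields $F=E(B[C])$ and gives (b) directly, without routing through (a).
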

\begin{proof}
    We can assume without loss of generality that all edges incident with $r$ have sign $+$ at $r$ since switching signs at $r$ does not affect what counts as an $r$-trail.
    Let $\Tilde{B}$ be the bidirected graph obtained from $B$ by adding a new vertex $\hat{r} $ and an edge $\hat{r} r$ with sign $-$ at $r$ and let $\hat{r}$ be the root of $\Tilde{B}$.
    Every $r$-trail in $B$ turns into an $\hat{r}$-trail in $\Tilde{B}$ by adding the edge $\hat{r} r$.
    Conversely, every $\hat{r}$-trail in $\Tilde{B}$ turns into an $r$-trail in $B$ by removing the edge $\hat{r} r$.
    Thus the trail-undirectable edges of $\Tilde{B}$ are precisely the trail-undirectable edges of $B$ and the orientations of the trail-directable edges coincide in $B$ and $\Tilde{B}$.
    In particular, $C$ is a trail-undirectable component of $\Tilde{B}$.

    Since $\hat{r} \notin V(C)$, we can apply~\cref{lem:component_properties} to $C$ in $\Tilde{B}$.
    Then $\hat{r} r$ is the unique trail-directable edge of $\Tilde{B}$ with head in $V(C)$, which implies that there is no trail-directable edge of $B$ with head in $V(C)$.
    Furthermore, by removing the edge $\hat{r} r$ from the trails provided by~\cref{lem:component_properties} we obtain the desired trails in $B$.
    \end{proof}

For the end of this subsection let us further assume that $B$ is trail-reachable. We note that this assumption does not restrict results about connectivity from $r$: Every rooted bidirected graph becomes trail-reachable by deleting all non-trail-reachable edges. Furthermore, deleting all non-trail-reachable edges does not affect what counts as an $r$-trail, moreover, trail-reachable edges remain trail-reachable after the deletions.

Let $C_i$ and $f_i$ be as in~\cref{lem:component_properties}. We write $\vf_i$ for the natural orientation of $f_i$ and let $c_i$ be the head of $\vf_i$. Furthermore, for any component $C_i$ containing $r$ we set $c_i:= r$ if $V(C_i)$ contains $r$. A vertex $x \in V(B)$ is \emph{trail-solid} if $x \notin \bigcup_{i \in [k]} V(C_i) - c_i$.

 Let $\hat{B}$ be the bidirected graph formed by contracting the component $C_i$ onto the vertex~$c_i$ for $i = 1, \hdots, k$.  %Let $\hat{r} = r$ if $r \not \in C_i$ or $\hat{r}$ is the vertex formed by contracting $C_i$ if $r \in C_i$. 
Note that $V(\hat{B})$ is exactly the set of trail-solid vertices of $B$ and $E(\hat{B})$  corresponds to the set of trail-directable edges of $B$. (However, we stress that $\hat{B}$ is not a substructure of $B$ in the intuitive sense. Specifically, two edges $e$ and $f$ that are incident in $\hat{B}$ at vertex $c_i$ for $i = 1, \hdots, k$ need not be incident in $B$.)
Now let $\trailskeleton{B}$ be the rooted directed graph formed from $\hat{B}$ by giving every edge its natural orientation. Observe that by \cref{lem:component_properties}, the vertex $c_i$ has in-degree one for all $i = 1, \hdots, k$ and the unique in-edge incident with $c_i$ is $f_i$. We call $\trailskeleton{B}$ the {\em trail-skeleton} of $B$. See~\cref{fig:edge-decomp} for an example.
%Observe also that $V(D) \cap V(B) = V(B) \setminus \left(\bigcup_{i = 1}^k V(C_i)\right)$ and that $V(D) \setminus V(B) = \{c_1, \hdots, c_k\}$.

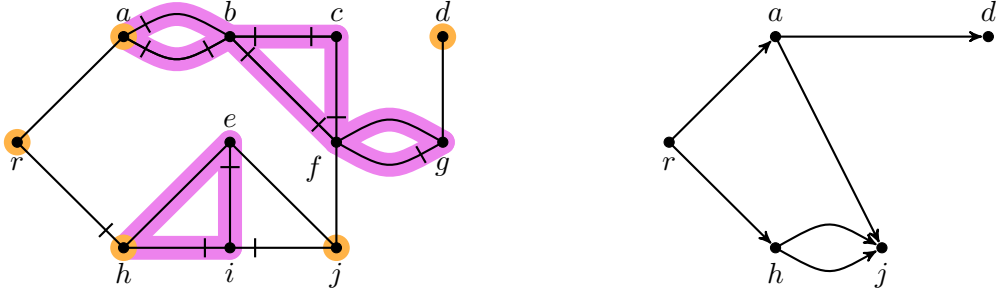
\begin{figure}
\centering
\begin{subfigure}[b]{0.54\textwidth}
\centering
\begin{tikzpicture}
\coordinate (r) at (0,1.4);
\coordinate (a) at (1.4,2.8);
\coordinate (b) at (2.8,2.8);
\coordinate (c) at (4.2,2.8);
\coordinate (d) at (5.6,2.8);
\coordinate (e) at (2.8,1.4);
\coordinate (f) at (4.2,1.4);
\coordinate (g) at (5.6,1.4);
\coordinate (h) at (1.4,0);
\coordinate (i) at (2.8,0);
\coordinate (j) at (4.2,0);

%magenta highlight
\foreach \i/\j in {h/e,e/i,h/i,b/c,c/f,b/f}{
    \draw[line width=9pt, LavenderMagenta, line cap=round] (\i) to (\j);
}

\foreach \i/\j in {a/b,f/g}{
    \draw [line width=9pt, LavenderMagenta, line cap=round] (\i) ..controls +($-0.5*(\i)+0.5*(\j)+(0,+0.4)$) .. (\j);
    \draw [line width=9pt, LavenderMagenta, line cap=round] (\i) ..controls +($-0.5*(\i)+0.5*(\j)+(0,-0.4)$) .. (\j);
}
%orange highlight
\foreach \i in {r,a,d,h,j}{
    \fill[PastelOrange] (\i) circle (5pt);
}

%vertices with labels
\foreach \i in {a,b,c,d,e}{
    \node[dot] at (\i) [label=above:$\i$] {};
}
\foreach \i in {f}{
    \node[dot] at (\i) [label=below left:$\i$] {};
}
\foreach \i in {h,i,j,g,r}{
    \node[dot] at (\i) [label=below:$\i$] {};
}
%edges
\foreach \i/\j in {r/a, h/e, e/j, f/j, d/g}{
    \draw[thick] (\i) to (\j);
}
\foreach \i/\j in {r/h, h/i, i/e, j/i, c/f, b/c, c/b, b/f, f/b}{
    \draw[-e+8,thick] (\i) to (\j);
}
\foreach \i/\j in {a/b, b/a, f/g}{
     \draw [-e+8,thick] (\i) ..controls +($-0.5*(\i)+0.5*(\j)+(0,-0.4)$) .. (\j);
}
\foreach \i/\j in {b/a}{
    \draw [-e+8,thick] (\i) ..controls +($-0.5*(\i)+0.5*(\j)+(0,+0.4)$) .. (\j);
}
\foreach \i/\j in {f/g}{
    \draw [thick] (\i) ..controls +($-0.5*(\i)+0.5*(\j)+(0,+0.4)$) .. (\j);
}

\end{tikzpicture}
\caption{A bidirected graph $B$ rooted in $r$. The trail-solid vertices of $B$ are marked orange and the trail-undirectable edges of $B$ are marked purple.}
\end{subfigure}
\hfill
\begin{subfigure}[b]{0.45\textwidth}
\centering
\begin{tikzpicture}
\coordinate (r) at (0,1.4);
\coordinate (a) at (1.4,2.8);
\coordinate (d) at (4.2,2.8);
\coordinate (h) at (1.4,0);
\coordinate (j) at (2.8,0);

%vertices with labels
\foreach \i in {a,d}{
    \node[dot] at (\i) [label=above:$\i$] {};
}
\foreach \i in {r,h,j}{
    \node[dot] at (\i) [label=below:$\i$] {};
}
%edges
\foreach \i/\j in {r/a,r/h,a/d,a/j}{
    \draw[arc,thick] (\i) to (\j);
}
\draw [arc,thick] (h) ..controls +($-0.5*(h)+0.5*(j)+(0,+0.4)$) .. (j);
\draw [arc,thick] (h) ..controls +($-0.5*(h)+0.5*(j)+(0,-0.4)$) .. (j);
\end{tikzpicture}
\caption{The trail-skeleton of $B$. \newline \newline}
\end{subfigure}
    \caption{A rooted bidirected graph $B$ and its trail-skeleton.}
    \label{fig:edge-decomp}
\end{figure}

\begin{lemma}\label{lem:vertexincomponent}
    A vertex $v$ is contained in a trail-undirectable component if and only if there is an $r$-$(v,\alpha)$~trail for every $\alpha \in \{+,-\}$.
\end{lemma}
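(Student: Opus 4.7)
The plan is to prove the two implications separately. For the forward direction, suppose $v \in V(C_i)$ for some trail-undirectable component $C_i$. If $r \notin V(C_i)$, \cref{lem:component_properties} supplies the unique trail-directable edge $f_i$ with natural orientation $\vec{f}_i$ into $C_i$ together with $\vec{f}_i$-$(v,\alpha)$ trails $T^{(v,\alpha)}$ for both $\alpha \in \{+,-\}$, whose edges lie in $E(C_i) \cup \{f_i\}$. The trail $S$ constructed in the proof of that lemma is an $r$-trail ending with $\vec{f}_i$ that meets $V(C_i)$ only at its last vertex, so the edges of $S - \vec{f}_i$ avoid $E(C_i) \cup \{f_i\}$. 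The concatenation $(S - \vec{f}_i) \circ T^{(v,\alpha)}$ is then a valid $r$-$(v,\alpha)$ trail, with sign compatibility at the junction inherited from $S$, no edge repetitions, and no internal vertex equal to $r$ (since all vertices after the junction lie in $V(C_i) \cup \{\text{tail of } \vec{f}_i\}$, none of which is $r$). When $r \in V(C_i)$ and $v \neq r$, \cref{lem:component_properties_root} directly yields the required trails.

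For the reverse direction, assume both $r$-$(v,+)$ and $r$-$(v,-)$ trails exist and choose such a pair $(T^+, T^-)$ minimising $|E(T^+)| + |E(T^-)|$. Let $e^+, e^-$ be their last edges; these are distinct because they have opposite signs at $v$. The goal is to show that $e^+$ or $e^-$ is trail-undirectable, which places $v$ in a trail-undirectable component. If $e^+ \notin E(T^-)$, appending to $T^-$ the orientation of $e^+$ with tail $v$ produces an $r$-trail ending with this reversed orientation of $e^+$: the sign condition at $v$ is met, no edge repeats, and (provided $v \neq r$) no new internal $r$ is introduced. Hence $e^+$ is trail-undirectable. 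A symmetric argument handles $e^- \notin E(T^+)$.

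The remaining case assumes $e^- \in E(T^+)$ and $e^+ \in E(T^-)$. Examine the orientation in which $T^+$ traverses $e^-$. If $T^+$ uses $e^-$ with tail at $v$, then the prefix $P$ of $T^+$ ending at $v$ immediately before this traversal is an $r$-$(v,+)$ trail disjoint from $e^-$; appending the same orientation of $e^-$ produces an $r$-trail witnessing that $e^-$ is trail-undirectable. Otherwise $T^+$ traverses $e^-$ with head at $v$, and the prefix of $T^+$ ending with this traversal is an $r$-$(v,-)$ trail strictly shorter than $T^+$. A symmetric analysis on $T^-$'s traversal of $e^+$ either finishes the argument or yields an $r$-$(v,+)$ trail $P'$ strictly shorter than $T^-$. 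If the shorter-trail outcome occurs on both sides, pairing each shorter trail with the untouched one and chaining the resulting length inequalities forces $|T^-| \leq |P| < |T^+| \leq |P'| < |T^-|$, contradicting minimality.

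The main technical hurdle is this minimality-plus-case analysis for the reverse direction, because $T^+$ and $T^-$ can share their final edges, and the right prefix must be extracted to exhibit trail-undirectability. A minor subtlety is the case $v = r$, where the extensions above would turn $r$ into an internal vertex; this case needs to be addressed separately or noted as excluded from the statement's intended use.
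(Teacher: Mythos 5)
Your proof is correct and follows essentially the same strategy as the paper's: an extremal choice of trails followed by appending a reversed terminal edge to exhibit a trail-undirectable edge incident with $v$. The paper streamlines both directions --- the forward one by simply noting that $v$ is incident with some trail-undirectable edge $e$, whose two defining $r$--$\ve$ and $r$--$\ev$~trails already contain the required $r$--$(v,+)$ and $r$--$(v,-)$~trails, and the backward one by taking a single shortest $r$--$(v,+)$~trail, which collapses your case analysis on the minimal pair to two cases --- and, like yours, it leaves the degenerate case $v=r$ implicit.
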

\begin{proof}
    The vertex $v$ is contained in a trail-undirectable component if and only if $v$ is incident with an undirectable edge $e$. If $v$ is incident with an undirectable edge $e$, there is an $r$--$\ve$~trail and an $r$--$\ev$~trail, which contain the desired trails.
    
    Now, we assume that there is an $r$--$(v,\alpha)$~trail for every $\alpha \in \{+,-\}$. Let $T$ be the shortest $r$--$(v,+)$~trail. If $T$ contains an $r$--$(v,-)$~trail $T'$, let $\vf$ be the terminal edge of $T$. Then the trails $T$ and $T' \circ \fv$ witness that $f$ is undirectable. Otherwise, $v$ is not an internal vertex of $T$. We pick an $r$--$(v,-)$~trail $T''$ and let $\vf$ be the terminal edge of $T''$. Then the trails $T \circ \vf$ and $T''$ witness that $f$ is undirectable.
\end{proof}

\begin{lemma}\label{cor:trail-solid}
    A vertex $v$ is trail-solid if and only if one of the following holds
    \begin{itemize}
        \item $v = r$;
        \item there is a trail-directable edge $e$ such that $\ve$ has head $v$; or 
        \item $v$ is not incident with an undirectable edge.
    \end{itemize}
\end{lemma}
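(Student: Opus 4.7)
The plan is a direct case analysis, unpacking the definition of trail-solid and invoking \cref{lem:component_properties} together with \cref{lem:component_properties_root} for each direction. Recall that $v$ is trail-solid exactly when $v \notin V(C_i) \setminus \{c_i\}$ for all nontrivial trail-undirectable components $C_1, \dots, C_k$, where $c_i = r$ if $r \in V(C_i)$ and otherwise $c_i$ is the head of the natural orientation $\vf_i$ of the distinguished directable edge $f_i$ coming from \ref{itm:3.1i}.

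For the forward direction, I would argue contrapositively. Suppose $v$ fails all three bullets, so $v \neq r$, no trail-directable edge has natural orientation with head $v$, and $v$ is incident with a trail-undirectable edge $e$. Then $v$ belongs to some nontrivial trail-undirectable component $C_i$. It remains to show $v \neq c_i$. If $r \in V(C_i)$, then $c_i = r \neq v$ by assumption. If $r \notin V(C_i)$, then by \ref{itm:3.1i} the unique trail-directable edge whose natural orientation has head in $V(C_i)$ is $f_i$, and its head is $c_i$; since no trail-directable edge has natural head at $v$, we conclude $v \neq c_i$. Either way $v \in V(C_i) \setminus \{c_i\}$, so $v$ is not trail-solid.

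For the converse, assume one of the three bullets holds and check $v \notin V(C_i) \setminus \{c_i\}$ for every $i$. If $v = r$ and $v \in V(C_i)$, then by definition $c_i = r = v$, so $v = c_i$. If some trail-directable edge $e$ has $\ve$ with head $v$ and $v \in V(C_i)$, then in case $r \in V(C_i)$ we contradict \ref{itm:3.2i}, while in case $r \notin V(C_i)$ the uniqueness part of \ref{itm:3.1i} forces $e = f_i$ and hence $v = c_i$. Finally, if $v$ is incident with no undirectable edge, then $v$ lies in no nontrivial trail-undirectable component at all, so trivially $v \notin V(C_i) \setminus \{c_i\}$.

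The only subtle point, and the part I would write most carefully, is the handling of the two cases $r \in V(C_i)$ versus $r \notin V(C_i)$ when translating between the second bullet and the uniqueness clauses of \cref{lem:component_properties}\ref{itm:3.1i} and \cref{lem:component_properties_root}\ref{itm:3.2i}; everything else is a mechanical unfolding of the definitions, so no serious obstacle is expected.
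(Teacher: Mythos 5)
Your proposal is correct and follows essentially the same route as the paper: the paper's proof is exactly this case analysis, splitting on whether $v$ lies in some $\bigcup_i V(C_i)$ and then characterising $w=c_i$ via $w=r$ or $\vf_i$ having head $w$, using \cref{lem:component_properties}\ref{itm:3.1i} and \cref{lem:component_properties_root}\ref{itm:3.2i}. Your version is just a more explicit (contrapositive) write-up of the same argument, with the $r\in V(C_i)$ versus $r\notin V(C_i)$ distinction handled exactly as in the paper.
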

\begin{proof}
    A vertex $v \in V(B) \setminus \bigcup_{i \in [k]} V(C_i)$ is trail-solid and not incident with an undirectable edge.
    A vertex $w \in \bigcup_{i \in [k]} V(C_i)$ is trail-solid if and only if $w = c_i$ for some $i \in [k]$. Furthermore for $i \in [k]$ we have $w = c_i$ if and only if $w = r$ or $\vf_i$ has head in $c_i$, by~\cref{lem:component_properties,lem:component_properties_root}.
\end{proof}

\subsection{Transferring trails between the bidirected graph and its trail-skeleton}
Next we explain the correspondence between trails in $B$ and in its trail-skeleton. Given an $r$-trail $T$ in $B$, we define $\mathbb{o}(T)$ to be the unique $r$-trail in $\trailskeleton{B}$ which follows the trail-directable edges of $T$ in their order of appearance along $T$. Of course, we need to show that there really is such a trail.
% Given some trail $T:= v_0 \ve_1 v_1 \ve_2 v_2 \dots v_\ell$ in $B$, let $\mathbb{E}(T)$ be the sequence of oriented edges in $T$, i.e.\ $\ve_1, \ve_2, \dots, \ve_\ell$, and \blue{let $\mathbb{o}(T)$ be the subsequence of $\mathbb{E}(T)$ that contains exactly trail-directable edges in $T$.}

\begin{lemma}\label{lem:bijection_trail_to_trail_skeleton1}
    Let $B$ be a trail-reachable bidirected graph rooted at $r$. Let $T$ be an $r$-trail of $B$. Then there is an $r$-trail $\mathbb{o}(T)$ in $\trailskeleton{B}$ as described above.
\end{lemma}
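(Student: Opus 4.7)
The plan is to exhibit $\mathbb{o}(T)$ explicitly and then verify the three properties needed for it to be an $r$-trail in the directed graph $\trailskeleton{B}$: that each listed edge appears in its natural orientation, that consecutive listed edges share a vertex after contraction, and that the resulting walk starts at $r$, has no repeated edges and no internal vertex equal to $r$. Concretely, let $\ve_1,\ldots,\ve_m$ be the trail-directable edges of $T$ listed in order of appearance, each carrying the orientation in which $T$ traverses it; the candidate for $\mathbb{o}(T)$ alternates the appropriate vertices of $\trailskeleton{B}$ with the $\ve_j$'s, starting at $r$.

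For each $j$, the initial segment of $T$ ending with $\ve_j$ is an $r$--$\ve_j$~trail in $B$. Since $e_j$ is trail-directable, its natural orientation is the unique one admitting an $r$-trail ending in it, so $\ve_j$ must be the natural orientation; hence each $\ve_j$ genuinely corresponds to an oriented edge of $\trailskeleton{B}$. Non-repetition of edges in $\mathbb{o}(T)$ is inherited immediately from $T$ being a trail.

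The main obstacle, and the heart of the argument, is checking the adjacency in $\trailskeleton{B}$: that the head of $\ve_j$ in $\trailskeleton{B}$ agrees with the tail of $\ve_{j+1}$ in $\trailskeleton{B}$ for $1\le j<m$, and that the tail of $\ve_1$ in $\trailskeleton{B}$ equals $r$. For the first statement, consider the subtrail $U_j$ of $T$ strictly between $\ve_j$ and $\ve_{j+1}$. If $U_j$ is trivial, the head of $\ve_j$ and the tail of $\ve_{j+1}$ coincide in $B$, and hence coincide in $\trailskeleton{B}$ (whether that shared vertex is trail-solid, in which case it is preserved, or lies in some $C_i$, in which case it is contracted to $c_i$). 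Otherwise every edge of $U_j$ is trail-undirectable, and all such edges share vertices along $U_j$, so they all belong to a single trail-undirectable component $C_i$. Both endpoints of $U_j$ therefore lie in $V(C_i)$ and are contracted to $c_i$. The same argument applied to the prefix of $T$ preceding $\ve_1$, together with the convention $c_i=r$ whenever $r\in V(C_i)$, gives that the tail of $\ve_1$ in $\trailskeleton{B}$ is $r$.

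Finally, to show that no internal vertex of $\mathbb{o}(T)$ equals $r$, I would split into two cases. If $r$ is contained in a nontrivial trail-undirectable component, then by~\cref{lem:component_properties_root}\ref{itm:3.2i} no edge of $\trailskeleton{B}$ has head $r$, so $r$ cannot occur as an internal vertex of any directed walk. Otherwise $r$ is trail-solid and lies in no $C_i$, so the only preimage of $r$ under the contraction is $\{r\}$; any internal occurrence of $r$ in $\mathbb{o}(T)$ would force an internal vertex of $T$ to equal $r$, contradicting that $T$ is an $r$-trail. Uniqueness of $\mathbb{o}(T)$ as described in the statement follows automatically from the construction.
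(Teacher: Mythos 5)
Your proposal is correct and follows essentially the same approach as the paper: decompose $T$ into maximal runs of trail-directable edges separated by (possibly trivial) segments of trail-undirectable edges, observe that each such segment lies in a single trail-undirectable component and hence contracts to one vertex of $\trailskeleton{B}$, and check the endpoint conditions. Your write-up is in fact more detailed than the paper's two-sentence argument, in particular in verifying that each $\ve_j$ carries its natural orientation and that $r$ does not occur as an internal vertex of $\mathbb{o}(T)$.
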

\begin{proof}
    By \Cref{lem:vertexincomponent,cor:trail-solid}, the trail $T$ can be partitioned into trail segments $(T_1, T_2, \ldots, T_\ell)$ such that for every odd $i$, $T_i$ is a sequence of trail-directable edges in $T$, and for every even $i$, $T_i$ is contained in a unique trail-undirectable component. From the definition of trail-skeleton, for odd $i$, $T$ witnesses that $T_i$ corresponds to a directed path $D[T_i]$ in $\trailskeleton{B}$ oriented as in $T$, and the unique trail-undirectable component containing $T_{i+1}$ witnesses that the last vertex of $D[T_i]$ is equal to the first vertex of $D[T_{i+2}]$.
\end{proof}
%By the proof above, $f$ is a many-to-one mapping which preserves the ordering and orientation of trail-directable edges in $r$-trails of $B$. We will show that $f$ is surjective.

\begin{lemma}\label{lem:bijection_trail_to_trail_skeleton2}
    Let $B$ be a trail-reachable bidirected graph rooted at $r$. For every $r$-trail $S$ of $\trailskeleton{B}$, there exists an $r$-trail $T$ of $B$ with $\mathbb{o}(T)=S$.
    % Let $T$ be an $r$-trail in $\trailskeleton{B}$. Then there exists an $r$-trail $T'$ in $B$ with $\mathbb{E}(T)=\mathbb{o}(T')$, and \blue{vice versa.}
\end{lemma}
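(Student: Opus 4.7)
The plan is to realize the directed $r$-trail $S = v_0 \vec{g}_1 v_1 \cdots \vec{g}_\ell v_\ell$ of $\trailskeleton{B}$ as an $r$-trail $T$ in $B$ by traversing $\vec{g}_1, \ldots, \vec{g}_\ell$ in order and inserting between each consecutive pair a short trail inside a trail-undirectable component whenever the intermediate vertex $v_j$ is the contracted vertex $c_k$ of some $C_k$. A key observation derivable from \cref{lem:vertexincomponent} is that at a trail-solid vertex $v \notin \bigcup_k V(C_k)$, every trail-directable edge with head $v$ carries the same sign at $v$ while every one with tail $v$ carries the opposite sign; otherwise we would obtain $r$-trails arriving at $v$ with both signs, forcing $v$ into a trail-undirectable component. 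Hence at such vertices consecutive edges of $S$ already satisfy the sign-alternation condition in $B$ and no insertion is needed.

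Write $t_j, h_j$ for the tail and head of $\vec{g}_j$ in $B$. For $1 \le j \le \ell - 1$ with $v_j = c_k \ne r$, \cref{lem:component_properties}\ref{itm:3.1i} forces $g_j = f_k$ and $h_j = c_k$; applying \cref{lem:component_properties}\ref{itm:3.1iii} with $w = t_{j+1}$ and $\alpha_j := -\sigma(g_{j+1}, t_{j+1})$ we obtain $T^{(t_{j+1}, \alpha_j)}$, and we let $P_j$ be the trail that remains after deleting its initial edge $\vec{f}_k$. By design $P_j$ starts at $c_k$ with first edge of sign $-\sigma(f_k, c_k)$, ends at $t_{j+1}$ with last edge of sign $\alpha_j$, and has all edges in $E(C_k) \setminus \{f_k\}$, so it is sign-compatible with both $\vec{g}_j$ and $\vec{g}_{j+1}$. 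For $j = 0$: if $r \notin \bigcup_k V(C_k)$ take $P_0$ trivial; otherwise $r = c_k$ for some $k$, and we apply \cref{lem:component_properties_root}\ref{itm:3.2iii} to obtain $R^{(t_1, -\sigma(g_1, t_1))}$ and define $P_0$ to be the suffix of this trail starting at its last occurrence of $r$, which is still a trail of $B$ from $r$ to $t_1$ with the required terminal sign, with edges in $E(C_k)$, and now without internal occurrences of $r$. At all other vertices we take $P_j$ trivial, and set $T := P_0 \circ \vec{g}_1 \circ P_1 \circ \cdots \circ P_{\ell-1} \circ \vec{g}_\ell$.

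It then remains to verify that $T$ is an $r$-trail of $B$ with $\mathbb{o}(T) = S$. Sign-alternation at internal vertices holds by the key observation at non-component vertices and by the explicit control in the $P_j$'s at $v_j = c_k$. Edge-disjointness follows because the $g_j$ are distinct (as $S$ is a trail), each $P_j$ uses only edges of a single $E(C_k) \setminus \{f_k\}$, distinct components are edge-disjoint, and each $c_k \ne r$ appears at most once along $S$ since its only in-edge in $\trailskeleton{B}$ is $\vec{f}_k$. The step I expect to be the main obstacle---forbidding $r$ as an internal vertex of $T$---is automatic when $r \notin \bigcup_k V(C_k)$; when $r = c_k$, \cref{lem:component_properties_root}\ref{itm:3.2i} forbids any $\vec{g}_j$ from having head $r$, so no $v_j$ with $j \ge 1$ equals $r$, subsequent $P_j$ lie in components disjoint from $V(C_k)$ and thus avoid $r$, and the suffix-truncation of $P_0$ eliminates $r$ from its interior. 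Finally $\mathbb{o}(T) = S$ because the trail-directable edges of $T$, read in order, are exactly $\vec{g}_1, \ldots, \vec{g}_\ell$.
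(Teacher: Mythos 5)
Your proof is correct and follows essentially the same strategy as the paper's: realize each edge of $\trailskeleton{B}$ (or each transition through a contracted vertex $c_k$) by a trail supplied by \cref{lem:component_properties}\ref{itm:3.1iii} or \cref{lem:component_properties_root}\ref{itm:3.2iii} and concatenate, using pairwise disjointness of the trail-undirectable components. Your write-up is in fact somewhat more careful than the paper's, explicitly verifying sign-compatibility at trail-solid vertices outside components and truncating $P_0$ so that $r$ is not an internal vertex.
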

\begin{proof}
   From the definition of trail-skeleton, $V(\trailskeleton{B})$ is exactly the set of trail-solid vertices of $B$. First we will construct, for each edge $\vec{uv}$ in $\trailskeleton{B}$, a $u$--$v$~trail in $B$. If $u \in V(B) \setminus \bigcup_{i \in [k]} V(C_i)$, then $\vec{uv}$ also exists in $B$ by \cref{cor:trail-solid}. Otherwise, $u$ is in some trail-undirectable component $C_i$. Then by \cref{cor:trail-solid} and \cref{lem:component_properties}--\cref{itm:3.1i}, $u$ is either $r$ or the head of $\vf_i$. If $\vec{uv}\notin B$, then there exists a vertex $w$ in $C_i$ other than $u$ satisfying $\vec{wv}\in B$. By \cref{lem:component_properties}-\cref{itm:3.1iii} and \cref{lem:component_properties_root}-\cref{itm:3.2iii}, there exists a $u$--$\vec{wv}$~trail in $B$ with all internal vertices in $C_i$.
   Since all trail-undirectable components are pairwise disjoint, for any $r$-trail in $\trailskeleton{B}$, by concatenating all these trails we can find an $r$-trail in $B$ that satisfies the required condition. 
\end{proof}

\begin{lemma}\label{lem:paths_in_path_reachable}
    Let $B$ be a path-reachable bidirected graph rooted at $r$. For every $r$-path $P$ of $\trailskeleton{B}$, there exists an $r$-path $Q$ of $B$ with $\mathbb{o}(Q)=P$.
    % $\mathbb{E}(P)=\mathbb{o}(P')$.
\end{lemma}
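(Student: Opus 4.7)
The plan is to lift $P$ to $B$ edge by edge, bridging the gaps introduced by contracting trail-undirectable components. Write $P = v_0 \vec{e}_1 v_1 \cdots \vec{e}_\ell v_\ell$ with $v_0 = r$. A preliminary observation is that for every trail-directable edge $e_i$, the head of $\vec{e}_i$ in $B$ coincides with its head $v_i$ in $\trailskeleton{B}$: if $v_i = c_j$ for some $j$, then the uniqueness clause of~\cref{lem:component_properties}\cref{itm:3.1i} forces $e_i = f_j$, whose head in $B$ is already $c_j$. By contrast, the tail of $\vec{e}_i$ in $B$ may be some $w_i \in V(C_j) \setminus \{c_j\}$ whenever $v_{i-1} = c_j$, and this is precisely where a bridging path is required.

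The bridges come from the path-reachability hypothesis. Fix an index $i$ with $v_{i-1} = c_j$ and $w_i \neq c_j$, and pick an $r$--$\vec{e}_i$ path $R_i$ in $B$. I claim that $R_i$ enters $V(C_j)$ only through $\vec{f}_j$ at $c_j$ and then remains in $V(C_j)$ until it traverses $\vec{e}_i$ at $w_i$. Indeed, every trail-undirectable edge lies entirely in a single trail-undirectable component, so every edge joining $V(C_j)$ to its complement is trail-directable; by~\cref{lem:component_properties}\cref{itm:3.1i}, the only such edge whose natural orientation enters $V(C_j)$ is $\vec{f}_j$, and $R_i$, being a path, cannot use $\vec{f}_j$ twice. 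The sub-path $Q_j^{(i)}$ of $R_i$ from $c_j$ to $w_i$ is the bridge we need. The analogous argument, with~\cref{lem:component_properties_root}\cref{itm:3.2i} in place of~\cref{lem:component_properties}\cref{itm:3.1i}, handles the boundary case $r \in V(C_0)$: there is no trail-directable edge entering $V(C_0)$, so $R_1$ cannot leave $V(C_0)$ and later return, and its initial segment to $w_1$ lies entirely inside $V(C_0)$.

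With the bridges at hand, I would assemble $Q$ by walking along $P$: at each step either append $\vec{e}_i$ directly (when the current vertex already equals $w_i$) or first append $Q_j^{(i)}$ to reach $w_i$ and then append $\vec{e}_i$. Sign compatibility at every junction is automatic because each bridge $Q_j^{(i)}$ is a segment of the trail $R_i$ and is attached to $\vec{e}_i$ and to $\vec{f}_j$ exactly as inside $R_i$. To verify that $Q$ is a path, I would note that its trail-solid vertices are precisely $v_0, \ldots, v_\ell$, which are distinct because $P$ is a path, while the remaining vertices of $Q$ lie inside various $V(C_j) \setminus \{c_j\}$; since each such component is entered at most once (each $c_j$ appears at most once on $P$) and distinct components are disjoint, distinct bridges cannot collide, and each individual $Q_j^{(i)}$ is a path as a sub-path of $R_i$. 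The identity $\mathbb{o}(Q) = P$ then follows by construction, since the trail-directable edges of $Q$ in order are exactly $\vec{e}_1, \ldots, \vec{e}_\ell$.

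The step I expect to be most delicate is the structural claim that every $r$-path meeting $V(C_j)$ must enter via $\vec{f}_j$ at $c_j$ (or, in the boundary case, start at $r \in V(C_0)$ and never leave). This is what converts the abstract path-reachability of $e_i$ into a concrete local connector inside $C_j$ with the correct endpoints and sign pattern; the remaining work is essentially bookkeeping.
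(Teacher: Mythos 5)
Your proof is correct and takes essentially the same route as the paper's: for each skeleton edge whose tail was altered by contracting a component $C_j$, you use path-reachability of the underlying trail-directable edge to obtain an $r$-path through it, argue via the uniqueness in \cref{lem:component_properties}\cref{itm:3.1i} (resp.\ \cref{lem:component_properties_root}\cref{itm:3.2i}) that this path enters $V(C_j)$ only through $\vec{f}_j$ at $c_j$ and remains there, and concatenate the resulting bridges. Your extra bookkeeping on sign compatibility and disjointness of the bridges correctly fills in what the paper leaves implicit.
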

\begin{proof}
    Similar to \cref{lem:bijection_trail_to_trail_skeleton2}, we will construct, for each edge $\vec{uv}$ in $\trailskeleton{B}$, a $u$--$v$~path $P_0$ in $B$. If $\vec{uv}$ is in $B$, we just take it. Otherwise, it follows that $u$ is in some trail-undirectable component $C_i$ of $B$ and there exists a $u$--$v$~trail $T$ in $B$ with $\vec{wv}\in T$ and $w\in V(C_i)$. Let $g$ be the trail-directable edge of $T$ from $w$ to $v$. Since every edge in $B$ is path-reachable, there is a path $P_{g}$ from $r$ ending in $g$. 
    If $u=r$, then we can set $P_0:=P_{g}$. Otherwise, $u$ is the head of $\vf_i$. Since $\vf_i$ is the unique edge of $B$ with head $u$ in $V(C_i)$ by \cref{lem:component_properties}--\cref{itm:3.1i}, it follows that $\vf_i \in P_{g}$, and so we can set $P_0:=uP_{g}$. Since the internal vertices of $P_0$ are contained in $V(C_i)$ and all trail-undirectable components are pairwise disjoint, by concatenating all these paths we can find an $r$-path $Q$ in $B$ with $\mathbb{o}(Q)=P$. This completes the proof.
\end{proof}

\subsection{Edge-disjointness}
The definition of $\mathbb{o}(T)$ implies:
\begin{remark}\label{rem:trail-skeleton-edge-disjoint}
    Let $B$ be a trail-reachable bidirected graph and let $T_1$ and $T_2$ be two edge-disjoint $r$~trails of $B$. Then $\mathbb{o}(T_1)$ and $\mathbb{o}(T_2)$ are two edge-disjoint $r$-trails of $\trailskeleton{B}$.
    % $\mathcal{S}$ be a set of edge-disjoint $r$--$x$~trails of $B$. Then $\mathcal{S}$ corresponds to a \blue{set} $\mathcal{S}'$ of edge-disjoint $r$--$x$~trails of $\trailskeleton{B}$ with $|\mathcal{S}|=|\mathcal{S}'|$.
\end{remark}
\noindent
In this subsection we show that the converse holds true if $B$ is edge-clean:
A bidirected graph $B$ rooted at $r$ is called {\em edge-clean} if $r$ is not contained in a trail-undirectable component of $B$. In other words, all edges incident with $r$ are trail-directable.
\begin{lemma}\label{lem:edge-clean_characterisation}
    Let $B$ be a bidirected graph rooted at $r$. Then $B$ is edge-clean if and only if $B$ does not contain a nontrivial $r$--$r$~trail.  
\end{lemma}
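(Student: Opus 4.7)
The plan is to prove both directions by short arguments exploiting the symmetry of trails under reversal together with the definition of trail-undirectability.

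For the forward direction I will prove the contrapositive: suppose $B$ contains a nontrivial $r$--$r$ trail $T$. By replacing $T$ with a shortest such trail, I may assume that $r$ does not occur as an internal vertex, so $T$ is in fact an $r$-trail. Write $\ve_1$ for the first oriented edge of $T$, with tail $r$ and head $v$. Then the single-edge trail $r\,\ve_1\,v$ is an $r$--$\ve_1$ trail, while the reversal $T^{-1}$ is an $r$-trail whose terminal oriented edge is $\ev_1$ (the orientation of $e_1$ with head $r$), hence an $r$--$\ev_1$ trail. Thus $e_1$ is trail-undirectable, and since $e_1$ is incident with $r$, the vertex $r$ lies in a nontrivial trail-undirectable component of $B$, so $B$ is not edge-clean.

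For the backward direction, suppose $B$ is not edge-clean, so $r$ is contained in some nontrivial trail-undirectable component $C$. Since $B$ is loopless and $C$ is a connected subgraph containing $r$ and at least one edge, $r$ must be incident in $C$ with some trail-undirectable edge $e$. Let $\ev$ be the orientation of $e$ whose head is $r$. By the definition of trail-undirectability, $B$ admits an $r$--$\ev$ trail $T$. By construction $T$ starts at $r$, ends at $r$, and contains the edge $e$, so it is a nontrivial $r$--$r$ trail.

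The only subtlety is ensuring that the trails produced in each direction really qualify as $r$-trails, i.e.\ do not revisit $r$ internally. For the backward direction this is automatic from the definition of an $r$--$\ev$ trail. For the forward direction it is handled by passing to a shortest nontrivial $r$--$r$ trail before reversing, which cannot have $r$ as an internal vertex without admitting further shortening.
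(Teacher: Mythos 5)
Your proof is correct and follows essentially the same route as the paper's: reversing the $r$--$r$ trail to witness trail-undirectability of an edge at $r$ for one direction, and using the definition of a trail-undirectable edge incident with $r$ for the other. Your extra care in passing to a shortest $r$--$r$ trail so that the reversal is genuinely an $r$-trail is a nice touch that the paper glosses over.
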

\begin{proof}
 First, assume that $B$ is edge-clean and suppose for a contradiction that $T$ is a nontrivial $r$--$r$~trail in $B$. Then $T^{-1}$ is also a nontrivial $r$--$r$~trail, so every edge of $T$ is trail-undirectable and $r$ is contained in a trail-undirectable component, contradicting that $B$ is edge-clean. Next, assume that $B$ is not edge-clean, so $r$ is in a trail-undirectable component $C$. Let $e \in E(C)$ be a trail-undirectable edge incident with $r$ in $V(C)$. Then by definition there is an $r$-$r$-trail with final edge $e$.
\end{proof}

An $r$-trail $T$ of $B$ is {\em proper} if $T$ contains at least one trail-directable edge.
\begin{lemma}\label{lem:trails_sharing_edge}
    Let $B$ be a edge-clean bidirected graph and let $T$ and $T'$ be proper $r$-trails of $B$. If $T$ and $T'$ share an edge, then they share a trail-directable edge. 
\end{lemma}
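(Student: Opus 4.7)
\medskip

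\noindent\textbf{Proof plan.} I would argue by contradiction: suppose $T$ and $T'$ share an edge but share no trail-directable edge. Then every edge in $E(T)\cap E(T')$ is trail-undirectable, so I can pick a common edge $e$ and look at the trail-undirectable component $C_i$ containing $e$. Since $B$ is edge-clean, $r\notin V(C_i)$, so both $T$ and $T'$, viewed as $r$-trails, must enter $V(C_i)$ from outside (using $e$ puts them inside $V(C_i)$, because both endpoints of a trail-undirectable edge lie in the same trail-undirectable component).

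The key structural observation is then the following: let $T$ be any $r$-trail that visits $V(C_i)$, and let $u$ be the first vertex of $T$ in $V(C_i)$. The edge $g$ of $T$ arriving at $u$ cannot be trail-undirectable, because then both endpoints of $g$ would lie in $C_i$, contradicting the minimality of $u$; so $g$ is trail-directable. Moreover, the orientation of $g$ appearing in $T$ must be its natural orientation $\vg$ (otherwise $T$ would witness that $g$ is trail-undirectable). Thus $\vg$ has head in $V(C_i)$, and by \cref{lem:component_properties}\ref{itm:3.1i} we conclude $g=f_i$.

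Applying this observation to both $T$ and $T'$, each of them contains $f_i$. Since $f_i$ is trail-directable, this contradicts the assumption that $T$ and $T'$ share no trail-directable edge, completing the proof.

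\medskip

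\noindent\textbf{Possible obstacles.} The only subtlety is the step ``$g$ must be used in its natural orientation''; this is where edge-cleanness plays no role, but I should make sure to invoke the very definition of trail-directability (a trail-directable edge has a \emph{unique} orientation appearing as the last edge of some $r$-trail). I also want to double-check that ``$T$ visits $V(C_i)$'' genuinely follows from ``$T$ uses $e$'', which is immediate since a trail-undirectable edge has both endpoints in its component $C_i$. The properness hypothesis on $T$ and $T'$ is in fact not needed for this argument, but stating the lemma with it does no harm.
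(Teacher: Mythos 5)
Your proposal is correct and follows essentially the same route as the paper: locate the trail-undirectable component $C_i$ of a shared edge, use edge-cleanness to get $r\notin V(C_i)$, and argue that the first edge of each trail with an endpoint in $V(C_i)$ must be the unique trail-directable edge $f_i$ from \cref{lem:component_properties}\ref{itm:3.1i}. Your side remarks (that the entering edge is used in its natural orientation, and that properness is in fact implied rather than needed) are accurate and just make explicit what the paper leaves implicit.
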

\begin{proof}
    Let $e$ be an edge in $E(T) \cap E(T')$. We can assume that $e$ is trail-undirectable, since otherwise we are done.
    Let $C_i$ be the trail-undirectable component containing $e$.
    
    Since $B$ is edge-clean, $r \notin V(C_i)$ and we can apply~\cref{lem:component_properties} to $C_i$.
    Thus there exists a unique trail-directable edge $\vf_i$ with head in $V(C_i)$.
    Note that the first edge of $T$ with an endpoint in $V(C_i)$ is trail-directable with head in $V(C_i)$ and thus is $f_i$.
    Similarly, $f_i$ is the first edge of $T'$ with an endpoint in $V(C_i)$. Thus $f_i$ is a trail-directable edge in $E(T) \cap E(T')$.
\end{proof}

\begin{corollary}\label{cor:trail-skeleton-edge-disjoint}
    Let $B$ be an edge-clean, trail-reachable bidirected graph rooted at $r$ and let $T_1$ and $T_2$ be two edge-disjoint $r$~trails of $B$. If $\mathbb{o}(T_1)$ and $\mathbb{o}(T_2)$ are edge-disjoint then so are $T_1$ and $T_2$.
    % every \blue{set} $\mathcal{T}$ of edge-disjoint $r$--$x$~trails in $\trailskeleton{B}$ corresponds to a set $\mathcal{S}$ of edge-disjoint $r$--$x$~trails in $B$ with $|\mathcal{T}|=|\mathcal{S}|$.
\end{corollary}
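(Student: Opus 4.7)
The plan is to prove the contrapositive: assuming $T_1$ and $T_2$ share some edge $e$, I will exhibit an edge common to $\mathbb{o}(T_1)$ and $\mathbb{o}(T_2)$, contradicting the hypothesis. (Note that the edge-disjointness of the two $r$-trails $T_1,T_2$ in the conclusion is the nontrivial content; the statement is the converse of \cref{rem:trail-skeleton-edge-disjoint}.)

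First I will dispose of the trivial cases. If either $T_i$ is the trivial $r$-trail, then $E(T_i)=\emptyset$, so the shared edge $e$ cannot exist; hence both $T_1$ and $T_2$ are nontrivial. I will then use edge-cleanness of $B$ to upgrade ``nontrivial'' to ``proper'' in the sense of \cref{lem:trails_sharing_edge}: since $B$ is edge-clean, every edge incident with $r$ is trail-directable (this is the defining property stated just before \cref{lem:edge-clean_characterisation}), so the first edge of each nontrivial $r$-trail $T_i$ is trail-directable, meaning $T_i$ contains at least one trail-directable edge.

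Having verified that $T_1$ and $T_2$ are both proper $r$-trails that share an edge, I apply \cref{lem:trails_sharing_edge} to conclude that they in fact share a trail-directable edge $f$. By the very construction of $\mathbb{o}(T_i)$ (the $r$-trail in $\trailskeleton{B}$ obtained by reading off the trail-directable edges of $T_i$ in order, as produced in \cref{lem:bijection_trail_to_trail_skeleton1}), the edge $f$ appears in both $\mathbb{o}(T_1)$ and $\mathbb{o}(T_2)$. This contradicts the assumed edge-disjointness of $\mathbb{o}(T_1)$ and $\mathbb{o}(T_2)$ and completes the proof.

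The only mild subtlety is ensuring that \cref{lem:trails_sharing_edge} is applicable, i.e.\ that both trails are proper rather than merely nontrivial; this is precisely what edge-cleanness buys us, and there is no further difficulty.
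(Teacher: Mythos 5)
Your proof is correct and follows essentially the same route as the paper's: argue by contradiction/contraposition, invoke \cref{lem:trails_sharing_edge} to upgrade a shared edge to a shared trail-directable edge, and observe that this edge then lies in both $\mathbb{o}(T_1)$ and $\mathbb{o}(T_2)$. Your explicit verification that the $T_i$ are proper (via edge-cleanness and trail-reachability, so that the first edge of each nontrivial $r$-trail is trail-directable) is a detail the paper leaves implicit, and it is correct.
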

\begin{proof}
    Suppose not for a contradiction. Since $T_1$ and $T_2$ are not edge-disjoint, then they share a trail-directable edge $\ve$ by~\cref{lem:trails_sharing_edge}, which implies that $\ve \in \mathbb{o}(T_1)\cap \mathbb{o}(T_2) = S_1\cap S_2$, which is the desired contradiction.
\end{proof}

\subsection{Connectivity}
We compare the rooted connectivities of $B$ and $\trailskeleton{B}$ in the following lemmata.

Recall that, given vertices $v$ and $w$ of a directed graph $D$, we write $\lambda_D(v,w)$ for the maximum number of edge-disjoint directed $v$--$w$~paths.
In the same way, given vertices $v$ and $w$ of a bidirected graph $B$, let $\lambda_B^{\textrm{path}}(v,w)$ be the maximum number of edge-disjoint $v$--$w$~paths and let $\lambda_B^{\alpha, \textrm{ path}}(v,w)$ be the maximum number of edge-disjoint $v$--$(w,\alpha)$~paths for $\alpha \in \{+,-\}$. Similarly, we define $\lambda_B^{\textrm{trail}}(v,w)$ and $\lambda_B^{\alpha, \textrm{ trail}}(v,w)$.
\begin{lemma}\label{lem:disjoint_paths_skeleton}
    Let $B$ be an edge-clean, trail-reachable bidirected graph rooted at $r$ and let $x \in V(\trailskeleton{B})$. Then
    $\lambda_B^{\textrm{trail}}(r, x) = \lambda_{\trailskeleton{B}}(r, x)$. Furthermore, if $B$ is path-reachable, then $\lambda_B^{\textrm{path}}(r, x) = \lambda_{\trailskeleton{B}}(r, x)$.
\end{lemma}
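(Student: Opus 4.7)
The plan is to transfer edge-disjoint families of trails (respectively paths) between $B$ and $\trailskeleton{B}$ using the projection $\mathbb{o}$ together with the lifting results~\cref{lem:bijection_trail_to_trail_skeleton2} and~\cref{lem:paths_in_path_reachable}, and to track edge-disjointness via~\cref{rem:trail-skeleton-edge-disjoint} and~\cref{cor:trail-skeleton-edge-disjoint}. I may assume $x \neq r$, since the case $x = r$ is degenerate and follows immediately from the characterisation of edge-cleanness given in~\cref{lem:edge-clean_characterisation}.

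For the inequality $\lambda_B^{\textrm{trail}}(r,x) \le \lambda_{\trailskeleton{B}}(r,x)$, I would take any $k$ pairwise edge-disjoint $r$--$x$~trails $T_1, \ldots, T_k$ of $B$ and consider the trails $S_i := \mathbb{o}(T_i)$ in $\trailskeleton{B}$. These are pairwise edge-disjoint by~\cref{rem:trail-skeleton-edge-disjoint}. A short case analysis based on~\cref{cor:trail-solid} and~\cref{lem:component_properties}--\cref{itm:3.1i} shows that each $S_i$ still ends at $x$: if $x$ lies outside every trail-undirectable component, then the last edge of $T_i$ must be trail-directable and is preserved in $S_i$; if instead $x = c_j$ for some $j$, then any trail-directable edge of $T_i$ with head in $V(C_j)$ must be $f_j$, so $f_j$ is forced to be the last trail-directable edge of $T_i$ and its head is $c_j = x$. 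Standard flow-decomposition in digraphs then converts the $S_i$ into $k$ edge-disjoint $r$--$x$~paths of $\trailskeleton{B}$, giving $\lambda_{\trailskeleton{B}}(r,x) \ge k$.

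For $\lambda_B^{\textrm{trail}}(r,x) \ge \lambda_{\trailskeleton{B}}(r,x)$, I would take $k$ pairwise edge-disjoint $r$--$x$~paths $P_1, \ldots, P_k$ of $\trailskeleton{B}$ and invoke~\cref{lem:bijection_trail_to_trail_skeleton2} to lift each $P_i$ to an $r$-trail $T_i$ of $B$ with $\mathbb{o}(T_i) = P_i$. Inspecting the construction in that lemma, $T_i$ terminates at the last vertex of $P_i$, namely $x$, so each $T_i$ is an $r$--$x$~trail in $B$. Then~\cref{cor:trail-skeleton-edge-disjoint} applied pairwise (using edge-cleanness of $B$) forces the $T_i$ to be pairwise edge-disjoint, and hence $\lambda_B^{\textrm{trail}}(r,x) \ge k$.

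The path version is essentially the same. The inequality $\lambda_B^{\textrm{path}}(r,x) \le \lambda_{\trailskeleton{B}}(r,x)$ is immediate by specialising the trail case, since every $r$--$x$~path of $B$ is in particular an $r$--$x$~trail. For the reverse inequality, path-reachability of $B$ lets me use~\cref{lem:paths_in_path_reachable} in place of~\cref{lem:bijection_trail_to_trail_skeleton2}, lifting each $P_i$ to an actual $r$--$x$~path $Q_i$ of $B$ with $\mathbb{o}(Q_i) = P_i$; edge-disjointness of the $Q_i$ then follows once more from~\cref{cor:trail-skeleton-edge-disjoint}. I expect the one genuinely nontrivial point to be the bookkeeping that confirms the projected trails $\mathbb{o}(T_i)$ truly end at $x$; everything else is a direct application of the machinery already established.
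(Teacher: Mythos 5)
Your proposal is correct and follows essentially the same route as the paper's (very terse) proof: project with $\mathbb{o}$ and use \cref{rem:trail-skeleton-edge-disjoint} for one inequality, lift with \cref{lem:bijection_trail_to_trail_skeleton2} (resp.\ \cref{lem:paths_in_path_reachable}) and use \cref{cor:trail-skeleton-edge-disjoint} for the other. The endpoint bookkeeping you flag as the nontrivial point is indeed glossed over in the paper, and your case analysis for it is sound.
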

\begin{proof}
By~\cref{rem:trail-skeleton-edge-disjoint,cor:trail-skeleton-edge-disjoint}, it holds that $\lambda_B^{\textrm{trail}}(r, x) = \lambda_{\trailskeleton{B}}(r, x)$, since every $r$--$x$~trail in $\trailskeleton{B}$ includes an $r$--$x$~path. If $B$ is path-reachable then by combining this with \cref{lem:paths_in_path_reachable}, we can deduce $\lambda_B^{\textrm{path}}(r, x) = \lambda_{\trailskeleton{B}}(r, x)$.
\end{proof}

\begin{lemma}\label{lem:trail-solid-one-sign}
    Let $B$ be a trail-reachable bidirected graph rooted at $r$ and let $x \in V(\trailskeleton{B}) \setminus \{r\}$. Then there is $\alpha \in \{+,-\}$ such that there is no $r$--$(x,\alpha)$~path in $B$, i.e.\ $\lambda_B^{\alpha, \textrm{ path}}(r, x) = 0$.
\end{lemma}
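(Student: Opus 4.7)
The plan is to split into two cases depending on whether $x$ lies in a trail-undirectable component of $B$.

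\emph{Case 1:} If $x \notin \bigcup_{i \in [k]} V(C_i)$, then \cref{lem:vertexincomponent} immediately supplies a sign $\alpha \in \{+,-\}$ for which no $r$--$(x,\alpha)$~trail exists in $B$; since every path is a trail, this gives $\lambda_B^{\alpha, \textrm{ path}}(r, x) = 0$.

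\emph{Case 2:} Otherwise $x \in V(C_i)$ for some $i$. Because $x$ is trail-solid with $x \neq r$, the definition of trail-solid forces $x = c_i$, and moreover $r \notin V(C_i)$ (otherwise $c_i = r$ and so $x = r$, contradicting $x \neq r$). Hence \cref{lem:component_properties}--\cref{itm:3.1i} applies to $C_i$ and supplies a unique trail-directable edge $f_i$ whose natural orientation $\vf_i$ has head $c_i = x$. I would then prove the following intermediate claim, which is the heart of the argument: every $r$-trail of $B$ ending at $x$ contains the oriented edge $\vf_i$. To establish it, I would take the first edge $e$ along such a trail having an endpoint in $V(C_i)$ (which exists because the trail ends at $x \in V(C_i)$) and observe that $e$ cannot be trail-undirectable: otherwise $e$ would lie in the trail-undirectable component $C_i$, forcing both endpoints of $e$ into $V(C_i)$, but the earlier endpoint of $e$ along the trail is either $r \notin V(C_i)$ or the head of some strictly earlier edge meeting $V(C_i)$, contradicting either $r \notin V(C_i)$ or the minimality of $e$. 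Thus $e$ is trail-directable with head in $V(C_i)$, and by the uniqueness part of \cref{lem:component_properties}--\cref{itm:3.1i} we must have $e = f_i$ with orientation $\vf_i$ in the trail.

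Granted the claim, the proposition follows quickly. For any $r$--$x$~path $P$, the claim gives $\vf_i \in P$; since $\vf_i$ has head $x$ and $P$ does not revisit vertices, $\vf_i$ is forced to be the terminal edge of $P$, so $P$ arrives at $x$ with sign $\sigma(f_i, x)$. Setting $\alpha := -\sigma(f_i, x)$ yields $\lambda_B^{\alpha, \textrm{ path}}(r, x) = 0$, as required. The main obstacle will be the intermediate claim, i.e.\ ruling out the possibility that the first edge entering $V(C_i)$ is trail-undirectable; once that is handled, the passage from trails to paths is just the observation that a path cannot leave $x$ after arriving there.
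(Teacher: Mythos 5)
Your proof is correct and takes essentially the same route as the paper's: both arguments come down to observing that a trail-solid vertex $x \neq r$ lying in a trail-undirectable component must equal $c_i$ with $r \notin V(C_i)$, and then using the uniqueness of the in-edge $f_i$ from \cref{lem:component_properties}\ref{itm:3.1i} to pin down the sign with which any $r$--$x$~path can arrive. The only difference is presentational — the paper argues by contradiction (two oppositely signed paths make the terminal edge trail-undirectable, which is exactly the content of your Case 1 via \cref{lem:vertexincomponent}) and leaves the final step terse, whereas you give a direct case split and spell out why every $r$--$x$~path must terminate with $\vf_i$.
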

\begin{proof}
    Suppose not, and let $P$ be an $r$--$(x,+)$~path and $Q$ an $r$--$(x,-)$~path in $B$. Furthermore, let $e$ be the terminal edge of $P$. Then $P$ and $Q + e$ witness that $e$ is trail-undirectable. Thus $x$ is contained in an undirectable component $C_i$ of $B$. Since $x \in V(\trailskeleton{B}) \setminus \{r\}$ we have $x = c_i$, where $c_i \in V(C_i)$ for some trail-undirectable component $C_i$ that does not contain $r$. Thus $\lambda_B^{\alpha, \textrm{ path}}(r,x) =0$ for some $\alpha \in \{+,-\}$ by~\cref{lem:component_properties}, a contradiction.
\end{proof}

\begin{lemma}\label{lem:disjoint_paths_skeleton_inside}
    Let $B$ be an edge-clean, trail-reachable bidirected graph rooted at $r$. Let $C_i$ be a trail-undirectable component of $B$. Then $\lambda_{\trailskeleton{B}}(r, c_i) = 1$ and $\lambda_B^{\text{trail}}(r, x) = 1$ for every $x \in V(C_i)$. Furthermore, if $B$ is path-reachable, then $\lambda_B^{\text{path}}(r, x) = 1$ for every $x \in V(C_i)$. 
\end{lemma}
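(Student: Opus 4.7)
I would handle the three assertions in order, using \cref{lem:component_properties} (which applies to $C_i$ because edge-cleanness forces $r \notin V(C_i)$) as the key structural tool; the unifying idea is that $f_i$ is the unique ``gateway'' from outside $V(C_i)$ into $V(C_i)$.

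For $\lambda_{\trailskeleton{B}}(r, c_i) = 1$: \cref{lem:component_properties}--\cref{itm:3.1i} makes $\vf_i$ the unique in-edge of $c_i$ in $\trailskeleton{B}$, which caps $\lambda_{\trailskeleton{B}}(r, c_i) \leq 1$. The matching lower bound follows from trail-reachability of $f_i$: via the operator $\mathbb{o}$ from \cref{lem:bijection_trail_to_trail_skeleton1}, an $r$--$\vf_i$~trail in $B$ projects to an $r$--$c_i$~directed trail in $\trailskeleton{B}$, which contains an $r$--$c_i$~path.

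For $\lambda_B^{\textrm{trail}}(r, x) = 1$ with $x \in V(C_i)$, I would first prove the upper bound by showing every $r$--$x$~trail must use $f_i$. Such a trail, going from $r \notin V(C_i)$ to $x \in V(C_i)$, contains a first vertex $w$ in $V(C_i)$ reached from outside $V(C_i)$; the edge into $w$ has exactly one endpoint in $V(C_i)$, so it cannot be trail-undirectable (those edges have both endpoints in the same trail-undirectable component) and must therefore be trail-directable, hence $f_i$ by \cref{lem:component_properties}--\cref{itm:3.1i}. Two edge-disjoint $r$--$x$~trails therefore cannot both contain $f_i$, yielding the bound $\leq 1$. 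For the lower bound I would concatenate an $r$--$\vf_i$~trail $R$ (guaranteed by trail-reachability) with the trail $T^{(x,+)} - \vf_i$ obtained from \cref{lem:component_properties}--\cref{itm:3.1iii}. This concatenation is a valid trail because signs alternate at $c_i$ (the first edge of $T^{(x,+)}$ after $\vf_i$ has sign $-\sigma(f_i, c_i)$ at $c_i$), and the two parts are edge-disjoint: the gateway argument above forces $R$ to stay outside $V(C_i)$ until its final step, so $R \cap E(C_i) = \emptyset$, while $T^{(x,+)} - \vf_i$ uses only edges of $E(C_i)$.

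For the path version under path-reachability, the upper bound $\lambda_B^{\textrm{path}}(r, x) \leq 1$ is immediate since paths are trails. For the lower bound I would pick any edge $g \in E(C_i)$ incident to $x$ (which exists because $x \in V(C_i)$ and $C_i$ is nontrivial) and invoke path-reachability of $B$ to obtain an $r$-path $P$ ending at some orientation $\vg$ of $g$. If $\vg$ has head $x$ then $P$ is already an $r$--$x$~path; otherwise $\vg$ has head $y$ (the other endpoint of $g$) and tail $x$, so $x$ is the penultimate vertex of $P$ and truncating $P$ at $x$ yields an $r$--$x$~path. The main obstacle is the edge-disjointness verification in the trail lower bound, which hinges on the uniqueness of $f_i$ as the gateway into $V(C_i)$ to rule out $R$ using any edge of $C_i$; once that is established, all three assertions follow routinely.
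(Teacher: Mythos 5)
Your proposal is correct and follows essentially the same route as the paper: both hinge on \cref{lem:component_properties}--\cref{itm:3.1i} making $f_i$ the unique gateway into $V(C_i)$, so that every $r$--$x$~trail contains $f_i$ and hence $\lambda_B^{\textrm{trail}}(r,x)\leq 1$, with the lower bounds supplied by trail-/path-reachability. The only (immaterial) difference is that the paper obtains $\lambda_{\trailskeleton{B}}(r,c_i)=1$ by invoking \cref{lem:disjoint_paths_skeleton} to transfer $\lambda_B^{\textrm{trail}}(r,c_i)=1$ to the skeleton, whereas you argue directly from the in-degree-one observation about $c_i$ in $\trailskeleton{B}$.
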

\begin{proof}
    By~\cref{lem:component_properties}, there exists a unique edge $f_i$ such that $\vf_i$ has head in $V(C_i)$. 
    Thus every $r$--$x$~trail for $x \in V(C_i)$ contains $f_i$, which implies that $\lambda_B^{\text{trail}}(r, x) = 1$ for every $x \in V(C_i)$.
    We deduce that $\lambda_{\trailskeleton{B}}(r, c_i) = \lambda_{B}^{\textrm{trail}}(r, c_i) = 1$ by applying the first assertion of~\cref{lem:disjoint_paths_skeleton} with $x = c_i$.
    Furthermore, if $B$ is path-reachable, then $1 \leq \lambda_{{B}}^{\textrm{path}}(r, x) \leq \lambda_B^{\textrm{trail}}(r, x) = 1$ for every $x \in V(C_i)$.
\end{proof}

\section{Application: Edge version of Menger's Theorem} \label{sec:edge-menger}
Bowler et al.~\cite{bowler2023mengertheorembidirected}*{Theorem 5.1} showed that Menger's theorem for edge-disjoint paths holds true in edge-clean bidirected graphs.
%and Menger's theorem for vertex-disjoint paths with respect to clean sets of (signed) vertices. 
%
In this section, we generalize the following strong version of Menger's theorem to edge-clean bidirected graphs:
\begin{theorem}\cite{frank2011connections}*{Theorem 2.5.1}\label{thm:directed_strong_menger}
    Let $D$ be a directed graph and let $r, x \in V(D)$. Then 
    \[ \lambda_D(r, x) = \min \left\lbrace  \left| \delta_D(X)
            \right|\ :\ 
             x\in X 
             \subseteq V(D)\setminus \{ r \}\right\rbrace. \]
\end{theorem}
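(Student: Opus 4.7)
The plan is the classical max-flow min-cut argument specialised to unit capacities. Write $\kappa(r,x) := \min\{|\delta_D(X)| : x \in X \subseteq V(D) \setminus \{r\}\}$, where $\delta_D(X)$ denotes the set of edges of $D$ with tail outside $X$ and head in $X$.

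First I would dispatch the easy inequality $\lambda_D(r,x) \leq \kappa(r,x)$. Fix any admissible $X$ and any family $\mathcal{P} = \{P_1, \ldots, P_m\}$ of pairwise edge-disjoint $r$-$x$ paths in $D$. Each $P_i$ begins at $r \notin X$ and ends at $x \in X$, so $E(P_i)$ contains at least one edge of $\delta_D(X)$; by edge-disjointness these cut-edges are distinct, so $m \leq |\delta_D(X)|$.

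For the reverse inequality, take $\mathcal{P}$ to be a maximum edge-disjoint collection of $r$-$x$ paths and build the auxiliary digraph $D^*$ on $V(D)$ consisting of (i) every edge of $D$ that is not used by any path in $\mathcal{P}$, and (ii) a reverse copy $(v,u)$ for every edge $(u,v)$ used by some path of $\mathcal{P}$. The central claim is that $x$ is not reachable from $r$ in $D^*$: otherwise, taking any $r$-$x$ walk $W$ in $D^*$ and forming the symmetric difference $E(\bigcup \mathcal{P}) \triangle E(W)$ (with the reverse copies re-oriented to match their originals) would yield an edge set in which the out-excess at $r$ and the in-excess at $x$ both equal $|\mathcal{P}|+1$, every other vertex being balanced. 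A standard Eulerian-style decomposition then produces $|\mathcal{P}|+1$ edge-disjoint $r$-$x$ walks, and shortening each by deleting closed sub-walks gives $|\mathcal{P}|+1$ edge-disjoint $r$-$x$ paths, contradicting maximality.

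Given the claim, set $Y := \{v \in V(D) : v \text{ is reachable from } r \text{ in } D^*\}$ and $X := V(D) \setminus Y$; then $r \in Y$ and $x \in X$, so $X$ is admissible. I would finish by showing $|\delta_D(X)| = |\mathcal{P}|$ via two observations. \emph{(a)} Every edge $(u,v) \in \delta_D(X)$ must lie on some path of $\mathcal{P}$: if not, $(u,v)$ would belong to $D^*$, propagating reachability from $u \in Y$ into $X$. \emph{(b)} Each path of $\mathcal{P}$ crosses from $Y$ to $X$ exactly once: it crosses at least once because $r \in Y$ and $x \in X$; and it cannot cross back from $X$ to $Y$, for such a return edge $(u',v')$ of the path would give a reverse $(v',u') \in D^*$ mapping $v' \in Y$ to $u' \in X$, contradicting $u' \notin Y$. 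Combining \emph{(a)}, \emph{(b)} and edge-disjointness yields $|\delta_D(X)| = |\mathcal{P}| = \lambda_D(r,x)$, so $\kappa(r,x) \leq \lambda_D(r,x)$, which together with the easy direction gives equality.

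The main obstacle is making the augmenting-walk step precise: showing that a walk in $D^*$ upgrades $\mathcal{P}$ to a larger edge-disjoint family. The subtlety is that the walk need not be a simple path and may interact with $\mathcal{P}$ in complicated ways, so one must argue via degree parity in the symmetric difference and Eulerian decomposition rather than by a naïve path-swap. Everything else is routine reachability bookkeeping inside $D^*$.
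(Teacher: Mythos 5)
This statement is not proved in the paper at all: it is imported verbatim as Theorem~2.5.1 of Frank's \emph{Connections in Combinatorial Optimization} and used as a black box (applied to the trail-skeleton in the proof of Theorem~\ref{thm:strong_menger}). So there is no in-paper argument to compare against; what you have written is a self-contained proof of the cited classical result. Your argument is the standard augmenting-path / max-flow--min-cut proof and it is correct: the easy inequality, the residual digraph $D^*$, the non-reachability claim, and the count $|\delta_D(X)|=|\mathcal P|$ via observations \emph{(a)} and \emph{(b)} are all sound. The one point you flag as the ``main obstacle'' is in fact easily dissolved: since reachability of $x$ from $r$ in $D^*$ is equivalent to the existence of a \emph{simple} $r$--$x$ path in $D^*$, you should take $W$ to be such a path rather than an arbitrary walk. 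Then no edge of $D$ is affected twice by the symmetric difference, the local computation (out-degree minus in-degree changes by $+1$ at $r$, $-1$ at $x$, $0$ elsewhere) is immediate, and the Eulerian-style decomposition into $|\mathcal P|+1$ edge-disjoint $r$--$x$ trails plus closed trails, followed by shortening trails to paths, finishes the contradiction. With that choice the parity bookkeeping you worry about never arises. Two cosmetic remarks: your claim in \emph{(b)} that a path ``crosses exactly once'' should be phrased as ``contains exactly one edge of $\delta_D(X)$'' (at least one since it starts in $Y$ and ends in $X$; at most one since any re-entry into $Y$ would force its tail into $Y$ via the reverse edge in $D^*$); and note that the resulting identity, combined with \emph{(a)} and edge-disjointness, gives a bijection between $\delta_D(X)$ and $\mathcal P$, which is exactly what is needed.
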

\noindent
%$\delta_{r,D}(X)$ is the set of edges $\ve \in \vE(V(D) \setminus X, X)$ for which there exists an $r$--$\ve$~path in $D$.

Given a bidirected graph $B$ rooted at $r$ and some set $X \subseteq V(B) \setminus \{r\}$, we define $\delta_{r,B}^{\mathsf{trail}} (X )$ and $\delta_{r,B}^{\mathsf{path}} (X )$ to be the sets of edges $e \in E(B)$ for which there exists an orientation $\ve$ such that the head of $\ve$ is in $X$, the tail of $\ve$ is not in $X$ and there exists an $r$--$\ve$~trail or an $r$--$\ve$~path, respectively.
Furthermore, given a directed graph $D$, $\delta_D(X)$ is the set of directed edges in $D$ whose tail is in $V(D) \setminus X$ and whose head is in $X$.

We begin by proving a version for trails:
\begin{theorem}\label{thm:strong_menger}
    Let $B$ be an edge-clean bidirected graph rooted at $r$ and let $x \in V(B) \setminus \{r\}$. Then 
    \[ \lambda_B^{\textrm{trail}}(r, x) = \min \left\lbrace  \left| \delta_{r,B}^{\mathsf{trail}} (X )
            \right|\ :\ 
             x\in X 
             \subseteq V(B)\setminus \{ r \}\right\rbrace. \]
\end{theorem}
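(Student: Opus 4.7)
The plan is to reduce to the directed strong Menger theorem (\cref{thm:directed_strong_menger}) applied to the trail-skeleton $\trailskeleton{B}$. Without loss of generality I assume $B$ is trail-reachable, since deleting non-trail-reachable edges alters neither $\lambda_B^{\mathrm{trail}}(r,x)$ nor any quantity $|\delta_{r,B}^{\mathsf{trail}}(X)|$ (by definition such edges cannot contribute to $\delta_{r,B}^{\mathsf{trail}}$).

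The easy inequality $\lambda_B^{\mathrm{trail}}(r,x) \leq |\delta_{r,B}^{\mathsf{trail}}(X)|$ for every admissible $X$ follows from a standard first-crossing argument. Given any family of $k$ edge-disjoint $r$--$x$~trails and any $X$ with $x\in X \subseteq V(B)\setminus\{r\}$, each trail possesses a uniquely determined first edge $e$ entering $X$; the prefix of the trail up to and including this edge is an $r$--$\vec{e}$~trail, so $e \in \delta_{r,B}^{\mathsf{trail}}(X)$. Edge-disjointness forces these $k$ first-crossing edges to be distinct.

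For the reverse inequality I split into two cases depending on whether $x$ is trail-solid. If $x \in V(\trailskeleton{B})$, I apply \cref{thm:directed_strong_menger} to $\trailskeleton{B}$ to obtain a set $X' \subseteq V(\trailskeleton{B}) \setminus \{r\}$ with $x \in X'$ and $|\delta_{\trailskeleton{B}}(X')| = \lambda_{\trailskeleton{B}}(r,x)$, and then \emph{lift} it to $B$ by setting $X := X' \cup \bigcup_{i:\,c_i \in X'} V(C_i)$. This lift is an admissible subset of $V(B)\setminus\{r\}$ because edge-cleanness gives $r \notin V(C_i)$ for every $i$. The key verification is a bijection $\delta_{r,B}^{\mathsf{trail}}(X) \leftrightarrow \delta_{\trailskeleton{B}}(X')$: every trail-undirectable edge is trapped inside some $V(C_i)$, which by construction lies entirely in $X$ or entirely in $V(B)\setminus X$, so no such edge appears on the boundary of $X$; every remaining edge is trail-directable and contributes to $\delta_{r,B}^{\mathsf{trail}}(X)$ precisely when its natural orientation (the only one admitting an $r$-trail) crosses into $X$, which by the lifting matches exactly the crossing of its image into $X'$ in $\trailskeleton{B}$. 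Combining the bijection with \cref{lem:disjoint_paths_skeleton} gives $|\delta_{r,B}^{\mathsf{trail}}(X)| = |\delta_{\trailskeleton{B}}(X')| = \lambda_{\trailskeleton{B}}(r,x) = \lambda_B^{\mathrm{trail}}(r,x)$, as required.

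In the remaining case $x$ lies in some $V(C_i) \setminus \{c_i\}$. Taking $X := V(C_i)$ (which avoids $r$ by edge-cleanness), \cref{lem:component_properties}\ref{itm:3.1i} together with the fact that edges with both endpoints in $V(C_i)$ are trail-undirectable (\cref{lem:component_properties}\ref{itm:3.1ii}) yields $\delta_{r,B}^{\mathsf{trail}}(X) = \{f_i\}$, matching $\lambda_B^{\mathrm{trail}}(r,x) = 1$ from \cref{lem:disjoint_paths_skeleton_inside}. The main conceptual point is the design of the lift in the first case: the whole argument hinges on choosing $X$ so that each trail-undirectable component sits entirely on one side, ensuring that only trail-directable edges contribute to the boundary in $B$ and that the boundaries in $B$ and in $\trailskeleton{B}$ have the same size; once this is arranged, the rest is bookkeeping.
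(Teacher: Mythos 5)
Your proposal is correct and follows essentially the same route as the paper: reduce to the trail-reachable case, apply \cref{thm:directed_strong_menger} to the trail-skeleton, lift the optimal cut by absorbing each trail-undirectable component wholesale (so only trail-directable edges can cross), and treat non-trail-solid $x$ separately via the unique in-edge $f_i$ of its component. The only difference is that you make the reduction to trail-reachability explicit, which the paper leaves implicit.
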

\noindent
We note that~\cref{thm:strong_menger} becomes false in general if we omit the condition of edge-cleanness~\cite{bowler2023mengertheorembidirected}*{Figure 2}.
Our proof of~\cref{thm:strong_menger} applies~\cref{thm:directed_strong_menger} to the edge-decomposition.

\begin{proof}
To show the inequality ``$\leq$'', let $\mathcal{P}$ be a set of edge-disjoint $r$--$x$~trails of size $\lambda_{B}^{\textrm{trail}}(r, x)$ in $B$. For every set $X \subseteq V(B) \setminus \{r\}$, the set $\delta_{r, B}^\mathsf{trail}(X)$ contains at least one edge of every path in $\mathcal{P}$.  Since the paths in $\mathcal{P}$ are edge-disjoint, it follows that \[\lambda_B^{\textrm{trail}}(r, x) \leq \min \left\lbrace  \left| \delta_{r,B}^{\mathsf{trail}} (X )
            \right|\ :\ 
             x\in X 
             \subseteq V(B)\setminus \{ r \}\right\rbrace .\]
             
To show the inequality ``$\geq$'', let $D$ be the trail-skeleton of $B$.
First, assume that $x \in V(D)$. By~\Cref{lem:disjoint_paths_skeleton}, $ \lambda_B^{\textrm{trail}}(r, x) = \lambda_D(r, x)$.        
Furthermore, by~\cref{thm:directed_strong_menger}, \[ \lambda_D(r, x) = \min \left\lbrace  \left| \delta_{D} (X )
            \right|\ :\ 
             x\in X 
             \subseteq V(D)\setminus \{ r \}\right\rbrace.\] Let $X \subseteq V(D) \setminus \{r\}$ be such that $x \in X$ and $\lambda_D(r, x) = |\delta_{D}(X)|$. By~\Cref{lem:bijection_trail_to_trail_skeleton2}, there exists a set $X_B:= X \cup \bigcup_{C_i: V(C_i) \cap X \neq \emptyset} V(C_i)$ in $V(B)$ such that $X_B \cap V(D) = X$ and $|\delta_{D}(X)| = |\delta_{r, B}^{\mathsf{trail}}(X_B)|$ since every edge of $B$ with precisely one endpoint in $X_B$ is trail-directable. We can conclude $ \lambda_B^{\textrm{trail}}(r, x) \geq \min \left\lbrace  \left| \delta_{r,B}^{\mathsf{trail}} (X )
            \right|\ :\ 
             x\in X 
             \subseteq V(B)\setminus \{ r \}\right\rbrace$.
    
             Next, assume that $x \in V(B) \setminus V(D)$.
             By~\cref{lem:disjoint_paths_skeleton_inside}, $\lambda_B^{\textrm{trail}}(r,x)=1$.
             Furthermore, there exists an undirectable component $C_i$ with $x \in V(C_i)$ and, by~\cref{lem:component_properties}, $|\delta_{r, B}^{\mathsf{trail}}(C_i)| = 1$.
             This implies
             $\lambda_B^{\textrm{trail}}(r, x) \geq \min \left\lbrace  \left| \delta_{r,B}^{\mathsf{trail}} (X )
            \right|\ :\ 
             x\in X 
             \subseteq V(B)\setminus \{ r \}\right\rbrace$, 
             which completes the proof.
    \end{proof}

    Now we turn our attention to paths:
    \begin{theorem}\label{thm:strong_menger_path}
     Let $B$ be an edge-clean bidirected graph rooted at $r$ and let $x \in V(B)$. Then  
        \[ \lambda_B^{\textrm{path}}(r, x) = \min \left\lbrace  \left| \delta_{r,B}^{\mathsf{path}} (X )
            \right|\ :\ 
             x\in X 
             \subseteq V(B)\setminus \{ r \}\right\rbrace. \]  
\end{theorem}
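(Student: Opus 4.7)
The plan is to mirror the proof of \cref{thm:strong_menger}, replacing trails by paths throughout.  The inequality ``$\leq$'' is immediate: for every $X \subseteq V(B) \setminus \{r\}$ containing $x$, each path in a maximum edge-disjoint family of $r$--$x$~paths must cross the cut $\delta_{r,B}^{\mathsf{path}}(X)$ in at least one edge.

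For the inequality ``$\geq$'', I would first pass to the spanning subgraph $B'$ of $B$ consisting of the path-reachable edges.  Deleting edges cannot create new $r$-trails, so $B'$ is still edge-clean; moreover every $r$--$x$~path of $B$ already lies in $B'$, and the sets $\delta_{r,B}^{\mathsf{path}}(X)$ are unchanged since by definition they contain only path-reachable edges.  Hence I may assume that $B$ itself is path-reachable, in which case $B$ is also trail-reachable and the trail-skeleton $D := \trailskeleton{B}$ together with the lemmas of \cref{sec: edge decmp} become available.

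I would then split into two cases depending on whether $x$ is trail-solid.  If $x \in V(D)$, \cref{lem:disjoint_paths_skeleton} yields $\lambda_B^{\textrm{path}}(r,x) = \lambda_D(r,x)$, and \cref{thm:directed_strong_menger} provides a set $X \subseteq V(D) \setminus \{r\}$ with $x \in X$ and $|\delta_D(X)| = \lambda_D(r,x)$; I would lift this to $X_B := X \cup \bigcup_{i :\, c_i \in X} V(C_i) \subseteq V(B) \setminus \{r\}$.  If instead $x \in V(C_i) \setminus \{c_i\}$ for some trail-undirectable component $C_i$, then $\lambda_B^{\textrm{path}}(r,x) = 1$ by \cref{lem:disjoint_paths_skeleton_inside}, and choosing $X := V(C_i)$ yields $\delta_{r,B}^{\mathsf{path}}(X) = \{f_i\}$ by \cref{lem:component_properties}--\cref{itm:3.1i}.

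The main technical step is the verification that $|\delta_{r,B}^{\mathsf{path}}(X_B)| = |\delta_D(X)|$ in the first case.  Each trail-undirectable edge has both endpoints in the same component $C_i$, so by construction of $X_B$ either both endpoints of such an edge lie in $X_B$ or both lie outside; hence $\delta_{r,B}^{\mathsf{path}}(X_B)$ contains only trail-directable edges.  For every trail-directable edge, the unique orientation admitting an $r$-trail (and hence an $r$-path, using path-reachability of $B$) is the natural one, which sets up a bijection between $\delta_{r,B}^{\mathsf{path}}(X_B)$ and $\delta_D(X)$.  Once this identification is in place, the two cases combine to yield the desired inequality $\lambda_B^{\textrm{path}}(r,x) \geq \min |\delta_{r,B}^{\mathsf{path}}(X)|$.
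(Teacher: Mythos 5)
Your argument is correct and follows essentially the paper's own route: the paper likewise reduces to the path-reachable subgraph $B'$ and then, rather than re-running the skeleton case analysis with paths as you do, simply cites \cref{thm:strong_menger} together with the identity $\lambda_{B'}^{\textrm{path}}(r,x)=\lambda_{B'}^{\textrm{trail}}(r,x)$ (from \Cref{lem:disjoint_paths_skeleton,lem:disjoint_paths_skeleton_inside}) and the inclusion $\delta_{r,B'}^{\mathsf{path}}(X)\subseteq\delta_{r,B'}^{\mathsf{trail}}(X)$. The content is the same, and your inlined verification that $|\delta_{r,B}^{\mathsf{path}}(X_B)|=|\delta_D(X)|$ is a sound substitute for that shortcut.
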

\noindent
Also~\cref{thm:strong_menger_path} is false in general if we omit the condition of edge-cleanness~\cite{bowler2023mengertheorembidirected}*{Figure 2}.
    \begin{proof}
    We assume without loss of generality that for every $v \in V(B)$ there exists an $r$--$v$~path in $B$.
    Let $B'$ be the subgraph of $B$ induced by the path-reachable edges of $B$.
    Note that $V(B') = V(B)$ and that $B'$ is path-reachable.

    Given a path $P$ in $B$, the path $P$ witnesses that every edge of $P$ is path-reachable, so $P$ is a path of $B'$. Furthermore, since $B'$ is a subgraph of $B$, every path in $B'$ is a path in $B$. Therefore, $ \lambda_{B}^{\textrm{path}}(r, x) = \lambda_{B'}^{\textrm{path}}(r, x)$ and $\delta_{r, B}^{\mathsf{path}}(X) = \delta_{r, B'}^{\mathsf{path}}(X)$ for every $X \subseteq V(B) \setminus \{r\}$ with $x \in X$.
    Thus it suffices to show that
    \[\lambda_{ B'}^{\textrm{path}}(r, x) = \min \left\lbrace  \left| \delta_{r,B'}^{\mathsf{path}} (X )
            \right|\ :\ 
             x\in X 
             \subseteq V(B') \setminus \{ r\}\right\rbrace. \]

    For every set $\mathcal{P}$ of edge-disjoint $r$--$x$~paths in $B'$ and for every set $X \subseteq V(B') \setminus \{r\}$ with $x \in X$, the set $\delta_{r, B'}^{\mathsf{path}}(X)$ contains at least one edge from every path in $\mathcal{P}$, so
        \[ \lambda_{ B'}^{\textrm{path}}(r, x) \leq \min \left\lbrace  \left| \delta_{r,B'}^{\mathsf{path}} (X )
            \right|\ :\ 
             x\in X 
             \subseteq V(B') \setminus \{ r\}\right\rbrace.\]

    By applying~\Cref{lem:disjoint_paths_skeleton,lem:disjoint_paths_skeleton_inside} to the path-reachable $B'$, we obtain $\lambda_{B'}^{\textrm{path}}(r, x) = \lambda_{B'}^{\textrm{trail}}(r, x)$.
    Furthermore, by~\cref{thm:strong_menger}, $$\lambda_{B'}^{\textrm{trail}}(r, x) = \min \left\lbrace  \left| \delta_{r,B'}^{\mathsf{trail}} (X )
            \right|\ :\ 
             x\in X 
             \subseteq V(B')\setminus \{ r \}\right\rbrace.$$
    \noindent
    Next, let $X \subseteq V(B) \setminus \{r\}$.
    Since every path of $B'$ is also trail of $B'$, it follows that $\delta_{r, B'}^{\mathsf{path}}(X) \subseteq \delta_{r, B'}^{\mathsf{trail}}(X)$, so
    \begin{equation*}
            \min \left\lbrace  \left| \delta_{r,B'}^{\mathsf{trail}} (X )
            \right|\ :\ 
             x\in X 
             \subseteq V(B')\setminus \{ r \}\right\rbrace \geq \min \left\lbrace  \left| \delta_{r,B'}^{\mathsf{path}} (X )
            \right|\ :\ 
             x\in X 
             \subseteq V(B')\setminus \{ r \}\right\rbrace,
    \end{equation*} 
    which completes the proof.    
\end{proof}

\begin{corollary} \label{thm:menger_edge}
    Let $B$ be an edge-clean bidirected graph rooted at $r$ and let $x \in V(B)$. Then the maximum number of edge-disjoint $ r$--$ x $~paths ($r$--$x$~trails) in $ B $ is equal to the minimum size of an edge set $F \subseteq E(B)$ such that there is no $r$--$x$~path (no $r$--$x$~trail) in $B - F$. 
\end{corollary}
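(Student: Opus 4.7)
My plan is to obtain \cref{thm:menger_edge} as a straightforward consequence of the strong min-max identities already established in \cref{thm:strong_menger} and \cref{thm:strong_menger_path}. I would treat the trail version in detail; the path version runs in parallel.

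First, the easy direction ``max $\leq$ min''. I would let $\mathcal{P}$ be a maximum family of edge-disjoint $r$--$x$~trails and let $F \subseteq E(B)$ be any edge set for which $B - F$ contains no $r$--$x$~trail. Each $T \in \mathcal{P}$ must meet $F$ in at least one edge (otherwise $T$ would survive in $B - F$), and these witnessing edges are pairwise distinct by edge-disjointness, so $|\mathcal{P}| \leq |F|$. Taking the minimum over admissible $F$ gives the desired inequality, and exactly the same argument works for paths.

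For the other direction ``max $\geq$ min'', I would invoke \cref{thm:strong_menger} to choose a set $X$ with $x \in X \subseteq V(B) \setminus \{r\}$ satisfying $\lambda_B^{\textrm{trail}}(r,x) = |\delta_{r,B}^{\mathsf{trail}}(X)|$. The crucial step is then to verify that $F := \delta_{r,B}^{\mathsf{trail}}(X)$ is itself a blocking set. If not, some $r$--$x$~trail $T$ would survive in $B - F$; since $r \notin X$ and $x \in X$, there would be a first edge $e$ of $T$ whose tail lies outside $X$ and whose head lies in $X$. The initial segment of $T$ up to and including $e$, viewed inside the ambient $B$, would be an $r$--$\ve$~trail, which by definition forces $e \in \delta_{r,B}^{\mathsf{trail}}(X) = F$, contradicting the fact that $T$ uses no edge of $F$. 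Hence the minimum blocking $F$ has size at most $|\delta_{r,B}^{\mathsf{trail}}(X)| = \lambda_B^{\textrm{trail}}(r,x)$, and combining this with the first paragraph yields equality. The path case is identical, using \cref{thm:strong_menger_path} and the observation that initial segments of paths are again paths (they cannot repeat vertices).

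The only step that demands any attention is the final verification that $\delta_{r,B}^{\mathsf{trail}}(X)$ (respectively $\delta_{r,B}^{\mathsf{path}}(X)$) really blocks; one must notice that the witnessing initial segment lives in the ambient graph $B$ rather than in $B - F$, so that the defining clause of the $\delta$-set applies. Once this is flagged the argument is entirely routine, and no new combinatorial input beyond the strong Menger theorems is needed.
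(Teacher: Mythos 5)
Your proposal is correct and is exactly the derivation the paper intends: the corollary is stated without proof precisely because it follows from \cref{thm:strong_menger} and \cref{thm:strong_menger_path} by the routine observation that $\delta_{r,B}^{\mathsf{trail}}(X)$ (resp.\ $\delta_{r,B}^{\mathsf{path}}(X)$) is a blocking set, which is what you verify. The only point worth making explicit is that for the initial segment of $T$ up to the first crossing edge to qualify as an $r$--$\ve$~trail in the paper's sense (an $r$-trail, i.e.\ not revisiting $r$ internally), one uses edge-cleanness via \cref{lem:edge-clean_characterisation}; with that noted, the argument is complete.
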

\noindent
We remark that this section provides a simple proof for the variant of Menger's theorem that was discovered in~\cite{bowler2023mengertheorembidirected}*{Theorem 5.1}.

\section{Application: Flames} \label{sec:edge-flame}

An  $r$-rooted directed graph $D=(V,E)$ is a \emph{flame} if for every $v\in V\setminus \{r\}$ the local edge-connectivity $\lambda_D(r,v)$, i.e.\ the maximum number of edge-disjoint $r$--$v$~paths, is equal to the in-degree of $v$. Lovász proved that every rooted directed graph admits a flame as a spanning subgraph witnessing all local edge-connectivities from the root:
    \begin{theorem}[Lovász, \cite{lovasz1973connectivity}*{Theorem 2}]\label{t: LovFlame}
    Every $r$-rooted directed graph $D$ admits an $r$-rooted flame $ F $ with $\lambda_F(r,v)=\lambda_D(r,v)$ for every $ v\in V(D) \setminus \{ r \} $.
   \end{theorem}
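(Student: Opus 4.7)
The plan is to proceed by induction on $|E(D)|$; the base case $E(D) = \emptyset$ is trivial since then $F := D$ satisfies $\lambda_F(r,v) = 0 = \lambda_D(r,v)$ for every $v \neq r$. For the inductive step, if $D$ is already a flame then $F := D$ works. Otherwise, some vertex $v \neq r$ satisfies $d^-_D(v) > \lambda_D(r,v)$, and it will suffice to produce an edge $e$ entering $v$ such that $\lambda_{D-e}(r,w) = \lambda_D(r,w)$ for every $w \in V(D) \setminus \{r\}$: the induction hypothesis applied to $D - e$ then delivers a spanning flame $F$ of $D-e$, and hence of $D$, with the correct local edge-connectivities.

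The substantive step is to locate such an edge $e$, which I would do by an uncrossing argument. By the directed strong form of Menger's theorem (\cref{thm:directed_strong_menger}), choose $X \subseteq V(D) \setminus \{r\}$ inclusion-minimal with $v \in X$ and $|\delta_D(X)| = \lambda_D(r,v)$. Because $|\delta_D(X)| = \lambda_D(r,v) < d^-_D(v)$, at least one in-neighbour $u$ of $v$ lies in $X$; let $e$ be the corresponding edge from $u$ to $v$. I claim that $D - e$ preserves all local connectivities from $r$.

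Suppose not: then $\lambda_{D-e}(r,w) < \lambda_D(r,w)$ for some $w$, and \cref{thm:directed_strong_menger} yields $Y \subseteq V(D) \setminus \{r\}$ with $w \in Y$ and $|\delta_{D-e}(Y)| < \lambda_D(r,w) \leq |\delta_D(Y)|$. This forces $e \in \delta_D(Y)$, hence $v \in Y$, $u \notin Y$, and $|\delta_D(Y)| = \lambda_D(r,w)$. Now $v \in X \cap Y$ and $w \in X \cup Y$, and neither set contains $r$, so \cref{thm:directed_strong_menger} gives $|\delta_D(X \cap Y)| \geq \lambda_D(r,v)$ and $|\delta_D(X \cup Y)| \geq \lambda_D(r,w)$. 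Combined with the standard submodular inequality $|\delta_D(X \cap Y)| + |\delta_D(X \cup Y)| \leq |\delta_D(X)| + |\delta_D(Y)|$, these force equality throughout; in particular, $X \cap Y$ is again a tight cut for $v$. But $u \in X \setminus Y$ shows $X \cap Y \subsetneq X$, contradicting the minimality of $X$.

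The hard part is precisely this uncrossing: one must choose $X$ so that its inclusion-minimality rules out every potential decrease in local connectivity caused by deleting an edge into $v$. Once the claim above is established, the induction bookkeeping is routine and gives the desired spanning flame.
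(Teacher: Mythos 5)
The paper does not prove this statement at all: it is imported verbatim as Theorem~2 of Lovász's 1973 paper, so there is no internal proof to compare against. Your argument is correct and is essentially the classical proof of the flame theorem: repeatedly delete a superfluous in-edge while preserving all local edge-connectivities from $r$, locating that edge via an inclusion-minimal tight in-cut and ruling out any damage by uncrossing with submodularity of the in-degree function. The key step checks out: if deleting $e=uv$ with $u\in X$ dropped $\lambda(r,w)$, the resulting tight set $Y$ must satisfy $v\in Y$, $u\notin Y$ and $|\delta_D(Y)|=\lambda_D(r,w)$, and then $\lambda_D(r,v)+\lambda_D(r,w)\le |\delta_D(X\cap Y)|+|\delta_D(X\cup Y)|\le |\delta_D(X)|+|\delta_D(Y)|=\lambda_D(r,v)+\lambda_D(r,w)$ forces $X\cap Y$ to be a strictly smaller tight cut for $v$ (strict because $u\in X\setminus Y$), contradicting minimality. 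The induction bookkeeping (edge deletion preserves the vertex set, so the resulting flame is spanning and witnesses the original connectivities) is also fine.
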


   For a generalization of~\cref{t: LovFlame} to edge-disjoint paths in bidirected graphs, the definition of flames cannot be transferred literally:
    On the one hand, the graph in \cref{fig:flamesB} shows that it is not sensible to consider edge-connectivity but rather signed edge-connectivity, i.e.\ the maximum number $\lambda_B^{\alpha,\textrm{path}}(r,v)$ of $r$-$(v,\alpha)$~paths for $\alpha \in \{+,-\}$.
    On the other hand, the bidirected graph in \cref{fig:flamesA} has a vertex $x$ of low signed edge-connectivity but many `ingoing' edges, i.e.\ edges in which an $r$--$x$~path ends, that are necessary to witness all local signed edge-connectivities. Thus for a statement like \cref{t: LovFlame} flames should not be defined by a local but rather by a global bound on the number of edges. 

    % For a generalization of~\cref{t: LovFlame} to bidirected graphs the definition of flames cannot be transferred literally:
    % On the one hand, the bidirected graph in \cref{fig:flamesA} has a vertex $v$ of low connectivity but many `ingoing' edges, i.e.\ edges in which an $r$--$v$~path ends, that are necessary to witness all local connectivities. Thus for a statement like \cref{t: LovFlame} flames should not be defined locally but rather by a global bound on the number of edges.
    % On the other hand, the graph in \cref{fig:flamesB} shows that it is not sensible to bound the number of edges by the sum of all connectivities but rather by the sum of all signed connectivities, i.e.\ $\lambda^{\alpha, \textrm{ path}}(r,v)$ for $\alpha \in \{+,-\}$.

    A directed graph $D$ is a flame if and only if the number of edges of $D$ whose head is not $r$ is bounded by $\sum_{v \in V(D) \setminus \{r\}} \lambda_D(r,v)$. In this way we generalize flames to bidirected graphs: 
    An $r$-rooted bidirected graph $B$ is a \emph{flame} if the number of edges of $B$ is bounded by $\sum_{v \in V(B) \setminus \{r\}} \sum_{\alpha \in \{+,-\}} \lambda_B^{\alpha \textrm{, path}}(r,v)$.
    We show that every edge-clean bidirected graph has a flame that witnesses all signed connectivities:

   % We note that a digraph $D$ is a flame if and only if the number of edges of $D$ whose head is not $r$ is bounded by $\sum_{v \in V(D) \setminus \{r\}} \lambda_D(r,v)$. The literal generalization of \cref{t: LovFlame} fails for bidirected graphs (see \cref{fig:example_flame}) but one can still ask what is a minimal number of edges we need to keep to preserve the local edge-connectivities from the root.
   % Let us define \emph{flames} in bidirected graphs by a bound on the number of edges:
   % An $r$-rooted bidirected graph $B$ is a \emph{flame} if the number of edges of $B$ is bounded by $\sum_{v \in V(B) \setminus \{r\}} \sum_{\alpha \in \{+,-\}} \lambda_B^{\alpha \textrm{, path}}(r,v)$.

\begin{figure}
\centering
\begin{subfigure}[b]{0.49\textwidth}
\centering
\begin{tikzpicture}
\coordinate (r) at (0,0);
\coordinate (a) at (1.5,0);
\coordinate (b) at (3,0);
\coordinate (c) at (4.5,0.73);
\coordinate (d) at (4.5,-0.73);

%vertices with labels
\foreach \i in {r,b,c,d}{
    \node[dot] at (\i) [label=above:$\i$] {};
}
\foreach \i in {a,b,c,d}{
    \node[dot] at (\i) [] {};
}

%edges
\foreach \i/\j in {b/c}{
    \draw[thick] (\i) to (\j);
}
\foreach \i/\j in {r/a,d/b}{
    \draw[-e+8,thick] (\i) to (\j);
}
\draw [-e+8,thick] (a) ..controls +($-0.5*(a)+0.5*(b)+(0,-0.4)$) .. (b);
\draw [thick] (b) ..controls +($-0.5*(b)+0.5*(a)+(0,+0.4)$) .. (a);

%invisible nodes to align figures
\node[draw=none, fill=none] at (0,-1) {};
\node[draw=none, fill=none] at (0,1.7) {};

\end{tikzpicture}
    \caption{Both `ingoing' edges at $b$ are necessary to witness the connectivies at $c$ and $d$, but $\lambda_B^{\textrm{path}}(r,b)=1$.
    Furthermore, all five edges of the bidirected graph $B$ are necessary to witness the local edge-connectivities, but $\sum_{v \in V(B) \setminus \{r\}} \lambda_B^{\textrm{path}}(r,v) = 4$. %Moreover, $\sum_{v \in V(B) \setminus \{r\}} \sum_{\alpha \in \{+,-\}} \lambda_B^{\alpha, \textrm{ path}}(r,v) = 5$.
    }
    \label{fig:flamesB}
\end{subfigure}
\hfill
\begin{subfigure}[b]{0.49\textwidth}
\centering
\begin{tikzpicture}
\coordinate (r) at (0,0);
\coordinate (a) at (1,0);
\coordinate (b) at (2.5,0);
\coordinate (c) at (3.5,0);
\coordinate (d) at (4.5,0);
\coordinate (x) at (2,1);

%vertices with labels
\foreach \i in {r,x}{
    \node[dot] at (\i) [label=above:$\i$] {};
}
\foreach \i in {a,b,c,d}{
    \node[dot] at (\i) [] {};
}

%edges
\foreach \i/\j in {x/b,x/c,x/d}{
    \draw[thick] (\i) to (\j);
}
\foreach \i/\j in {r/a, a/x}{
    \draw[-e+8,thick] (\i) to (\j);
}
\draw [-e+8,thick] (a) ..controls +($-0.5*(a)+0.5*(b)+(0,-0.4)$) .. (b);
\draw [-e+8,thick] (a) ..controls +($-0.5*(a)+0.5*(c)+(0,-0.8)$) .. (c);
\draw [-e+8,thick] (a) ..controls +($-0.5*(a)+0.5*(d)+(0,-1.2)$) .. (d);

%invisible nodes to align figures
\node[draw=none, fill=none] at (0,-1) {};
\node[draw=none, fill=none] at (0,1.7) {};

\end{tikzpicture}
    \caption{There are four `ingoing' edges at $x$ and $\lambda_B^{\textrm{path}}(r,x) \leq \lambda_B^{+, \textrm{ path}}(r,x) + \lambda_B^{-, \textrm{ path}}(r,x) = 2$. \newline \newline \newline}
    \label{fig:flamesA}
\end{subfigure}
    \caption{Two bidirected graphs in which every edge is necessary to witness all local signed edge-connectivities.}
\label{fig:flames}
\end{figure}

    \begin{restatable}{theorem}{EdgeFlameBidirected} \label{thm:edge_flame_bidirected}
    Every $r$-rooted edge-clean bidirected graph $B$ admits an $r$-rooted flame $ F $ with $\lambda_F^{\alpha, \textrm{ path}}(r,v)=\lambda_B^{\alpha, \textrm{ path}}(r,v)$ for every $ v\in V(B) \setminus \{ r \} $ and $\alpha \in \{+,-\}$.
   \end{restatable}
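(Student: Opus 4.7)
The plan is to leverage Lov\'asz's directed flame theorem via the trail-skeleton. Without loss of generality $B$ is path-reachable: deleting non-path-reachable edges changes neither $\lambda_B^{\alpha, \textrm{ path}}(r,v)$ nor edge-cleanness. Applying~\cref{t: LovFlame} to $D := \trailskeleton{B}$ yields a spanning flame $F_D \subseteq D$ with $\lambda_{F_D}(r,v) = \lambda_D(r,v)$ for every $v \in V(D)\setminus\{r\}$. Lift $F_D$ back to a set $E^{\mathrm{dir}} \subseteq E(B)$ of trail-directable edges, and note that $f_i \in E^{\mathrm{dir}}$ for every trail-undirectable component $C_i$, because $\lambda_D(r,c_i)=1$ by~\cref{lem:disjoint_paths_skeleton_inside} forces each $f_i$ into $F_D$.

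For each trail-undirectable component $C_i$, the goal is to select $E_i' \subseteq E(C_i)$ so that $\{f_i\}\cup E_i'$ realises every signed path-reachability inside $C_i$ and $|E_i'|\leq |T_i|$, where
\[
T_i := \{(v,\alpha) : v\in V(C_i)\setminus\{c_i\},\ \lambda_B^{\alpha, \textrm{ path}}(r,v)=1\}.
\]
With this in hand, $F$ is taken to be the spanning sub-bidirected graph of $B$ with edge set $E^{\mathrm{dir}} \cup \bigcup_i E_i'$. Signed path-connectivity agrees with that of $B$ on trail-solid vertices via~\cref{lem:disjoint_paths_skeleton,lem:paths_in_path_reachable}, and on non-trail-solid vertices directly by construction. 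The flame inequality splits accordingly: trail-solid non-root vertices contribute exactly $|E^{\mathrm{dir}}|$ to the right-hand side, using~\cref{lem:trail-solid-one-sign} to equate $\sum_\alpha \lambda_B^{\alpha, \textrm{ path}}(r,v)$ with $\lambda_D(r,v)$, while the non-trail-solid vertices of $V(C_i)$ contribute $|T_i|$.

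To construct $E_i'$, I would form the auxiliary digraph $\hat H_i$ on vertex set $\{s,(c_i,\sigma(f_i,c_i))\}\cup T_i$, with an entry arc $s\to(c_i,\sigma(f_i,c_i))$ encoding $f_i$, and for every $e=uv\in E(C_i)$ with $\sigma(e,u)=\alpha,\sigma(e,v)=\beta$ the two signed-traversal arcs $(u,-\alpha)\to(v,\beta)$ and $(v,-\beta)\to(u,\alpha)$, whenever both endpoints lie in $\hat H_i$. Every vertex of $\hat H_i$ is $s$-reachable: any simple bidirected $r$-$(v,\alpha)$ path in $B$ traces a directed walk in $\hat H_i$, and its signed prefixes are themselves path-reachable and so lie in $T_i$. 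A spanning $s$-arborescence $A_i$ in $\hat H_i$ has $|T_i|+1$ arcs; one is the $f_i$-arc, and the remaining $|T_i|$ arcs project onto at most $|T_i|$ edges of $C_i$, which I take to be $E_i'$.

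The hard part will be ensuring that the projection of each $s$-$(v,\alpha)$ path in $A_i$ is a \emph{simple} bidirected path in $B$ rather than merely a walk, since a priori $A_i$ could pass through both signed copies $(w,+)$ and $(w,-)$ of some $w\in V(C_i)$ on a single root-to-target branch. I plan to handle this by choosing $A_i$ inductively in BFS order from $s$: when processing a target $(v,\alpha)$, fix a simple bidirected $r$-$(v,\alpha)$ path $P_{v,\alpha}$ in $B$ (which exists by hypothesis) and attach $(v,\alpha)$ in $A_i$ via the final arc of $P_{v,\alpha}$; because every proper signed prefix of $P_{v,\alpha}$ is itself a path-reachable target, it already lies in $T_i$ and has been processed, so an induction on BFS distance should show that the $s$-$(v,\alpha)$ path in $A_i$ projects to a simple bidirected path.
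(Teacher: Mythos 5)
Your overall strategy coincides with the paper's: apply Lov\'asz's theorem to the trail-skeleton, lift the resulting flame back to trail-directable edges, and supplement it inside each trail-undirectable component $C_i$ with a set of at most $|T_i|$ edges realising all signed path-reachabilities there. The reduction to the path-reachable case, the identification of $f_i$ inside the lifted flame, and the final counting are all sound. However, the entire difficulty of the theorem sits in the step you yourself flag as ``the hard part'', and the resolution you sketch does not close the gap.

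The problem is this. When you process $(v,\alpha)$ and attach it below $(u,-\gamma)$ via the last arc of $P_{v,\alpha}$, the $s$--$(u,-\gamma)$ path already present in $A_i$ projects to some simple path $Q$ that was frozen when $(u,-\gamma)$ was processed, built from $P_{u,-\gamma}$ and its ancestors; it is \emph{not} the prefix of $P_{v,\alpha}$. For the projected path to $(v,\alpha)$ to be simple you need $v\notin V(Q)$, and nothing in the induction provides this: the existence of \emph{some} simple path to $(u,-\gamma)$ avoiding $v$ (namely $P_{v,\alpha}$ minus its last edge) says nothing about the one fixed in the tree. Worse, a single tree path to $(u,-\gamma)$ must serve as a prefix for all of its descendants simultaneously, each imposing its own vertex to avoid, and it is not clear that any consistent choice of the $P_{v,\alpha}$ exists; you cannot appeal to a shortest-path-tree argument because the prefix of a shortest \emph{simple} signed path need not be a shortest simple signed path to its endpoint (this is precisely the trail-versus-path pathology of bidirected graphs). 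There is also a secondary ordering issue: the penultimate signed vertex of $P_{v,\alpha}$ need not precede $(v,\alpha)$ in BFS order on $\hat H_i$, since BFS distance there measures shortest sign-alternating walks rather than simple paths. The paper avoids the arborescence structure altogether: it proves an ear-decomposition theorem (\cref{thm:ear_decomposition}, \cref{cor:edge-accessible-subgraph}) in which the two signed copies of an internal ear vertex are reached from the two different ends of the ear, and the bound $|E(R)|\leq\sum\sigma$ is rescued by the fact that the pivot vertex $w$ of each ear is reachable with both signs. To complete your argument you would need to prove a statement equivalent to \cref{cor:edge-accessible-subgraph}, and the arborescence construction as described does not do so.
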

   \noindent
   We note that~\cref{thm:edge_flame_bidirected} is false in general for non-edge-clean bidirected graphs (see~\cref{fig:flame_edge-clean}).
    For the proof of~\cref{thm:edge_flame_bidirected} in~\cref{subsec:existence_edge_flames}, we will need not only our earlier decomposition but also an additional tool - the existence of a kind of `ear-decomposition' for bidirected graphs, which we now investigate in~\cref{subsec:new_ear_decomposition}.

    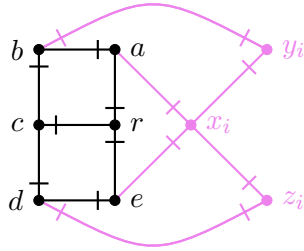
\begin{figure}
        \centering
        \begin{tikzpicture}
\coordinate (r) at (1,1);
\coordinate (a) at (1,2);
\coordinate (b) at (0,2);
\coordinate (c) at (0,1);
\coordinate (d) at (0,0);
\coordinate (e) at (1,0);
\coordinate (x_i) at (2,1);
\coordinate (y_i) at (3,2);
\coordinate (z_i) at (3,0);

%edges
\foreach \i/\j in {r/c,c/b,b/a,a/r,c/d,d/e,e/r}{
    \draw[-e+6,thick] (\i) to (\j);
}
\foreach \i/\j in {a/x_i,e/x_i,x_i/y_i,x_i/z_i}{
    \draw[-e+8,thick,LavenderMagenta] (\i) to (\j);
}
\foreach \i/\j in {d/z_i}{
    \draw [-e+8,thick,LavenderMagenta] (\i) ..controls +($-0.5*(\i)+0.5*(\j)+(0,-0.8)$) .. (\j);
    \draw [-e+8,thick,LavenderMagenta] (\j) ..controls +($-0.5*(\j)+0.5*(\i)+(0,-0.8)$) .. (\i);
}
\foreach \i/\j in {y_i/b}{
    \draw [-e+8,thick,LavenderMagenta] (\i) ..controls +($-0.5*(\i)+0.5*(\j)+(0,0.8)$) .. (\j);
    \draw [-e+8,thick,LavenderMagenta] (\j) ..controls +($-0.5*(\j)+0.5*(\i)+(0,0.8)$) .. (\i);
}

%vertices with labels
\foreach \i in {b,c,d}{
    \node[dot] at (\i) [label=left:$\i$] {};
}
\foreach \i in {r,a,e}{
    \node[dot] at (\i) [label=right:$\i$] {};
}
\foreach \i in {x_i,y_i,z_i}{
    \node[dot,LavenderMagenta] at (\i) [label={[text=LavenderMagenta]right:$\i$}] {};
}

\end{tikzpicture}
        \caption{
        Let $B$ be the bidirected graph obtained from fives copies of the graph in the figure by identifying the black edges and black vertices. All edges are necessary to witness all signed connectivities of $B$.
        Furthermore,
        $\sum_{v \in \{a,b,c,d,e,f\}} \sum_{\alpha \in \{+,-\}} \lambda_B^{\alpha, \textrm{ path}}(r,v) = 11$ and $\sum_{v \in \{x_i,y_i,z_i\}} \sum_{\alpha \in \{+,-\}} \lambda_B^{\alpha, \textrm{ path}}(r,v) = 5$ for $i \in [5]$. There are seven black edges and for every $i \in [5]$ there exist six purple edges.
        }
        \label{fig:flame_edge-clean}
    \end{figure}

    \subsection{Ear-decomposition} \label{subsec:new_ear_decomposition}
    Let $B$ be a bidirected graph, let $(v, \alpha) \in \mathcal{V}(B)$ be a signed vertex.
        A subgraph $H$ of $B$ is called {\em $(v, \alpha)$-accessible} if for every signed vertex $(w, \beta) \in \mathcal{V}(H)$ there is a $(v,\alpha)$--$(w, \beta)$~path in $B$ only if there exists a $(v,\alpha)$--$(w, \beta)$~path in $H$.
        Note that every $(v, \alpha)$-accessible subgraph containing the endpoints of all $(v, \alpha)$-paths in $B$ is $(v, \alpha)$-spanning.
        
Given a bidirected graph $B'$ which is a subgraph of the bidirected graph $B$, a path or a $v$--$v$ almost path of $B$ is called an {\em ear} of $B'$  if it intersects $V(B')$ precisely in its first and last vertex.
Furthermore, we call an edge $e$ a \emph{bone} of $B'$ if precisely one endpoint of $e$ is in $V(B')$.
\begin{theorem}\label{thm:ear_decomposition}
    Let $B$ be a bidirected graph and let $(v, \alpha) \in \mathcal{V}(B)$ be a signed vertex such that for every $w \in V(B)$ there is a $(v, \alpha)$--$w$~path. Then there exists a sequence $B_0, \dots, B_n$ of $(v, \alpha)$-accessible subgraphs of $B$ such that
    \begin{itemize}
        \item $B_i$ is obtained from $B_{i-1}$ by adding an ear or a bone of $B_{i-1}$ for every $i \in [n]$;
        \item  $V(B_i) \supsetneq V(B_{i-1})$ unless $V(B_{i-1})=V(B)$ for every $i \in [n]$;
                \item $B_0= \{r\}$; and
        \item $B_n = B$.
    \end{itemize}
\end{theorem}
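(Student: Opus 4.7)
The plan is to prove the theorem by induction on $|V(B)|$. The base case $|V(B)| = 1$ is trivial: take $n = 0$ and $B_0 = \{v\}$. For the inductive step with $|V(B)| \geq 2$, the strategy is to delete a carefully chosen vertex $w \in V(B) \setminus \{v\}$, invoke the induction hypothesis on $B - w$, and reinsert $w$ via a single ear or bone, padding the sequence afterwards with length-one ears that add the remaining edges.

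For the deletion, I pick $w \in V(B) \setminus \{v\}$ maximising $d^*(w)$, the length of a shortest $(v, \alpha)$-$w$-path in $B$. The crucial observation is that $B - w$ still has every remaining vertex $(v, \alpha)$-reachable: if some shortest $(v, \alpha)$-path to $x \neq w$ passed through $w$, its prefix up to $w$ would have length at most $d^*(x) \leq d^*(w)$, but by definition every $(v, \alpha)$-$w$-path has length at least $d^*(w)$; the inequalities collapse, forcing the suffix to be trivial and yielding $x = w$, a contradiction. Applying the induction hypothesis to $B - w$ yields a decomposition $B_0' = \{v\}, \dots, B_m' = B - w$; I set $B_i := B_i'$ for $i$ up to the smallest index $m^*$ with $V(B_{m^*}') = V(B - w)$.

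To insert $w$, I distinguish two cases by which signs at $w$ are $(v, \alpha)$-reachable in $B$. If only one sign $\beta$ is reachable, then every $(v, \alpha)$-$w$-path in $B$ terminates at an edge $e = (u, w)$ with $\sigma(e, w) = \beta$ and $u \in V(B - w)$; I add $e$ as a bone. If both signs are reachable, then there exist two edges $e^+ = (u^+, w)$ and $e^- = (u^-, w)$, necessarily distinct, with $\sigma(e^\pm, w) = \pm$ (each arising as the terminal edge of a $(v, \alpha)$-path to the corresponding signed vertex), and I add the ear $u^+, e^+, w, e^-, u^-$, which is a path if $u^+ \neq u^-$ and a $u^+$-$u^+$ almost path otherwise. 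Accessibility of $B_{m^* + 1}$ at each new signed vertex $(w, \gamma)$ then follows by concatenating a $B_{m^*}'$-path to $(u, -\sigma(e, u))$ (provided by the accessibility of $B_{m^*}'$, since the corresponding prefix of the $B$-path witnessing reachability avoids $w$) with the bidirected edge contributing sign $\gamma$ at $w$. After this step $V(B_{m^*+1}) = V(B)$, so all remaining edges of $B$ can be appended one by one as length-one ears, preserving accessibility trivially.

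The main obstacle is the second case of the insertion step: showing that when both signs at $w$ are reachable, a single ear of length two can simultaneously secure both while respecting the bidirected trail rules and the ear definition. The observations that $e^+$ and $e^-$ must be distinct (since their signs at $w$ differ), that their other endpoints lie in $V(B - w)$ (by looplessness of $B$), and that the sign pattern at $w$—arrival with one sign via $e^+$ and departure with the opposite sign via $e^-$—is automatically compatible with the trail rule, together ensure this construction is always available.
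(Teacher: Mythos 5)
There is a genuine gap in the inductive step, and it is not repairable within your framework. The notion of $(v,\alpha)$-accessibility is defined relative to the \emph{ambient} graph: $H$ must contain a $(v,\alpha)$--$(u,\beta)$~path whenever $B$ does. When you apply the induction hypothesis to $B-w$, the subgraphs $B_0',\dots,B_m'$ are only guaranteed to be accessible with respect to $B-w$. A signed vertex $(u,\beta)$ with $u\neq w$ may be reachable in $B$ only via paths passing through $w$; your shortest-path argument rules this out for \emph{one} sign at $u$ (the one attained by a shortest path) but not for the other. In that situation no subgraph of $B-w$ containing $u$ is $(v,\alpha)$-accessible in $B$, so every sequence that builds up $B-w$ before reinserting $w$ must contain an inadmissible member. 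Concretely, take $V(B)=\{v,a,w,u\}$ with edges $e_1=va$, $e_2=aw$, $e_3=wu$, $e_4=vu$, signed so that $v\,e_1\,a\,e_2\,w\,e_3\,u$ is a $(v,\alpha)$--$(u,+)$~path and $v\,e_4\,u$ is a $(v,\alpha)$--$(u,-)$~path (and $e_3$, $e_4$ are the only edges at $u$). Then $w$ maximises your $d^*$, and $(u,+)$ is reachable in $B$ but in $B-w$ only $(u,-)$ is; hence the first member of your sequence containing $u$ --- indeed even $B-w$ itself --- fails to be $(v,\alpha)$-accessible in $B$. Your verification of accessibility only at the new signed vertices $(w,\pm)$, and your final padding by single edges, never revisit signed vertices of $B-w$ whose $B$-reachability depends on $w$, so they do not close this hole.

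This is precisely why the paper does not induct on vertices. It takes a \emph{maximal} accessible subgraph $B'$ admitting a sequence and extends it by a bone or by one ear $R$ that may be long: $R$ is the terminal segment of a $(v,\alpha)$--$(w,-\beta)$~path starting from its last visit to $V(B')$, concatenated with the last edge of a $(v,\alpha)$--$(w,\beta)$~path. Every internal vertex of $R$ (not only $w$) then receives both of its reachable signs at once, by walking $R$ from either end, so vertices are only ever added together with witnesses for everything $B$ can reach at them. Your length-two ears cannot achieve this, and insisting that all of $V(B)\setminus\{w\}$ be present before $w$ is exactly what breaks: in the example above the correct first step is a single long ear $v\,e_4\,u\,e_3\,w\,e_2\,a\,e_1\,v$, which adds $u$, $w$ and $a$ simultaneously.
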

\begin{proof}
Let $B'$ be a maximal $(v,\alpha)$-accessible subgraph of $B$ for which there exists a sequence as desired.
Suppose for a contradiction that $B'$ is a proper subgraph of $B$, i.e.\ $E(B) \setminus E(B') \neq \emptyset$.
We can assume that $V(B') \neq V(B)$, since otherwise every edge of $E(B) \setminus E(B')$ is a $B'$-ear, contradicting the choice of $B'$.

Let $P$ be a $(v,\alpha)$-path in $B$ ending in some vertex of $V(B) \setminus V(B')$. Let $w$ be the first vertex of $P$ in $V(B) \setminus V(B')$ and let $\beta \in \{+,-\}$ such that $Pw$ ends in $(w, \beta)$.
Note that the last edge $e$ of $Pw$ is a bone of $B'$ and that there is a $(v, \alpha)$--$(w,\beta)$~path in $B' + e$ since $B'$ is $(v, \alpha)$-accessible.
If there is no $(v, \alpha)$--$(w,-\beta)$~path in $B$, then by adding the bone $e$ to $B'$ we obtain a $(v, \alpha)$-accessible graph, a contradiction to the choice of $B'$.

Otherwise, there exists a $(v, \alpha)$-$(w, -\beta)$~path $Q$ in $B$.
Let $x$ be its last vertex in $V(B')$.
Note that the concatenation $R$ of $xQ$ and $e$ is an ear of $B'$ with at least one internal vertex.
We show that $B' \cup R$ is $(v, \alpha)$-accessible, which contradicts the choice of $B'$.

Since $B'$ is $(v, \alpha)$-accessible it suffices to consider internal vertices of $R$. Let $y$ be some internal vertex of $R$ and let $\gamma \in \{+, - \}$ arbitrary. Since $B'$ is $(v, \alpha)$-accessible, there exists a (possibly trivial) $(v, \alpha)$-path of $B'$ that concatenates with either $Ry$ or $R^{-1}y$ to a $(v, \alpha)$-$(y,\gamma)$~path. This shows that $B' \cup R$ is accessible and completes the proof.
\end{proof}

We set $\sigma_B^{\alpha, \beta}(v,w) :=1$ if there is a $(v, \alpha)$--$(w, \beta)$~path in $B$ and otherwise $\sigma_B^{\alpha, \beta}(v,w) :=0$ for every signed vertex $(w, \beta) \in \mathcal{V}(B)$.

Given a bidirected graph $B$ and a signed vertex $(v, \alpha) \in \mathcal{V}(B)$ we say that a subgraph $H$ of $B$ is \emph{$(v,\alpha)$-spanning} if for every signed vertex $(w, \beta) \in \mathcal{V}(B)$ for which there is a $(v, \alpha)$--$(w, \beta)$~path in $B$, there exists a $(v, \alpha)$--$(w, \beta)$~path in $H$.

\begin{corollary}
\label{cor:edge-accessible-subgraph}
    Let $B$ be a bidirected graph and let $(v, \alpha) \in \mathcal{V}(B)$ be a signed vertex.
    Then there is a $(v,\alpha)$-spanning subgraph $B'$ of $B$ with $|E(B')| \leq \sum_{w \in V(B) \setminus \{v\}} \sum_{\beta \in \{+,-\}} \sigma_B^{\alpha, \beta}(v,w)$.
\end{corollary}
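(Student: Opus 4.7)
The plan is to invoke Theorem~\ref{thm:ear_decomposition} and take $B'$ to be the subgraph obtained just after the last vertex-adding step of an ear-decomposition.  First I reduce to the case where every vertex of $B$ lies on some $(v,\alpha)$-path, by deleting vertices and edges not on any such path; this alters neither $\sigma_B^{\alpha,\beta}(v,w)$ for any $(w,\beta)$ nor what it means for a subgraph to be $(v,\alpha)$-spanning, and it guarantees the hypothesis of Theorem~\ref{thm:ear_decomposition}.

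Applying the theorem gives a sequence $B_0=\{v\},B_1,\dots,B_n=B$ of $(v,\alpha)$-accessible subgraphs in which each step $B_{i-1}\to B_i$ adds an ear or a bone of $B_{i-1}$, and $V(B_i)\supsetneq V(B_{i-1})$ whenever $V(B_{i-1})\neq V(B)$.  Let $m$ be the largest index with $V(B_m)\supsetneq V(B_{m-1})$; then $V(B_m)=V(B)$ and every step after $m$ adds only a trivial ear (one with no internal vertices).  Set $B':=B_m$.  Since $\mathcal{V}(B')=\mathcal{V}(B)$ and $B'$ is $(v,\alpha)$-accessible, $B'$ is in fact $(v,\alpha)$-spanning.

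To bound $|E(B')|$, let $b$, $L$, $K$ denote respectively the number of bones among the first $m$ steps, the number of non-trivial ears, and the total number of internal vertices of those ears.  A bone contributes one new vertex and one edge, while a non-trivial ear with $k\ge 1$ internal vertices contributes $k$ new vertices and $k+1$ edges; hence $|E(B')|=b+K+L$ and $|V(B')|-1=b+K$.  Inspecting the proof of Theorem~\ref{thm:ear_decomposition} shows that bones are added only for new vertices $w$ with exactly one sign reachable from $(v,\alpha)$ in $B$, whereas non-trivial ears are added only when both signs of the new vertex are reachable and are constructed so that each internal vertex is traversable along the ear in both directions from $(v,\alpha)$, hence has both signs reachable in $B$.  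Thus the right-hand side of the desired inequality equals $b\cdot 1+K\cdot 2=b+2K$, and since every non-trivial ear contains at least one internal vertex we have $L\le K$, giving $|E(B')|=b+K+L\le b+2K$, as required.

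The only real subtlety is the last step: the classification of bones as ``one-sign'' steps and of non-trivial ears as ``both-sign, bidirectionally traversable'' steps is not visible from the statement of Theorem~\ref{thm:ear_decomposition} but follows directly from the case distinction in its constructive proof.  If preferred, the argument can instead be carried out by re-running that inductive construction inside the proof of the corollary itself, with the same edge-counting bookkeeping.
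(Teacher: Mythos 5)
Your proof is correct and follows essentially the same route as the paper: reduce to the case where every vertex lies on a $(v,\alpha)$-path, apply Theorem~\ref{thm:ear_decomposition}, truncate the sequence at the first spanning stage, and charge each bone to its one reachable signed vertex and each nontrivial ear's $k+1$ edges to the $2k$ reachable signed vertices of its internal vertices. The paper phrases this count as an induction over the steps rather than global bookkeeping, and is in fact terser than you are about the key point you rightly flag -- that the two-signs-reachable property of ear-internal vertices comes from the construction inside the proof of Theorem~\ref{thm:ear_decomposition}, not from its statement.
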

\begin{proof}
    Up to deletion of some vertices, we can assume that every vertex of $B$ is an endpoint of some $(v,\alpha)$-path.
    Let $B_0, \dots, B_n$ be as in~\cref{thm:ear_decomposition} and let $j $ be minimal with $V(B_j) = V(B)$. We show by induction on $i$ that $|E(B_i)| \leq \sum_{w \in V(B_i) \setminus \{v\}} \sum_{\beta \in \{+,-\}} \sigma_B^{\alpha, \beta}(v,w)$ for every $i\leq j$. Then $B_j$ is as desired.

    For $i=0$ this clearly holds because $|E(B_0)|=  0$. Let $i>0$. Since $V(B_i) \neq V(B_{i-1})$, $B_i$ is obtained from $B_{i-1}$ by adding an ear with at least one internal vertex or a bone. In both cases $|E(B_i) \setminus E(B_{i-1})| \leq \sum_{w \in V(B_i) \setminus V(B_{i-1})} \sum_{\beta \in \{+,-\}} \sigma_B^{\alpha, \beta}(v,w)$.
\end{proof}
        
\subsection{Existence of flames} \label{subsec:existence_edge_flames}

\EdgeFlameBidirected*
\begin{proof}
    Let $D$ be the trail-skeleton of $B$ and $C_1, \hdots, C_k$ be trail-undirectable components of $B$.
    Let $S_i$ be a $(c_i, \alpha_i)$-spanning subgraph of $B[C_i]$ given by~\cref{cor:edge-accessible-subgraph} for each $i \in \{1, \hdots, k\}$.
    Let $\hat{F}$ be the rooted flame for $D$ given by~\cref{t: LovFlame}. We claim that there exists a mapping $m: E(D)\to E(B)$, such that the subgraph $F$ of $B$ induced by $m(E(\hat{F})) \cup \bigcup_{i=1}^k E(S_i)$ is a desired flame.

    We prove that $\lambda_F^{\beta, \textrm{ path}}(r,v)=\lambda_B^{\beta, \textrm{ path}}(r,v)$ for every $v \in V(B) \setminus \{r\}$ and every $\beta \in \{+,-\}$.
    Let $v \in V(B) \setminus \{r\}$ be arbitrary. Assume first that $v \in V(D)$. Then $\lambda_D(r, v) = \lambda_B^{+, \textrm{ path}}(r, v) + \lambda_B^{-, \textrm{ path}}(r, v)$ by \Cref{lem:disjoint_paths_skeleton,lem:trail-solid-one-sign}.
    Let $\mathcal{P}$ be a set of $\lambda_D(r, v)$ edge disjoint $r$--$v$~paths in $\hat{F}$, which exists by~\cref{t: LovFlame}. 
    Now, by \Cref{cor:trail-skeleton-edge-disjoint,lem:paths_in_path_reachable}, there is a set $\mathcal{P}'$ of $\lambda_D(r, v)$ edge disjoint $r$--$v$~paths in $F$. 
    Since at most $\lambda_B^{+, \textrm{ path}}(r, v)$ paths of $\mathcal{P}'$ are $r$--$(v, +)$ paths and at most $\lambda_B^{-, \textrm{ path}}(r, v)$ paths of $\mathcal{P}'$ are $r$--$(v, -)$ paths, the set $\mathcal{P}'$ witnesses that $\lambda_{F}^{\beta, \textrm{ path}}(r, v) = \lambda_{B}^{\beta, \textrm{ path}}(r, v)$ for $ \beta \in \{+, -\}$. 
    
    Next assume that $v \in V(B) \setminus V(D)$ and let $\beta \in \{+,-\}$ be arbitrary. By the construction of the trail-skeleton $D$, there is $i \in [k]$ with $v \in V(C_i)$ and $\lambda_B^{\beta, \textrm{ path}}(r, v) \leq 1$ by~\cref{lem:disjoint_paths_skeleton_inside}. 
    If $\lambda_B^{\beta, \textrm{ path}}(r, v) = 0$, we are done.
    Thus we can assume $\lambda_B^{\beta, \textrm{ path}}(r, v) = 1$ and let $P$ be an $r$--$(v, \beta)$~path. By \cref{lem:component_properties}, $P$ contains $c_i$ and the subpath $c_iP$ starts in $(c_i, \alpha_i)$ and is contained in $C_i$.
    This implies that $\lambda_B^{-\alpha_i, \textrm{ path}}(r,c_i) \geq 1$ and there is a $(c_i, \alpha_i)$--$(v,\beta)$~path in $C_i$.
    By choice of $\hat{F}$ and by~\cref{lem:disjoint_paths_skeleton_inside}, there is $r$--$c_i$~path $Q$ in $F$.
    Furthermore, by~\cref{lem:component_properties}, $Q$ ends in $(c_i,-\alpha_i)$ and intersects $V(C_i)$ only in its endpoint.
    Since the subgraph $S_i$ of $C_i$ is $(c_i, \alpha_i)$-spanning, there is a $(c_i, \alpha_i)$--$(v, \beta)$~path $Q'$ in $S_i$.
    Then the concatenation of $Q$ and $Q'$ is the desired path in $F$.
    This shows that $\lambda_F^\beta(r,v)=\lambda_B^\beta(r,v)$ for every $v \in V(B) \setminus \{r\}$ and every $\beta \in \{+,-\}$.

    Finally, we show that $F$ is a flame.
    By the choice of $S_i$ and by~\cref{lem:component_properties}, we have
    \[
    |E(S_i)| \leq \sum_{v \in V(C_i) \setminus V(D)} \sum_{\beta \in \{+,-\}} \sigma_{C_i}^{\alpha_i, \beta}(c_i, v) \leq \sum_{v \in V(C_i) \setminus V(D)} \sum_{\beta \in \{+,-\}} \lambda_B^{\beta, \textrm{ path}}(r,v).
    \]
    Since each vertex of $V(B) \setminus V(D)$ is contained in precisely one $C_i$, we can derive
    \[
    \left| E\left( \bigcup_{i=1}^{k} S_i\right)  \right|  \leq \sum_{v \in V(B) \setminus V(D)} \sum_{\beta \in \{+,-\}} \lambda_B^{\beta, \textrm{ path}}(r,v).
    \]
    The choice of $\hat{F}$ ensures that $|E(\hat{F})| \leq \sum_{v \in V(D) \setminus \{r\}} \lambda_D(r,v)$, which implies by~\Cref{lem:disjoint_paths_skeleton,lem:trail-solid-one-sign} that
    \[
    |E(\hat{F})| \leq \sum_{v \in V(D) \setminus \{r\}} \sum_{\beta \in \{+, -\}} \lambda_B^{\beta, \textrm{ path}}(r,v).
    \] 
    This proves \[|E(F)| \leq \sum_{v \in V(B) \setminus \{r\}} \sum_{\beta \in \{+,-\}} \lambda_F^{\beta, \textrm{ path}}(r,v). \]
    Thus $F$ is a flame, which completes the proof.
\end{proof}

\section{Application: Pym's theorem} \label{sec:edge-pym}
Pym's theorem \cite{pym_original}, also called the linkage theorem, is another classical result related to connectivity in graphs. Originally, it was formulated in terms of vertex-connectivity:

\begin{theorem}[\cite{pym_original}]\label{thm:pym_directed}
    Let $D$ be a directed graph, let $X$ and $Y$ be sets of vertices, and let $\mathcal{P}$ and $\mathcal{Q}$ be sets of vertex-disjoint paths in $D$ that start in $X$ and end in $Y$.
    Then there is a set $\mathcal{R}$ of vertex-disjoint $X$--$Y$ paths in $D$ such that the set of the first vertices of $\mathcal{P}$ is included in the set of the first vertices of $\mathcal{R}$ and the set of the last vertices of $\mathcal{Q}$ is included in the set of the last vertices of $\mathcal{R}$.
\end{theorem}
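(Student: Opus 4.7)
The plan is to prove Pym's theorem by an iterative augmentation argument in the spirit of the standard proof of Menger's theorem. Let $X_{\mathcal P}$ denote the set of initial vertices of paths in $\mathcal P$ and $Y_{\mathcal Q}$ the set of terminal vertices of paths in $\mathcal Q$. I would start by setting $\mathcal R := \mathcal P$, which automatically satisfies the condition on initial vertices, and then modify $\mathcal R$ in successive stages so that every vertex of $Y_{\mathcal Q}$ eventually appears as a terminal vertex of some path in $\mathcal R$, while preserving both vertex-disjointness and the starts condition.

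For the augmentation step, I would pick some $y \in Y_{\mathcal Q}$ that is not yet a terminal vertex of any path in the current $\mathcal R$, pick the path $Q \in \mathcal Q$ ending at $y$, and examine the directed multigraph $H$ with edge multiset $E(\mathcal R) \cup E(Q)$. Since the paths in $\mathcal R$ are internally vertex-disjoint and $Q$ is a single path, every vertex of $H$ has in-degree and out-degree at most $2$. I would then perform an alternating backward search from $y$, using edges of $Q$ and of $\mathcal R$ alternately, until the search reaches a vertex that can safely be ``released'' --- either the initial vertex of $Q$, or an initial vertex of some $\mathcal R$-path that does not lie in $X_{\mathcal P}$. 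Rerouting along this alternating walk yields a new system $\mathcal R'$ with one more vertex of $Y_{\mathcal Q}$ appearing as a terminal vertex, no terminal vertex of $\mathcal Q$ already present lost, and $X_{\mathcal P}$ still contained in the starts of $\mathcal R'$. Termination is automatic since $|Y_{\mathcal Q} \cap \mathrm{ends}(\mathcal R)|$ strictly increases at each step and is bounded above by $|\mathcal Q|$.

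The main obstacle is proving that such an alternating walk always exists and that the reroute produces a valid set of vertex-disjoint paths. The degree bound of $2$ in $H$ is the key combinatorial constraint: whenever the search arrives at an internal vertex from one ``colour'' of edges, there is always a well-defined continuation from the other colour, so the search can only terminate at a source or sink of the appropriate type. To ensure vertex-disjointness of the new system one must cut at the \emph{first} conflict encountered along the alternating walk rather than continuing all the way; formalising this choice, and verifying that the required source or sink always appears before a bad conflict, is the technical heart of the proof. An alternative to this direct combinatorial argument would be to construct an auxiliary digraph by splitting vertices into in/out copies and applying a flow/Menger argument with forced terminals, but the augmenting approach above keeps the constructive nature of the theorem transparent.
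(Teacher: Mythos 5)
The paper does not prove this statement at all: it is quoted as Pym's linkage theorem and attributed to the original source, so you are attempting to reprove a known but genuinely nontrivial result from scratch. An augmenting-walk strategy is a legitimate route in principle, but as written your proposal has a real gap, which you yourself flag: essentially the entire content of Pym's theorem sits in the step you defer.

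Concretely, the unproved claim is that a single augmentation step adds $y$ to the set of secured endpoints \emph{without losing any endpoint secured earlier}, while also keeping $X_{\mathcal P}$ among the starts and keeping the paths pairwise vertex-disjoint. The naive reroute --- truncate the first $\mathcal R$-path $R$ met by the backward walk along $Q$ at the meeting vertex $z$ and append the tail $zQ$ --- discards the old tail of $R$, so $\mathrm{ter}(R)$ is lost; if $\mathrm{ter}(R)$ was already a secured vertex of $Y_{\mathcal Q}$, your potential function does not increase. To avoid this, the tails of all paths touched by the walk must be cyclically reassigned rather than discarded, and you must then show (i) that the reassigned segments concatenate into genuine pairwise vertex-disjoint paths (the alternating walk may revisit vertices, and ``cutting at the first conflict'' can strand a secured tail), (ii) that the walk terminates at a releasable vertex rather than cycling, and (iii) that releasing the initial vertex of an $\mathcal R$-path not starting in $X_{\mathcal P}$ does not itself delete a secured endpoint, since such a path typically ends at some $\mathrm{ter}(Q') \in Y_{\mathcal Q}$. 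None of (i)--(iii) is established, and the in/out-degree-at-most-$2$ observation alone does not yield them. Either carry out this analysis in full (for instance by maintaining the invariant that every path of $\mathcal R$ is an initial segment of a $\mathcal P$-path followed by a terminal segment of a $\mathcal Q$-path, taking $\mathcal R$ extremal with respect to the secured endpoints, and deriving a contradiction from an unsecured $y$), or develop your alternative suggestion of a flow argument with lower bounds on the forced terminals --- or simply cite Pym, as the paper does.
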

\noindent
By applying \cref{thm:pym_directed} to the directed line graph one obtains its edge-variant:
\begin{theorem}\label{thm:edge-pym_directed}
    Let $D$ be a directed graph rooted at $r$, let $x \in V(D) \setminus \{r\}$ and let $\mathcal{P}$ and $\mathcal{Q}$ be sets of edge-disjoint $r$--$x$~paths in $D$.
    Then there is a set $\mathcal{R}$ of edge-disjoint $r$--$x$ paths in $D$ covering the first edges of the paths in $\mathcal{P}$ and the last edges of the paths in $\mathcal{Q}$.
\end{theorem}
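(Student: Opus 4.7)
The plan is to derive the theorem from \cref{thm:pym_directed} by working in a suitable modification of the directed line graph of $D$. Let $L(D)$ denote the usual directed line graph: its vertices are the edges of $D$, and there is a directed edge from $e$ to $f$ whenever the head of $e$ equals the tail of $f$. Let $X$ be the set of edges of $D$ with tail $r$ and let $Y$ be the set of edges with head $x$. Under this correspondence, edge-disjoint $r$--$x$~paths in $D$ translate to vertex-disjoint $X$--$Y$~paths in $L(D)$, so $\mathcal{P}$ and $\mathcal{Q}$ become sets $\mathcal{P}'$ and $\mathcal{Q}'$ of vertex-disjoint $X$--$Y$~paths in $L(D)$. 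A naive application of \cref{thm:pym_directed} to $L(D)$ is not quite enough: the resulting paths might pass through $X$ or $Y$ internally, corresponding to trails in $D$ that revisit $r$ or $x$, and such trails cannot always be shortened to $r$--$x$~paths while preserving both their first and last edges.

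To prevent this I would modify $L(D)$ by deleting every edge whose head lies in $X$ and every edge whose tail lies in $Y$; call the result $L'$. In any $X$--$Y$~path of $L'$, vertices of $X$ can occur only as the initial vertex and vertices of $Y$ only as the terminal one. The paths in $\mathcal{P}'$ and $\mathcal{Q}'$ already have this property (since the corresponding paths in $D$ visit $r$ only at the start and $x$ only at the end), so they survive the deletion and remain vertex-disjoint $X$--$Y$~paths in $L'$. Applying \cref{thm:pym_directed} to $L'$, $X$, $Y$, $\mathcal{P}'$ and $\mathcal{Q}'$ then yields vertex-disjoint $X$--$Y$~paths $\mathcal{R}'$ in $L'$ whose first vertices include the first vertices of $\mathcal{P}'$ and whose last vertices include the last vertices of $\mathcal{Q}'$. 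Each $R' \in \mathcal{R}'$ translates back to an $r$--$x$~trail $T_{R'}$ in $D$, and by the construction of $L'$ this trail visits $r$ only at its start and $x$ only at its end.

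The standard shortcut procedure (repeatedly replacing the segment between two occurrences of a repeated vertex by a single visit to that vertex) then converts each $T_{R'}$ into an $r$--$x$~path $R$ in $D$; since neither $r$ nor $x$ is repeated along $T_{R'}$, no shortcut disturbs the first or the last edge of the trail. Setting $\mathcal{R} := \{R : R' \in \mathcal{R}'\}$ gives the required family: edge-disjointness in $D$ is inherited from vertex-disjointness in $L'$ together with the fact that shortcuts only discard edges, and the first-edge and last-edge coverage follow from the corresponding first-vertex and last-vertex conditions for $\mathcal{R}'$. The main obstacle I anticipate is precisely the one motivating the detour through $L'$ rather than $L(D)$: guaranteeing that the trails produced by Pym's theorem can be trimmed to paths without sacrificing the prescribed endpoint edges, a subtle point that a one-line appeal to the line graph construction would gloss over.
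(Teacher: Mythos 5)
Your proof is correct and follows exactly the route the paper intends, namely applying \cref{thm:pym_directed} to the directed line graph (the paper gives no further detail than this one-line remark, so your careful treatment of the trail-to-path trimming is a welcome elaboration). Note only that the detour through $L'$ is not strictly necessary: since the paper defines an $X$--$Y$~path to meet $X$ and $Y$ \emph{only} in its first and last vertex, the paths produced by \cref{thm:pym_directed} in the unmodified line graph already correspond to trails in $D$ that visit $r$ and $x$ only at their ends, so the shortcut argument goes through there as well.
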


The verbatim transfer of \cref{thm:edge-pym_directed} to bidirected graphs fails (see~\cref{fig:pym}). In this section, we use our understanding of bidirected graphs to generalize~\cref{thm:edge-pym_directed} to edge-clean bidirected graphs:

\begin{figure}
    \centering
    \begin{tikzpicture}
\coordinate (r) at (0,0.5);
\coordinate (a) at (1.5,0);
\coordinate (b) at (1.5,1);
\coordinate (x) at (3,0.5);

%orange and magenta highlight
\draw[line width=9pt, LavenderMagenta, line cap=round] (r) to ($(a)+(0.05,0)$);
\draw[line width=9pt, LavenderMagenta, line cap=round] (x) to ($(b)+(0.05,0)$);
\draw[line width=9pt, LavenderMagenta, line cap=round] ($(a)+(0.05,0)$) to ($(b)+(0.05,0)$);

\draw[line width=9pt, PastelOrange, line cap=round] (r) to ($(b)+(-0.05,0)$);
\draw[line width=9pt, PastelOrange, line cap=round] (x) to ($(a)+(-0.05,0)$);
\draw[line width=9pt, PastelOrange, line cap=round] ($(a)+(-0.05,0)$) to ($(b)+(-0.05,0)$);

%edges
\foreach \i/\j in {a/b,b/a}{
    \draw[-e+8,thick] (\i) to (\j);
}
\foreach \i/\j in {x/a,x/b,a/r,b/r}{
    \draw[thick] (\i) to (\j);
}

%vertices with labels
\foreach \i in {r,x}{
    \node[dot] at (\i) [label=below:$\i$] {};
}
\foreach \i in {a,b}{
    \node[dot] at (\i) [] {};
}
%edge labels
\node[] at ($0.5*(r)+0.5*(a)+(0,-0.4)$) [] {$e$};
\node[] at ($0.5*(a)+0.5*(x)+(0,-0.4)$) [] {$f$};
\end{tikzpicture}
    \caption{A rooted bidirected graph $B$ in which there is an $r$--$x$~paths that starts in $e$ and one that ends in $f$. There are no two edge-disjoint $r$--$x$~paths in $B$ and no $r$--$x$~path in $B$ contains $e$ and $f$.}
    \label{fig:pym}
\end{figure}
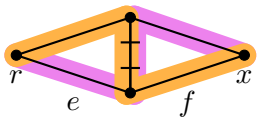

        \begin{theorem}\label{thm:edge-pym-bidirected}
            Let $B$ be an edge-clean bidirected graph rooted at $r$, let $x \in V(B) \setminus \{r\}$ and
            let $\mathcal{P}$ and $\mathcal{Q}$ be sets of edge-disjoint $r$--$x$~paths in $B$. Then there is a set $\mathcal{R}$ of edge-disjoint $r$--$x$ paths in $B$ covering the first edges of the paths in $\mathcal{P}$ and the last edges of the paths in $\mathcal{Q}$.
        \end{theorem}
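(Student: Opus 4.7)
The plan is to reduce the statement to the directed version of Pym's theorem (\cref{thm:edge-pym_directed}) applied to the trail-skeleton $\trailskeleton{B}$, with a separate direct argument for the case where $x$ lies inside a trail-undirectable component.

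First I would restrict to the subgraph of $B$ induced by the path-reachable edges, exactly as in the opening of the proof of \cref{thm:strong_menger_path}: this preserves edge-cleanness and does not affect $\mathcal{P}$, $\mathcal{Q}$, or the existence of $\mathcal{R}$, so assume $B$ is path-reachable.

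\emph{Case 1: $x$ is incident only with trail-directable edges} (so $x \in V(\trailskeleton{B})$ and $x$ is not any contracted vertex $c_i$). Then every $r$--$x$~path both starts and ends with a trail-directable edge: the first by edge-cleanness, the last by the assumption on $x$. For any such path $P$, the projection $\mathbb{o}(P)$ from \cref{lem:bijection_trail_to_trail_skeleton1} is in fact an $r$--$x$~\emph{path} in $\trailskeleton{B}$ with the same first and last underlying edge as $P$: any trail-undirectable component $C_i$ is entered at most once along $P$ (the unique trail-directable entry $f_i$ from \cref{lem:component_properties} appears at most once on the path $P$), so $c_i$ appears at most once among the vertices of $\mathbb{o}(P)$; trail-solid vertices outside any $C_i$ are clearly visited at most once. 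Hence $\mathbb{o}(\mathcal{P})$ and $\mathbb{o}(\mathcal{Q})$ are sets of edge-disjoint $r$--$x$~paths in $\trailskeleton{B}$ (by \cref{rem:trail-skeleton-edge-disjoint}), and \cref{thm:edge-pym_directed} provides a set $\widetilde{\mathcal{R}}$ of edge-disjoint $r$--$x$~paths in $\trailskeleton{B}$ covering the first edges of $\mathbb{o}(\mathcal{P})$ and the last edges of $\mathbb{o}(\mathcal{Q})$. Lifting each $\widetilde{R} \in \widetilde{\mathcal{R}}$ via \cref{lem:paths_in_path_reachable} gives an $r$--$x$~path in $B$; the first underlying edge is preserved because $r \notin V(C_i)$ for any $i$ (so the lifting starts with the actual $B$-edge of the first directed edge), and the last underlying edge is preserved by the explicit construction in that lemma (the lifting of the final directed edge ends with the $B$-edge having the same underlying edge). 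The resulting paths in $B$ are pairwise edge-disjoint by \cref{cor:trail-skeleton-edge-disjoint}.

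\emph{Case 2: $x$ is incident with a trail-undirectable edge}, so $x \in V(C_i)$ for some trail-undirectable component $C_i$. By \cref{lem:component_properties}--\cref{itm:3.1i}, every $r$--$x$~path in $B$ must enter $C_i$ via the unique trail-directable edge $f_i$, so any two such paths share $f_i$; hence $|\mathcal{P}|, |\mathcal{Q}| \leq 1$. If either set is empty, take $\mathcal{R}$ to be the other. Otherwise write $\mathcal{P}=\{P\}$ with $P = P_1 \circ \vf_i \circ P_2$ and $\mathcal{Q}=\{Q\}$ with $Q = Q_1 \circ \vf_i \circ Q_2$, where $P_1, Q_1$ are the initial $r$-to-(tail of $f_i$) segments and $P_2, Q_2$ are $c_i$-to-$x$ paths inside $C_i$ (trivial if $x = c_i$). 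Setting $R := P_1 \circ \vf_i \circ Q_2$ yields a valid trail (signs at $c_i$ match by validity of $Q$, signs at the tail of $f_i$ match by validity of $P$) that is actually a path ($V(P_1 \circ \vf_i)$ meets $V(C_i)$ only in $c_i$, whereas $V(Q_2) \subseteq V(C_i)$), starts with the first edge of $P$, and ends with the last edge of $Q$; so $\mathcal{R} := \{R\}$ works.

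The main obstacle is Case 2: one cannot just project and lift there because the trail-skeleton collapses $C_i$ to a single vertex and loses the information distinguishing $x$ from $c_i$. The remedy is to notice that the rigidity imposed by the unique entry $f_i$ forces $|\mathcal{P}|, |\mathcal{Q}| \leq 1$, so the needed path can be assembled by hand by splicing $P_1$ with $Q_2$ across $c_i$. Everything else follows routinely from the structural lemmata in \cref{sec: edge decmp}.
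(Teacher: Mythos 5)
Your proposal is correct and follows essentially the same route as the paper: project $\mathcal{P}$ and $\mathcal{Q}$ to the trail-skeleton, apply the directed edge version of Pym's theorem there, and lift back via \cref{lem:paths_in_path_reachable} and \cref{cor:trail-skeleton-edge-disjoint}, with a separate degenerate argument when $x$ lies in a trail-undirectable component (where uniqueness of the entry edge $f_i$ forces $|\mathcal{P}|,|\mathcal{Q}|\leq 1$). The only cosmetic difference is the case split — the paper treats $x=c_i$ through the skeleton while you splice $P_1\circ\vf_i\circ Q_2$ by hand, and your Case~2 spells out the splicing that the paper compresses into a citation of \cref{lem:component_properties} — but the underlying argument is the same.
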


        \begin{proof}
            Let $D$ be the trail-skeleton of $B$, and $f$ be a bijection from $E(D)$ to a subset $F\subseteq E(B)$.
            If $x \in V(B) \setminus V(D)$, then by the construction of $D$, $x$ is contained in some trail-undirectable component of $B$, which means $\lambda_B^{\textrm{path}}(r,x) = 1$ by \cref{lem:disjoint_paths_skeleton_inside}. It implies that $|\mathcal{P}|=|\mathcal{Q}|=1$ and provides the desired set $\mathcal{R}$ by \cref{lem:component_properties}. 
            Thus we can assume that $x \in V(D)$. The definition of edge-cleanness and \cref{lem:component_properties} imply that, for every $r$--$x$ path $P$ in $B$, the first and last edge of $P$ is in $F$.
            By \Cref{lem:bijection_trail_to_trail_skeleton1,rem:trail-skeleton-edge-disjoint}, there exist sets $\Tilde{\mathcal{P}}, \Tilde{\mathcal{Q}}$ of edge-disjoint $r$--$x$~paths in $D$ such that $f$ induces a bijection between the first edges of $\mathcal{P}$ and the first edges of $\Tilde{\mathcal{P}}$, and between the last edges of $\mathcal{\mathcal{Q}}$ and the last edges of $\Tilde{\mathcal{Q}}$. 

            By~\cref{thm:edge-pym_directed}, there exists a set $\Tilde{\mathcal{R}}$ of edge-disjoint $r$--$x$~paths in $D$ such that $\Tilde{\mathcal{R}}$ covers the first edges of $\Tilde{\mathcal{P}}$ and the last edges of $\Tilde{\mathcal{Q}}$. It provides the desired set $\mathcal{R}$ by~\Cref{lem:bijection_trail_to_trail_skeleton2,lem:paths_in_path_reachable,cor:trail-skeleton-edge-disjoint}.
        \end{proof}

\section{From edge-disjoint to vertex-disjoint}\label{sec: from edge to vertex}
In this section we generalize the results of~\Cref{sec:edge-menger,sec:edge-flame,sec:edge-pym} to (internally) vertex-disjoint paths in \emph{clean} bidirected graphs.
The main tool of this section is an auxiliary graph, which we define in~\cref{sec:aux-graph}.

Recall that a trail $T$ is an \emph{almost path} if by removing its last edge we obtain a path.
An $r$-rooted bidirected graph $B$ is \emph{clean} if there does not exist a nontrivial $r$--$r$~almost path.
Given a bidirected graph $B$ rooted at $r$, we call a vertex $v$ of $B$ {\em plain} if there exists $\alpha \in \{+,-\}$ such that there is no $r$--$(v, \alpha)$~path in $B$.
Let $\Pi_B \subseteq V(B)$ be the set of all plain vertices of $B$.
Given vertices $v,w \in V(B)$, let $\kappa_B(v,w)$ be the maximum number of internally vertex-disjoint $v$--$w$~paths. Similarly, we define $\kappa_B^\alpha(v,w)$ for some $\alpha \in \{+,-\}$.

\begin{lemma}\label{lem:non_plain_vertex}
    Let $B$ be a clean bidirected graph rooted at $r$. Then for every non-plain vertex $v$ of $B$ we have $\kappa_B(r,v)=1$.
\end{lemma}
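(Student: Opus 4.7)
My plan is to argue by contradiction: suppose $\kappa_B(r,v)\ge 2$, and let $P_1, P_2$ be two internally vertex-disjoint $r$--$v$~paths in $B$.

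First, if $P_1$ and $P_2$ arrive at $v$ with opposite signs, then $T:=P_1\circ P_2^{-1}$ is a valid $r$--$r$~trail: the sign condition at $v$ holds because the two incoming signs differ, and at every other vertex the trail condition is inherited from $P_1$ or $P_2$. Removing the last edge of $T$ yields the walk consisting of $P_1$ followed by all but the last edge of $P_2^{-1}$, whose vertices are pairwise distinct by internal vertex-disjointness of $P_1$ and $P_2$. Hence $T$ is a nontrivial $r$--$r$~almost path, contradicting the cleanness of $B$.

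So I may assume $P_1$ and $P_2$ both arrive at $v$ with the same sign $\sigma$. Since $v$ is non-plain, there is an $r$--$(v,-\sigma)$~path $Q$. The goal is to build (after possibly modifying the chosen paths) an internally vertex-disjoint pair of $r$--$v$~paths ending with opposite signs at $v$, thereby reducing to the previous case. If $Q$ is internally vertex-disjoint from one of $P_1,P_2$, then the pair $(P_i, Q)$ works directly. Otherwise $Q$ meets both $P_1$ and $P_2$ in internal vertices, and I would then choose such a $Q$ minimising $|V(Q)\cap V(P_1\cup P_2)\setminus\{r,v\}|$. Let $y$ be the last shared vertex along $Q$ (from $r$ towards $v$); WLOG $y\in V(P_1)\setminus\{r,v\}$. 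Since $V(P_1)\cap V(P_2)=\{r,v\}$, we have $y\notin V(P_2)$, and by the choice of $y$ the segment $yQv$ is internally disjoint from $V(P_1)\cup V(P_2)$.

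A sign analysis at $y$, using the trail conditions on $P_1$ and $Q$, splits into two subcases. In one, the concatenation $(rP_1y)\circ(yQv)$ is a valid trail; it is then an $r$--$(v,-\sigma)$~path internally vertex-disjoint from $P_2$ (its internal vertices lie either in $V(P_1)\setminus\{v\}$, and hence outside $V(P_2)\setminus\{r,v\}$, or outside $V(P_1)\cup V(P_2)$), reducing to the first case. In the complementary subcase, the alternative trail $(rP_1y)\circ(rQy)^{-1}$ is valid at $y$; by the minimality of $Q$ the segment $rQy$ cannot contain any further vertex of $V(P_1)\cup V(P_2)$, since any such intersection would allow a rerouting of $Q$ through $P_1$ or $P_2$ producing an $r$--$(v,-\sigma)$~path with strictly fewer shared vertices. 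Hence $V(rP_1y)\cap V(rQy)=\{r,y\}$ and $(rP_1y)\circ(rQy)^{-1}$ is itself a nontrivial $r$--$r$~almost path, contradicting cleanness.

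The main obstacle is making the minimality argument in the second subcase precise: the rerouting used to reduce the number of shared vertices must always yield a sign-valid $r$--$(v,-\sigma)$~path, which requires a secondary case analysis at the rerouting vertex analogous to the one carried out at $y$. I expect that the auxiliary graph introduced in the next subsection streamlines this bookkeeping by translating bidirected path-swapping into standard directed path-swapping; but even without it the argument goes through by induction on $|V(Q)\cap V(P_1\cup P_2)\setminus\{r,v\}|$.
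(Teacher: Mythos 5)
Your opening case (the two internally disjoint $r$--$v$~paths arrive at $v$ with opposite signs) is correct and is exactly the paper's first move, and your first subcase of the same-sign case (rerouting along $rP_1y$ and appending $yQv$) is also sound. The genuine gap is in your second subcase. The key claim there, $V(rQy)\cap V(rP_1y)=\{r,y\}$, is supported only by the assertion that any further intersection vertex $z$ would allow a rerouting of $Q$ through $P_1$ or $P_2$ with strictly fewer shared internal vertices. But any such rerouting replaces a segment of $Q$ by a segment of $P_1$ (or $P_2$), and \emph{every} internal vertex of that segment is counted by your potential $|V(Q)\cap V(P_1\cup P_2)\setminus\{r,v\}|$; if $rP_1z$ is long while $rQz$ meets $V(P_1)\cup V(P_2)$ only in $z$, the potential strictly increases. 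So the induction you propose is not well-founded with this measure, quite apart from the unresolved sign analysis at the rerouting vertex which you already flag (and the auxiliary graph of the next subsection cannot rescue this: the present lemma is needed to analyse that construction, not the other way around).

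The paper's proof avoids both problems with two different choices. It takes the opposite-sign path $R$ minimising the number of edges of $R$ \emph{outside} $E(P)\cup E(Q)$ --- a quantity that cannot increase when a segment of $R$ is replaced by a segment of $P$ or $Q$ --- and it splits not at the last shared vertex of $R$ but at the \emph{first} internal vertex $x$ of $Q$ (taken along $Q$) lying on $V(R)$, so that the initial segment $Qx$ meets $R$ only in $r$ and $x$ by construction. The sign dichotomy at $x$ then terminates in one step with no recursion: either $Qx$ followed by $(Rx)^{-1}$ is a nontrivial $r$--$r$~almost path, contradicting cleanness, or $Qx$ followed by $xR$ is an $r$--$(v,-)$~path with strictly fewer edges outside $E(P)\cup E(Q)$, contradicting the choice of $R$. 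To repair your argument you would need to replace your vertex-counting potential by such an edge-based one and move the splitting point so that the segment you keep is clean by construction.
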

\begin{proof}
Suppose not for a contradiction. We must have $\kappa_B^+(r,v) \geq 2$ or $\kappa_B^-(r,v) \geq 2$, since otherwise there are two internally disjoint $r$--$v$~paths that end in different signs at $v$ and thus their union is a nontrivial $r$--$r$~almost path. Since $v$ is not plain, up to symmetry of $+$ and $-$, we have $\kappa_B^+(r,v) \geq 2$ and $\kappa_B^-(r,v) \geq 1$. Let $P$ and $Q$ be two internally vertex-disjoint $r$--$(v,+)$~paths. Furthermore, let $R$ be some $r$--$(v,-)$~path with minimum number of edges in $E(B) \setminus (E(P) \cup E(Q))$. If $R$ is internally vertex-disjoint to either $P$ or $Q$, then either $Rv P^{-1}$ or $Rv Q^{-1}$ is a nontrivial $r$--$r$~almost path, contradicting that $B$ is clean.

Thus we can assume that $R$ contains internal vertices of $P$ and $Q$. Further we can assume by symmetry that the first internal vertex of $R$ in $V(P) \cup V(Q)$ is contained in $V(P)$. Let $x$ be the first internal vertex of $Q$ in $V(R)$. Then $Rx$ contains an edge of $E(B) \setminus (E(P) \cup E(Q))$. Either $QxR^{-1}$ is an $r$--$r$~almost path or $QxR$ is a path. Note that the former case contradicts that $B$ is clean and the latter case contradicts the choice of $R$ since $QxR$ has fewer edges in $E(B) \setminus (E(P) \cup E(Q))$ than $R$, giving the desired contradiction.
\end{proof}

\subsection{Auxiliary graph} \label{sec:aux-graph}

Now, we construct an auxiliary bidirected graph $a(B)$ from $B$ as follows (see~\cref{fig:aux-graph}). For every vertex $v$ in $\Pi_B$, we replace $v$ with two vertices $v^+$ and $v^-$ and an edge between $v^+$ and $v^-$ with sign $+$ at $v^-$ and sign $-$ at $v^+$. Furthermore, every edge $e$ incident with $v$ with sign $+$ in $B$ becomes an edge incident with $v^+$ with sign $+$ in $a(B)$, and similarly every edge $e$ incident with $v$ with sign $-$ in $B$ becomes an edge incident with $v^-$ with sign $-$. Observe that $V(a(B)) = (V(B) \setminus \Pi_B) \cup \{v^+, v^- \mid v \in \Pi_B\}$ and $B$ is obtained from $a(B)$ by contracting every edge $v^+v^-$ with $v \in \Pi_B$.

\begin{figure}
\centering
\begin{subfigure}[b]{0.44\textwidth}
\centering
\begin{tikzpicture}
\coordinate (r) at (0,1.4);
\coordinate (a) at (1.4,2.8);
\coordinate (b) at (2.8,2.8);
\coordinate (c) at (4.2,2.8);
\coordinate (d) at (5.6,2.8);
\coordinate (e) at (2.8,1.4);
\coordinate (f) at (4.2,1.4);
\coordinate (g) at (5.6,1.4);
\coordinate (h) at (1.4,0);
\coordinate (i) at (2.8,0);
\coordinate (j) at (4.2,0);

%orange highlight
\foreach \i in {a,c,d,f,h,j}{
    \draw[line width=9pt, PastelOrange, line cap=round] (\i) to (\i);
}
%vertices with labels
\foreach \i in {a,b,c,d,e}{
    \node[dot] at (\i) [label=above:$\i$] {};
}
\foreach \i in {f}{
    \node[dot] at (\i) [label=below left:$\i$] {};
}
\foreach \i in {h,i,j,g,r}{
    \node[dot] at (\i) [label=below:$\i$] {};
}
%edges
\foreach \i/\j in {r/a, h/e, e/j, f/j, d/g}{
    \draw[thick] (\i) to (\j);
}
\foreach \i/\j in {r/h, h/i, i/e, j/i, c/f, b/c, c/b, b/f, f/b}{
    \draw[-e+8,thick] (\i) to (\j);
}
\foreach \i/\j in {a/b, b/a, f/g}{
     \draw [-e+8,thick] (\i) ..controls +($-0.5*(\i)+0.5*(\j)+(0,-0.4)$) .. (\j);
}
\foreach \i/\j in {b/a}{
    \draw [-e+8,thick] (\i) ..controls +($-0.5*(\i)+0.5*(\j)+(0,+0.4)$) .. (\j);
}
\foreach \i/\j in {f/g}{
    \draw [thick] (\i) ..controls +($-0.5*(\i)+0.5*(\j)+(0,+0.4)$) .. (\j);
}

\end{tikzpicture}
\caption{The bidirected graph $B$ rooted in $r$. The vertices of $\Pi_B$ are marked orange.}
\end{subfigure}
\hfill
\begin{subfigure}[b]{0.55\textwidth}
\centering
\begin{tikzpicture}
\coordinate (r) at (0,1.4);
\coordinate (a^-) at (1.4,2.8);
\coordinate (a^+) at (2.1,2.8);
\coordinate (b) at (3.5,2.8);
\coordinate (c^+) at (4.9,2.8);
\coordinate (c^-) at (5.6,2.8);
\coordinate (d^-) at (7,2.8);
\coordinate (d^+) at (6.3,2.8);
\coordinate (e) at (3.5,1.4);
\coordinate (f^+) at (4.9,1.4);
\coordinate (f^-) at (5.6,1.4);
\coordinate (g) at (7,1.4);
\coordinate (h^+) at (1.4,0);
\coordinate (h^-) at (2.1,0);
\coordinate (i) at (3.5,0);
\coordinate (j^-) at (4.9,0);
\coordinate (j^+) at (5.6,0);

%orange highlight
\foreach \i in {a,c,d,f,h,j}{
     \draw[line width=9pt, PastelOrange, line cap=round] (\i^+) to (\i^-);
}
%vertices with labels
\foreach \i in {a^+,a^-,b,c^-,c^+,d^-,d^+,e,f^-}{
    \node[dot] at (\i) [label=above:$\i$] {};
}
\foreach \i in {r,h^-,h^+,i,j^-,j^+,g,f^+}{
    \node[dot] at (\i) [label=below:$\i$] {};
}
\foreach \i in {a,c,d,f,h,j}{
    \draw[-e+4,thick] (\i^+) to (\i^-);
}
%edges
\foreach \i/\j in {r/a^-, h^-/e, e/j^-, f^-/j^-, d^-/g}{
    \draw[thick] (\i) to (\j);
}
\foreach \i/\j in {r/h^+, h^-/i, i/e, j^-/i, c^-/f^+, b/c^+, c^+/b, b/f^+, f^+/b}{
    \draw[-e+8,thick] (\i) to (\j);
}
\foreach \i/\j in {a^+/b, b/a^+, f^-/g}{
     \draw [-e+8,thick] (\i) ..controls +($-0.5*(\i)+0.5*(\j)+(0,-0.4)$) .. (\j);
}
\foreach \i/\j in {b/a^+}{
    \draw [-e+8,thick] (\i) ..controls +($-0.5*(\i)+0.5*(\j)+(0,+0.4)$) .. (\j);
}
\foreach \i/\j in {f^-/g}{
    \draw [thick] (\i) ..controls +($-0.5*(\i)+0.5*(\j)+(0,+0.4)$) .. (\j);
}
\end{tikzpicture}
\caption{The auxiliary graph $a(B)$. The auxiliary edges are marked orange.}
\end{subfigure}
\caption{A rooted bidirected graph $B$ and its auxiliary graph $a(B)$.}
\label{fig:aux-graph}
\end{figure}
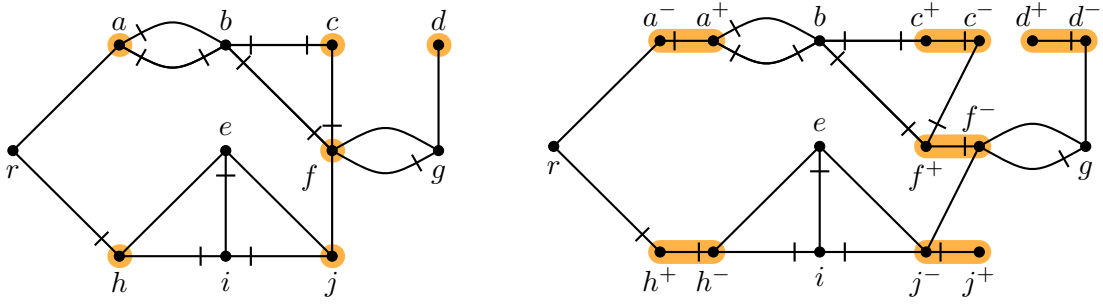

%
%Call edges of $a(B)$ whose ends are $v^+$ and $v^-$ for $v \in \Pi_B$ {\em auxiliary edges}. A path $P$ of $a(B)$ is called a {\em proper path} if the last edge of $P$ is not an auxiliary edge. Next, we define a bijection $g_B$ between proper $r$-paths of $a(B)$ and $r$-paths of $B$, where $g_B(P)$ is the path of $B$ formed from $P$ by contracting auxiliary edges. 

Call edges of $a(B)$ whose ends are $v^+$ and $v^-$ for $v \in \Pi_B$ {\em auxiliary edges}.  Next, we define a mapping $g_B$ between $r$-trails of $a(B)$ and $r$-trails of $B$, where for an $r$-trail $T$ of $a(B)$, the image $g_B(T)$ is the $r$-trail of $B$ formed from $T$ by contracting auxiliary edges.  

A path $P$ of $a(B)$ is called a {\em proper path} if the last edge of $P$ is not an auxiliary edge. Let $\mathcal{P}(a(B))$ be the set of proper $r$-paths of $a(B)$ and let $\mathcal{P}(B)$ be the set of $r$-paths of $B$.

\begin{lemma}\label{lem:aux_path_bijection}
    Let $B$ be a bidirected graph rooted at $r$ and let $a(B)$ be its auxiliary graph. Then $g_B$ is a bijection from $\mathcal{P}(a(B))$ to $\mathcal{P}(B)$. 
\end{lemma}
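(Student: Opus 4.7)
The plan is to construct an explicit inverse $l_B \colon \mathcal{P}(B) \to \mathcal{P}(a(B))$ to $g_B$ and verify that $g_B \circ l_B$ and $l_B \circ g_B$ are identities. The crux is a structural description of how a proper $r$-path of $a(B)$ interacts with each pair $\{v^+, v^-\}$ arising from a plain vertex $v \in \Pi_B$.

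Recall from the construction of $a(B)$ that at $v^+$ every non-auxiliary edge has sign $+$ and the auxiliary edge $v^+v^-$ has sign $-$ (and symmetrically at $v^-$). Hence the sign-alternation rule forces any trail that passes internally through $v^+$ to enter via one type of edge and leave via the other; in particular, it must use the auxiliary edge. Since the auxiliary edge can appear at most once in a trail, I would first show that for every $T \in \mathcal{P}(a(B))$ and every $v \in \Pi_B$, exactly one of the following holds:
\begin{enumerate}
    \item neither $v^+$ nor $v^-$ appears on $T$;
    \item both $v^+$ and $v^-$ are internal vertices of $T$, appearing consecutively and connected by the auxiliary edge;
    \item exactly one of $v^+,v^-$ appears on $T$, as its terminal vertex, entered via a non-auxiliary edge.
\end{enumerate}
The case ``terminal via the auxiliary edge'' is the one ruled out by properness; all other combinations are ruled out by sign alternation at $v^+$ or $v^-$ together with the fact that each of $v^+$ and $v^-$ is incident to exactly one auxiliary edge.

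Given this trichotomy, well-definedness of $g_B$ follows immediately: contracting the auxiliary edge in case (ii) merges $v^+$ and $v^-$ into the single vertex $v$ of $B$, sign alternations translate correctly because the auxiliary edge carries opposite signs at its two endpoints, and no vertex of $B$ is repeated, so $g_B(T) \in \mathcal{P}(B)$. Conversely, given $P \in \mathcal{P}(B)$, the map $l_B$ is defined by replacing each internal occurrence of a plain vertex $v$ (entered with sign $\alpha$ and left with sign $-\alpha$, as forced by sign alternation) by the segment $v^{\alpha}$, auxiliary edge, $v^{-\alpha}$; if $P$ ends at a plain vertex $v$ with arriving sign $\alpha$, then $l_B(P)$ ends at $v^{\alpha}$; non-plain vertices are left untouched. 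One then checks that $l_B(P) \in \mathcal{P}(a(B))$ (no vertex is repeated because $P$ has none and the splittings are per-passage, sign alternation is preserved by construction, and the last edge is non-auxiliary), that $g_B(l_B(P)) = P$ by inspection, and that $l_B(g_B(T)) = T$ by invoking the trichotomy.

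The main obstacle is establishing the trichotomy cleanly; once that is available, every remaining check is routine bookkeeping about signs and contraction.
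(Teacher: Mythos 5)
Your proposal is correct and follows essentially the same route as the paper: the inverse map $l_B$ you construct is exactly the preimage the paper builds to prove surjectivity, and your trichotomy makes explicit the sign-alternation facts at $v^+$ and $v^-$ that the paper uses implicitly. The only organizational difference is that the paper proves injectivity directly, by comparing two distinct proper $r$-paths at their first point of divergence and exhibiting a non-auxiliary edge lying on one image but not the other, whereas you obtain injectivity from $l_B \circ g_B = \mathrm{id}$; both checks are routine once the trichotomy (or its equivalent) is in hand.
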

\begin{proof}
First we show that $g_B$ is injective. Let $P$ and $Q$ be two distinct proper $r$-paths of $a(B)$. Since $P$ and $Q$ are distinct, up to symmetry between $P$ and $Q$, there is a (possibly trivial) path $P'$ and an edge $\ve_1$ such that $P'$ is an initial segment of $Q$, and $P' \circ \ve_1$ is an initial segment of $P$, and $e_1$ does not appear in $Q$. 

Suppose $Q = P'$. Let $e_2$ be the last edge of $P$. Since $P$ is proper, $e_2$ is not an auxiliary edge, so $e_2$ appears as an edge in the path $g_B(P)$ of $B$. However, $e_2$ is not an edge of $P'$, so $e_2$ does not appear in the path $g_B(Q)$. 

Next, suppose $Q \neq P'$. Then there is an edge $e_2$ distinct from $e_1$ such that $P' + e_2$ is an initial segment of $Q$. Since each vertex of $a(B)$ is incident to at most one auxiliary edge by construction, we may assume up to symmetry between $e_1$ and $e_2$ that $e_1$ is not an auxiliary edge. But now $e_1$ appears as an edge in the path $g_B(P)$ of $B$ and not in the path $g_B(Q)$ of $B$. This completes the proof that $g_B$ is injective. 

Next we prove that $g_B$ is surjective. Let $P$ be an $r$-path of $B$. Let $P'$ be the path formed by replacing $v$ with $v^+v^-$ or $v^-v^+$ for each vertex $v \in V(P) \cap \Pi_B$ depending on the sign in which $P'$ enters $v$, then removing the final edge if it is an auxiliary edge. Now $P'$ is an $r$-path of $a(B)$ and $g_B(P') = P$. This completes the proof. 
\end{proof}

\begin{corollary}\label{cor:path-reachable}
    Let $B$ be a bidirected graph rooted at $r$ and let $e \in E(B)$. Then there is an $r$--$\ve$~path in $B$ if and only if there is an $r$--$\ve$~path in $a(B)$.
    Furthermore, $B$ is path-reachable and the vertices of $\Pi_B$ are non-isolated if and only if $a(B)$ is path-reachable.
\end{corollary}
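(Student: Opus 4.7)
The plan is to reduce everything to \cref{lem:aux_path_bijection}. For the first statement, I would observe that any edge $e \in E(B)$ is not an auxiliary edge of $a(B)$, so every $r$--$\ve$~path in $a(B)$ is automatically proper. Inspecting the construction of $g_B$, which only contracts auxiliary edges, shows that $g_B$ preserves the last edge and its orientation. Hence \cref{lem:aux_path_bijection} restricts to a bijection between $r$--$\ve$~paths in $a(B)$ and $r$--$\ve$~paths in $B$, which yields the first statement.

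For the second statement, since $E(a(B)) = E(B) \cup \{v^+v^- : v \in \Pi_B\}$, the first statement already settles path-reachability of the edges in $E(B)$, and I would focus on the auxiliary edges. The key observation is that, by construction, every non-auxiliary edge incident with $v^\alpha$ has sign $\alpha$ at $v^\alpha$, while the auxiliary edge $v^+v^-$ has sign $-\alpha$ at $v^\alpha$; in particular, the sign-alternation condition is automatically satisfied when extending any $r$-path in $a(B)$ that ends at $v^\alpha$ via a non-auxiliary edge by the auxiliary edge. Combined with the first statement (applied to the predecessor edge), this shows that $v^+v^-$ is path-reachable in $a(B)$ if and only if there is an $r$-path in $B$ ending at $(v,\alpha)$ for some $\alpha \in \{+,-\}$.

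For the forward direction, assume $B$ is path-reachable and every $v \in \Pi_B$ is non-isolated in $B$. Given such a $v$, I would pick an edge $e$ of $B$ incident with $v$, take an $r$-path $P$ in $B$ ending at $e$, and observe that the two endpoints of $e$ are the last two vertices of $P$, so $v \in V(P)$; truncating $P$ at the occurrence of $v$ yields an $r$-path reaching $v$ with some sign, as required. For the converse, if $a(B)$ is path-reachable then by the first statement every edge of $B$ is path-reachable in $B$, and for each $v \in \Pi_B$ the path-reachability of $v^+v^-$ forces some non-auxiliary (hence in $E(B)$) predecessor edge incident with $v$, so $v$ is non-isolated in $B$.

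I do not expect any major obstacle; the only care needed is to confirm that in $a(B)$ each of $v^+, v^-$ is incident to exactly one auxiliary edge, so that the predecessor edge in any $r$-path ending at $v^+v^-$ lies in $E(B)$, which is immediate from the construction of $a(B)$.
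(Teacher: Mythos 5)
Your proposal is correct and follows essentially the same route as the paper: the first assertion is read off from \cref{lem:aux_path_bijection} (noting that paths ending in a non-auxiliary edge are proper and that $g_B$ preserves the terminal edge), and the second reduces to showing that each auxiliary edge $v^+v^-$ is path-reachable exactly when $v$ is non-isolated and reached by some $r$-path, using the sign pattern at $v^+,v^-$ to append the auxiliary edge. The only cosmetic difference is that you truncate the path in $B$ and then lift, whereas the paper works directly with the $r$--$e$~path in $a(B)$ and observes it either already contains or extends to an $r$--$v^+v^-$~path.
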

\begin{proof}
By~\cref{lem:aux_path_bijection}, for every $e \in E(B)$ there is an $r$--$\ve$~path in $B$ if and only if there is an $r$--$\ve$~path in $a(B)$.
Note that if an auxiliary edge $v^+v^-$ is path-reachable in $a(B)$, then there is an $r$--$v$~path in $B$ and in particular, $v$ is not isolated.
Thus it suffices to show that if $B$ is path-reachable and $v \in \Pi_B$ is not isolated, then the auxiliary edge $v^+v^-$ is path-reachable.
Let $e$ be an edge incident with $v$. Then there is an $r$--$e$~path $P$ in $a(B)$ by~\cref{lem:aux_path_bijection}. By construction of $a(B)$, either $P$ contains an $r$--$v^+v^-$~path or the concatenation of $P$ and $v^+v^-$ is an $r$--$v^+v^-$~path. Thus $v^+v^-$ is path-reachable.
\end{proof}

\begin{lemma}\label{lem:aux-graph-trail-to-almost path}
    Let $B$ be a bidirected graph rooted at $r$ and let $\ve$ be some orientation of an edge in $a(B)$. If there is an $r$--$\ve$~trail in $a(B)$, then there is an $r$--$\ve$~almost path in $a(B)$. 
\end{lemma}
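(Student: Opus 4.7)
I plan to proceed by induction on the length $n$ of the trail $T$. The base case $n = 1$ is immediate, since such a trail is an almost path by definition. For $n \geq 2$, assume the result for all shorter trails in $a(B)$, and write $T = v_0 e_1 v_1 \cdots v_n$. If $T - \ve$ is already a path, there is nothing to prove. Otherwise, choose indices $t_1 < t_2 \leq n-1$ with $v_{t_1} = v_{t_2} =: w$ such that $t_2$ is minimum; then the vertices $v_0, v_1, \ldots, v_{t_2-1}$ are pairwise distinct.

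My first step is to argue that $w$ is not a split vertex $v^{\pm}$ (with $v \in \Pi_B$). Suppose $w = v^+$. At $v^+$ the only incident edge of sign $-$ is the auxiliary edge $v^+ v^-$, while every other incident edge has sign $+$. Hence any internal visit to $v^+$ must use two adjacent edges of opposite signs and therefore must include the auxiliary edge; since trails use each edge at most once, $v^+$ can appear internally in $T$ at most once. Because $r \notin \Pi_B$ we have $v^+ \neq r$, so $v^+$ does not appear at position $0$ of $T$; a short analysis of whether $v^+$ coincides with the head or tail of $\ve$ then rules out any remaining double visit in $T - \ve$. The case $w = v^-$ is analogous. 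Hence $w$ is a non-plain vertex of $B$, i.e.\ $w \in V(B) \setminus \Pi_B$.

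My second step compares the outgoing signs at $w$. If $\sigma(e_{t_1+1}, w) = \sigma(e_{t_2+1}, w)$, then
\[
T' := v_0 e_1 v_1 \cdots v_{t_1} e_{t_2+1} v_{t_2+1} \cdots v_n
\]
is a valid $r$--$\ve$~trail of length $n - (t_2 - t_1) < n$: the sign condition at $v_{t_1}$ in $T'$ holds because the trail conditions of $T$ at $t_1$ and $t_2$ yield $\sigma(e_{t_1}, w) = -\sigma(e_{t_1+1}, w) = -\sigma(e_{t_2+1}, w)$. Applying the inductive hypothesis to $T'$ produces the desired almost path.

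The main obstacle is the case $\sigma(e_{t_1+1}, w) \neq \sigma(e_{t_2+1}, w)$, in which the naive skip clashes in sign at $w$. The plan is to exploit the non-plainness of $w$: by \cref{cor:path-reachable}, both an $r$--$(w,+)$~path and an $r$--$(w,-)$~path exist in $a(B)$. Applying the inductive hypothesis to the prefix $v_0 e_1 v_1 \cdots v_{t_2}$ of $T$, viewed as an $r$--$\vec{e}_{t_2}$~trail of length $t_2 < n$, yields an $r$--$\vec{e}_{t_2}$~almost path $A$. Since $\vec{e}_{t_2}$ has sign at $w$ opposite to that of $e_{t_2+1}$, concatenating $A$ with the suffix $v_{t_2} e_{t_2+1} v_{t_2+1} \cdots v_n$ of $T$ satisfies the sign condition at $w$. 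The critical technical point, and the principal obstacle of the whole argument, is to ensure that this concatenation is edge-disjoint (so that it really is a trail) and that from it one can extract an $r$--$\ve$~almost path; this is handled by a careful secondary argument that leverages the two oppositely-signed $r$--$w$~paths supplied by \cref{cor:path-reachable} together with the minimality of $t_2$.
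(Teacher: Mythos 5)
Your first two steps are sound and parallel observations the paper also makes: a vertex that is visited twice internally by a trail in $a(B)$ must, at each such visit, use two consecutive edges of opposite sign, hence cannot be a split vertex $v^{\pm}$ (each visit would consume the unique auxiliary edge), so any repeated vertex $w$ lies in $V(B)\setminus\Pi_B$ and is non-plain; and when the two departing edges $e_{t_1+1}$, $e_{t_2+1}$ agree in sign at $w$, the splice $T'$ is a strictly shorter $r$--$\ve$~trail and induction applies. (Minor point: the existence of both an $r$--$(w,+)$~ and an $r$--$(w,-)$~path in $a(B)$ for non-plain $w$ should be cited from \cref{lem:aux_path_bijection}, not \cref{cor:path-reachable}, which concerns reachability of edges.)

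However, the remaining case --- $\sigma(e_{t_1+1},w)\neq\sigma(e_{t_2+1},w)$ --- is where all the difficulty lives, and your proposal does not actually resolve it. Note first that, by the minimality of $t_2$, the prefix $v_0e_1\cdots v_{t_2}$ is \emph{already} an $r$--$\vec{e}_{t_2}$~almost path, so the inductive hypothesis applied to it gives you nothing new: the almost path $A$ it returns may simply be that prefix, and concatenating it with the suffix returns $T$ itself. Even if $A$ were different, you have no control over whether $A$ is edge-disjoint from the suffix, whether the concatenation avoids revisiting vertices of $A$, or whether the suffix itself revisits earlier vertices --- and the concatenation may be longer than $T$, so you cannot recurse on it. Your closing sentence defers exactly this to ``a careful secondary argument,'' but that argument is the theorem. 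The paper's proof handles it by a double extremal choice (take the $r$--$(u,\alpha)$~trail minimizing the number of vertices used at least twice, then among $(u,\alpha)$--$(w,-)$~paths take one minimizing the edges outside the terminal segment $S$) followed by several path/trail surgeries, each contradicting one of the two minimality assumptions. Some mechanism of this kind --- exploiting the oppositely signed $r$--$w$~paths to reroute around $w$ while controlling intersections --- must be supplied explicitly; without it the proof is incomplete.
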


\begin{proof}
    Let $u$ and $v$ be tail and head of $\ve$, respectively, and let $-\alpha$ be the sign of $e$ at $u$. Since there is an $r$--$\ve$~trail in $a(B)$, there is an $r$--$(u, \alpha)$~trail in $a(B) - e$. 
    
    First we show that there is an $r$--$(u, \alpha)$~path in $a(B)$. Let $T$ be an $r$--$(u, \alpha)$~trail of $a(B) - e$ that minimizes the number of those vertices that are used at least twice. If $T$ has no vertex used at least twice, then $T$ is an $r$--$(u, \alpha)$~path in $a(B)$, so assume that some vertex is used at least twice in $T$.

    Let $w$ be the last vertex of $T$ that is used twice. Let $S$ be the terminal segment of $T$ that starts at the last appearance of $w$ and let $R$ be the initial segment of $T$ that terminates at the first appearance of $w$ in $T$, so $S$ is in particular a path and $R$ is a trail that meets $w$ only in its last vertex. Furthermore, $R$ and $S$ are disjoint except for $w$. Up to symmetry between $+$ and $-$, we may assume that $R$ is an $r$--$(w, +)$~trail and $S$ is a $(w, +)$--$(u, \alpha)$~trail, since otherwise $S \cup R$ is an $r$--$(u, \alpha)$~trail that uses $w$ only once. 

    \begin{claim}
        There is a $(u, \alpha)$--$(w, -)$~path in $a(B)$.
    \end{claim}
    \begin{claimproof}
    Since $w$ occurs at least twice as an internal vertex of the trail $T \circ \ve$ in $a(B)$, it follows that $w$ is not incident to an auxiliary edge and thus $w \in V(B) \setminus (\Pi_B \cup \{r\})$. Therefore there is an $r$--$(w, -)$~path in $B$. By~\cref{lem:aux_path_bijection}, there is an $r$--$(w, -)$~path $P$ in $a(B)$. Let $a$ be the last vertex of $S$ contained in $P$ (possibly $a = w$). If $Pa \cup aS$ is a path, then $Pa \cup aS$ is an $r$--$(u, \alpha)$~path that contradicts the choice of $T$. Therefore, $aS \cup aP$ is a $(u, \alpha)$--$(w, -)$~path. This proves the claim. 
%    If $P$ does not intersect $S$, then $P \cup S$ is an $r$--$(u, \alpha)$~path 
    \end{claimproof}

    By the claim above, we may choose a $(u, \alpha)$--$(w, -)$~path $Q$ of $a(B)$ that minimizes $E(Q) \setminus E(S)$. Let $x$ be the first vertex of $R$ that meets $Q$ (possibly $x = w$) and let $y$ be the last vertex of $S$ that meets $xQ$. If $Rx \cup x Q^{-1}$ is a path, then it is an $r$--$(u, \alpha)$~trail that contradicts the choice of $T$, since $w$ appears at most once in $Rx \cup x Q^{-1}$. Therefore, $Rx \cup xQy$ is a trail which meets $w$ at most once. Now, if $Rx \cup xQy \cup yS$ is a trail, then it contradicts the choice of $T$, since $w$ appears at most once. Therefore, $S^{-1}y \cup yQ$ is a $(u, \alpha)$--$(w, -)$~path that contradicts the choice of $Q$.  

    We have now shown that $T$ is a $r$--$(u, \alpha)$~path of $a(B)$. Therefore, $T \circ \ve$ is an $r$--$\ve$~almost path of $a(B)$, which completes the proof.
\end{proof}

We can use this to relate trail-reachability in $a(B)$ with a kind of reachability in $B$. More precisely, we say a bidirected graph $B$ rooted at $r$ is {\em reachable} if $B$ contains an $r$--$e$~almost path for every $e \in E(B)$.
Similarly, an edge $e \in E(B)$ is {\em directable} if there is a unique orientation $\ve$ of $e$ such that $B$ contains an $r$--$\ve$~almost path; we call this orientation $\ve$ the {\em natural orientation} of $e$.

\begin{corollary}
\label{cor:reach}
Let $B$ be a rooted bidirected graph. Then: 
\begin{enumerate}[label=(\alph*)]
    \item\label{itm:reach_2} for all $e \in E(B)$, there is an $r$--$\ve$~almost path in $B$ if and only if there is an $r$--$\ve$~trail in $a(B)$;
    \item\label{itm:reach_3} for all $e \in E(B)$, $e$ is directable in $B$ if and only if $e$ is trail-directable in $a(B)$;
    \item\label{itm:reach_1}  $B$ is reachable if and only if $a(B)$ is trail-reachable; and
    \item\label{itm:reach_4} $B$ is clean if and only if $a(B)$ is edge-clean.
\end{enumerate} 
\end{corollary}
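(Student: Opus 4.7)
The plan is to prove \cref{itm:reach_2} carefully and then derive the other three parts from it, using \cref{lem:aux_path_bijection} and \cref{lem:aux-graph-trail-to-almost path} as the main tools.

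For the forward direction of \cref{itm:reach_2}, given an $r$--$\ve$~almost path $T$ of $B$ with $u$ the tail of $\ve$ in $B$, I would consider the $r$-path $T - \ve$ of $B$ and lift it to a proper $r$-path $T'$ of $a(B)$ via \cref{lem:aux_path_bijection}. If $u \notin \Pi_B$, the edge $\ve$ of $a(B)$ remains incident to $u$ and can be appended directly. If $u \in \Pi_B$, then $T'$ ends at $u^{-\sigma(e,u)}$ whereas $\ve$ in $a(B)$ is attached to $u^{\sigma(e,u)}$, so I would first insert the auxiliary edge $u^+u^-$ between them. The sign condition along the resulting trail is a routine calculation; the key non-routine observation is that $T'$ does not visit $u^{\sigma(e,u)}$, because $u$ being plain forces any proper $r$-path in $a(B)$ to reach $u^{\sigma(e,u)}$ only through the auxiliary edge from $u^{-\sigma(e,u)}$, which is incompatible with $T'$ being proper and ending at $u^{-\sigma(e,u)}$. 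For the reverse direction, \cref{lem:aux-graph-trail-to-almost path} gives an $r$--$\ve$~almost path $T^* \circ \ve$ in $a(B)$; a sign check at the tail of $\ve$ shows that if $u \notin \Pi_B$ then $T^*$ is already a proper $r$-path, while if $u \in \Pi_B$ then the last edge of $T^*$ must be the auxiliary edge at $u$ and stripping it off yields a proper $r$-path. In either case, \cref{lem:aux_path_bijection} projects this proper path to an $r$-path of $B$ ending at $u$ with the correct sign, and appending $\ve$ yields the desired $r$--$\ve$~almost path in $B$.

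The remaining three parts then fall out cleanly. Part \cref{itm:reach_3} is just \cref{itm:reach_2} applied to each of the two orientations of $e$. Part \cref{itm:reach_1} combines \cref{itm:reach_2} over all $e \in E(B)$ with the additional observation that whenever $B$ is reachable, every auxiliary edge $v^+v^-$ of $a(B)$ is also trail-reachable; I would prove this by taking an $r$-trail of $a(B)$ ending with a non-auxiliary edge incident to $v$ and extending it by the auxiliary edge (the sign condition at $v^\alpha$ is automatic since the auxiliary and non-auxiliary edges at $v^\alpha$ have opposite signs at $v^\alpha$). Part \cref{itm:reach_4} follows from \cref{itm:reach_2} combined with \cref{lem:edge-clean_characterisation}: a nontrivial $r$--$r$~almost path in $B$ has a last edge $\ve$ with head $r$, which by \cref{itm:reach_2} corresponds to an $r$--$\ve$~trail in $a(B)$ and hence a nontrivial $r$--$r$~trail in $a(B)$; conversely, a nontrivial $r$--$r$~trail in $a(B)$ has last edge $\ve$ with head $r$ which (using the convention $r \notin \Pi_B$, so $r$ is not split and the last edge is never auxiliary) corresponds via \cref{itm:reach_2} to a nontrivial $r$--$r$~almost path in $B$.

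The main obstacle is the case $u \in \Pi_B$ in \cref{itm:reach_2}, where auxiliary edges must be inserted (forward direction) or stripped (backward direction) in a way that preserves the trail and path structure. The saving grace is the structural property of plain vertices, which tightly constrains how paths in $a(B)$ can interact with the pair $u^+, u^-$: the ``bad'' signed vertex $u^{\alpha_0}$ can only be reached by a proper $r$-path via the auxiliary edge from $u^{-\alpha_0}$, and this is exactly what makes the insertion and stripping steps preserve properness and edge-disjointness.
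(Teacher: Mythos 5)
Your proposal is correct and follows exactly the route the paper intends: the paper states \cref{cor:reach} without proof as an immediate consequence of \cref{lem:aux_path_bijection} and \cref{lem:aux-graph-trail-to-almost path}, and your argument — lifting $T-\ve$ via $g_B^{-1}$, inserting or stripping the auxiliary edge at a plain tail, and using \cref{lem:aux-graph-trail-to-almost path} plus \cref{lem:edge-clean_characterisation} for the reverse directions and part \cref{itm:reach_4} — is precisely the intended derivation, with the details correctly filled in. The only (inherited) caveat is that part \cref{itm:reach_1} implicitly assumes no plain vertex is isolated, just as the paper does.
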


Now, we investigate the relation of internally vertex-disjoint paths in $B$ and edge-disjoint paths in $a(B)$ under the condition that $B$ is clean.
For every $x \in V(B)$, let $\mathbf{a}(x) := \{x\}$ if $x \not \in \Pi_B$ and $\mathbf{a}(x) := \{x^+, x^-\}$ if $x \in \Pi_B$. 

\begin{theorem}\label{thm:aux_disjoint_paths}
    Let $B$ be a clean bidirected graph rooted at $r$ and let $a(B)$ be its auxiliary graph. Then, for every $x \in V(B) \setminus \{r\}$, a collection $\mathcal{P}$ of $r$--$\mathbf{a}(x)$~proper paths of $a(B)$ is edge-disjoint if and only if the corresponding collection of $r$--$x$~paths of $B$ given by $\mathcal{P}' \coloneqq\{g_B(P) \mid P \in \mathcal{P}\}$ is internally vertex-disjoint. 
\end{theorem}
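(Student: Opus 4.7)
The strategy is a two-way contradiction proof that rests on two structural facts specific to $a(B)$: first, for every $w\in\Pi_B$, the auxiliary edge $w^+w^-$ is the unique edge incident with $w^+$ (resp.\ $w^-$) whose sign at $w^+$ (resp.\ $w^-$) is opposite to that of every non-auxiliary edge at the same vertex; second, when $B$ is clean, \cref{lem:non_plain_vertex} gives $\kappa_B(r,w)=1$ for every non-plain vertex $w$. These two facts will supply the contradictions in the two directions, respectively.

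For the ``$\Leftarrow$'' direction, I would assume two distinct proper $r$--$\mathbf{a}(x)$~paths $P,Q\in\mathcal{P}$ share an edge $e$ of $a(B)$ and deduce a common internal vertex of $g_B(P)$ and $g_B(Q)$. If $e=w^+w^-$ is auxiliary, both paths traverse $\{w^+,w^-\}$; the hypothesis that $P$ and $Q$ are proper forbids $e$ from being the last edge, and a short check (if $w=x$ then $P$ would end in $\mathbf{a}(x)=\{w^+,w^-\}$ and the only possible penultimate step would be via $e$, contradicting properness) shows $w\neq x$; together with $w\neq r$, the vertex $w$ is internal to both $g_B(P)$ and $g_B(Q)$. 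If $e$ is non-auxiliary, then $e\in E(B)$ lies on both $g_B(P)$ and $g_B(Q)$. Unless both endpoints of $e$ are in $\{r,x\}$, at least one endpoint is internal to both paths; and in the remaining case both paths consist solely of $e$, forcing $P=Q$ by the bijection of~\cref{lem:aux_path_bijection}, contradicting $P\neq Q$.

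For the ``$\Rightarrow$'' direction, suppose $g_B(P)$ and $g_B(Q)$ share an internal vertex, and let $w$ be the first shared internal vertex encountered along $g_B(P)$. The key step is to argue that the prefixes $g_B(P)w$ and $g_B(Q)w$ are internally vertex-disjoint $r$--$w$~paths: by minimality, no internal vertex of $g_B(P)w$ is internal to $g_B(Q)$, and since the only non-internal vertices on $g_B(Q)$ are $r$ and $x$ (neither of which can appear internally on a prefix of $g_B(P)$ that ends before $x$), no internal vertex of $g_B(P)w$ lies on $g_B(Q)$ at all; symmetrically, an internal vertex of $g_B(Q)w$ on $g_B(P)w$ would be an internal vertex of $g_B(P)w$ lying on $g_B(Q)$, which has just been ruled out. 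Hence $\kappa_B(r,w)\ge 2$. If $w\notin\Pi_B$, this contradicts~\cref{lem:non_plain_vertex}. If $w\in\Pi_B$, the bidirected signing forces each of $P,Q$ through $w$ to use the unique edge at $w^+$ of opposite sign to its other incident edges, namely the auxiliary edge $w^+w^-$; so $P$ and $Q$ share this edge, contradicting edge-disjointness.

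The principal obstacle is the forward direction: a shared internal vertex of the two paths does not by itself yield two internally vertex-disjoint prefixes, so one needs the careful minimality argument above, exploiting that the paths share the endpoints $r$ and $x$. The remaining work is a routine translation between $B$ and $a(B)$ via~\cref{lem:aux_path_bijection}, together with the bidirected signing rule which, at a plain vertex, singles out the auxiliary edge as the only permissible sign-flipping transition.
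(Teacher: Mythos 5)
Your proposal is correct and follows essentially the same route as the paper: both directions are argued by contraposition, with the forward direction taking the first common internal vertex $w$, observing that the two prefixes up to $w$ are internally vertex-disjoint, invoking \cref{lem:non_plain_vertex} to force $w\in\Pi_B$, and concluding that both preimages under $g_B$ must contain the auxiliary edge $w^+w^-$. The only cosmetic difference is that the paper additionally uses cleanness to show the two prefixes arrive at $w$ with the same sign (so that $\kappa_B^\alpha(r,w)\ge 2$), whereas you pass directly from $\kappa_B(r,w)\ge 2$ to plainness of $w$, which suffices.
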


\begin{proof}
    First, assume that $\mathcal{P}$ is edge-disjoint and suppose for a contradiction that $\mathcal{P}'$ is not internally vertex-disjoint. Then there are two paths $P'$ and $Q'$ of $\mathcal{P}'$ which have an internal vertex in common; let $v$ be the first vertex of $P'$ that is in $Q'$. Now, the paths $P'v$ and $Q'v$ are internally vertex-disjoint.
    Thus there is a sign $\alpha \in \{+,-\}$ such that the paths $P'$ and $Q'$ arrive in $v$ with sign $\alpha$ since otherwise $P'v(Q')^{-1}$ is a nontrivial $r$--$r$~almost path, contradicting that $B$ is clean. 
    Then $\kappa_B^\alpha(r, v) \geq 2$ and the vertex $v$ is plain by~\cref{lem:non_plain_vertex}.
    Let $P = g_B^{-1}(P')$ and $Q = g_B^{-1}(Q')$ be the proper $r$--$\mathbf{a}(x)$~paths of $\mathcal{P}$ corresponding to $P'$ and $Q'$. Since $v$ is plain, the edge $v^+v^-$ appears in both $P$ and $Q$, contradicting that $\mathcal{P}$ is edge-disjoint.

    Conversely, assume that $\mathcal{P}'$ is internally vertex-disjoint. Since the paths of $\mathcal{P}$ are formed from the paths of $\mathcal{P}'$ by adding auxiliary edges, $\mathcal{P}'$ is edge-disjoint, except for possibly auxiliary edges. But if two proper paths $P'$ and $Q'$ of $\mathcal{P}'$ share an auxiliary edge $v^+v^-$, then the corresponding paths $g_B(P')$ and $g_B(Q')$ of $\mathcal{P}$ both contain $v$ as an internal vertex, a contradiction. 
\end{proof}
    \noindent
    By the definition of $g_B$ we can deduce:
    
    \begin{corollary}\label{cor:aux_connectivity}
        Let $B$ be a clean bidirected graph rooted at $r$. Then for every $v \in \Pi_B$
        \begin{enumerate}[label=(\roman*)]
            \item $\max_{\alpha \in \{+,-\}} \lambda_{a(B)}^{\textrm{path}}(r,v^\alpha) = \kappa_B(r,v)$;
            \item $\lambda_{a(B)}^{\alpha, \textrm{ path}}(r,v^\alpha) = \kappa_B^\alpha(r,v)$ for $\alpha \in \{+,-\}$; and
            \item $\lambda_{a(B)}^{-\alpha, \textrm{ path}}(r,v^\alpha) = \min\{1,\kappa_B^{-\alpha}(r,v)\}$  for $\alpha \in \{+,-\}$.
        \end{enumerate}
        Furthermore, for every $v \in V(B) \setminus (\Pi_B \cup \{r\})$
        \begin{enumerate}[label=(\roman*), start=4]
           \item $\lambda_{a(B)}^{\textrm{path}}(r,v) = \kappa_B(r,v)$; and
            \item $\lambda_{a(B)}^{\alpha, \textrm{ path}}(r,v) = \kappa_B^\alpha(r,v)$ for $\alpha \in \{+,-\}$.
        \end{enumerate}
    \end{corollary}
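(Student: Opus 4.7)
The plan is to derive all five clauses from~\cref{lem:aux_path_bijection} (the bijection $g_B$ between proper $r$-paths of $a(B)$ and $r$-paths of $B$), \cref{thm:aux_disjoint_paths} (which matches edge-disjointness in $a(B)$ with internal vertex-disjointness in $B$), together with one structural observation: for $v \in \Pi_B$, the auxiliary edge $v^+v^-$ is the unique edge of $a(B)$ incident with $v^\alpha$ that carries sign $-\alpha$ there. This single observation governs both which $r$-paths are proper and which arrival signs are attainable at $v^\alpha$.

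For~(ii) and~(v), every $r$--$(v^\alpha,\alpha)$~path of $a(B)$ is automatically proper, because an auxiliary terminal edge at $v^\alpha$ would force the arrival sign to be $-\alpha$. Hence a maximum edge-disjoint family of such paths is a family of proper $r$--$\mathbf{a}(v)$~paths in the sense of~\cref{thm:aux_disjoint_paths}, which $g_B$ maps to an internally vertex-disjoint family of $r$--$(v,\alpha)$~paths of $B$; conversely, any such family in $B$ lifts through $g_B^{-1}$ to a collection whose paths all arrive at $v^\alpha$ with sign $\alpha$, since the only edges of $a(B)$ at $v^\alpha$ with sign $\alpha$ are the non-auxiliary ones. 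This yields~(ii); the case $v \notin \Pi_B$ is identical but simpler because $\mathbf{a}(v) = \{v\}$ and no splitting occurs, yielding~(v), and~(iv) follows by partitioning a maximum internally-vertex-disjoint family in $B$ by arrival sign at $v$.

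For~(iii), every $r$--$(v^\alpha,-\alpha)$~path of $a(B)$ must end with the auxiliary edge $v^+v^-$, so any edge-disjoint family of such paths has size at most $1$. Conversely, if $\kappa_B^{-\alpha}(r,v)\geq 1$, any $r$--$(v,-\alpha)$~path of $B$ lifts via $g_B^{-1}$ to a proper $r$--$(v^{-\alpha},-\alpha)$~path of $a(B)$ that avoids $v^\alpha$ internally, and appending the auxiliary edge produces an $r$--$(v^\alpha,-\alpha)$~path. This gives $\lambda_{a(B)}^{-\alpha,\textrm{ path}}(r,v^\alpha)=\min\{1,\kappa_B^{-\alpha}(r,v)\}$.

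Finally~(i) follows by combining~(ii) and~(iii). Since $v \in \Pi_B$, there is a sign $\alpha$ with $\kappa_B^\alpha(r,v)=0$, and then every $r$--$v$~path of $B$ arrives with sign $-\alpha$, forcing $\kappa_B(r,v)=\kappa_B^{-\alpha}(r,v)$. Partitioning any edge-disjoint family of $r$--$v^\beta$~paths of $a(B)$ by arrival sign and substituting from~(ii) and~(iii) gives $\lambda_{a(B)}^{\textrm{path}}(r,v^{-\alpha}) = \kappa_B(r,v)$ (the arrivals with sign $\alpha$ contribute $0$) and $\lambda_{a(B)}^{\textrm{path}}(r,v^\alpha) \leq 1 \leq \kappa_B(r,v)$ whenever $v$ is reachable (the unreachable case is trivial). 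Taking the maximum over $\beta$ yields~(i). The main subtlety throughout is the local bookkeeping at the split vertex: the auxiliary edge simultaneously enables opposite-sign arrivals at $v^\alpha$ and caps them at one, so proper-vs-improper status and single-vs-multiple disjointness are controlled by the same single edge.
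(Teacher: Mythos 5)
Your argument is correct and matches the paper's intent exactly: the paper derives this corollary directly from the definition of $g_B$ together with \cref{lem:aux_path_bijection} and \cref{thm:aux_disjoint_paths}, and your case analysis at the split vertex $v^{\pm}$ (the auxiliary edge being the unique edge of sign $-\alpha$ at $v^{\alpha}$) is precisely the bookkeeping that justifies the paper's one-line deduction. The only phrasing to tighten is in~(iv), where no partition by arrival sign is needed (and deriving (iv) from (v) by handling the two sign classes separately would not by itself give the lower bound, since the two witnessing families need not combine): because no auxiliary edge is incident with $v \notin \Pi_B$, every $r$--$v$~path of $a(B)$ is proper and \cref{thm:aux_disjoint_paths} applies verbatim to mixed-sign families.
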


\subsection{A strong Menger's theorem for internally vertex-disjoint paths}
    We prove the following variant of~\cref{thm:strong_menger_path} for internally vertex-disjoint paths:
    \begin{theorem} \label{thm:strong_menger_vertex}
    Let $B$ be a clean bidirected graph rooted at $r$ and let $x \in V(B) \setminus \{r\}$ such that there is no edge between $r$ and $x$. Then 
    \[ \kappa_B(r, x) = \min \left\lbrace  \left| \epsilon_{r,B} (X)
            \right|\ :\ 
             \{x\} \cup \InVertices_B(x) \subseteq X 
             \subseteq V(B)\setminus \{ r \}\right\rbrace. \]
    \end{theorem}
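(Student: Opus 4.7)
The plan is to reduce \Cref{thm:strong_menger_vertex} to the edge-disjoint strong Menger theorem, namely \Cref{thm:strong_menger_path}, applied to the auxiliary graph $a(B)$. Since $B$ is clean, \Cref{cor:reach}\ref{itm:reach_4} gives that $a(B)$ is edge-clean, so \Cref{thm:strong_menger_path} is available for $a(B)$. The bridge between internally vertex-disjoint $r$--$x$~paths of $B$ and edge-disjoint proper $r$--$\mathbf{a}(x)$~paths of $a(B)$ is provided by \Cref{thm:aux_disjoint_paths}, and \Cref{cor:aux_connectivity} allows us to pick $x^\ast \in \mathbf{a}(x)$ with $\lambda_{a(B)}^{\textrm{path}}(r,x^\ast) = \kappa_B(r,x)$.

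For the inequality ``$\leq$'' I would fix any admissible $X\subseteq V(B)\setminus\{r\}$ with $\{x\}\cup\InVertices_B(x)\subseteq X$, and check that every $r$--$x$~path of $B$ must meet the boundary $\epsilon_{r,B}(X)$ at an internal vertex: the hypothesis that there is no edge between $r$ and $x$ forces each such path to have at least one internal vertex, and the containment $\InVertices_B(x)\subseteq X$ ensures that the path must enter $X$ at a vertex distinct from $x$, which then lies in $\epsilon_{r,B}(X)$. Internal vertex-disjointness of a family of $\kappa_B(r,x)$ paths then yields $\kappa_B(r,x)\leq|\epsilon_{r,B}(X)|$.

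For the harder inequality ``$\geq$'' I would apply \Cref{thm:strong_menger_path} in $a(B)$ with target $x^\ast$ to produce $Y\subseteq V(a(B))\setminus\{r\}$ with $x^\ast\in Y$ and $|\delta^{\mathsf{path}}_{r,a(B)}(Y)|=\kappa_B(r,x)$, and then ``project'' $Y$ to a set $X\subseteq V(B)$ by declaring $v\in X$ precisely when $\mathbf{a}(v)\subseteq Y$. The cut $\delta^{\mathsf{path}}_{r,a(B)}(Y)$ splits into auxiliary crossing edges (one per plain vertex $v$ with $v^+,v^-$ separated by $Y$) and non-auxiliary crossing edges; the former translate directly into vertices of $\epsilon_{r,B}(X)$. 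To make the translation clean I would first normalize $Y$, using a standard uncrossing/shifting argument, so that every non-auxiliary cut edge $e$ incident with a plain vertex $v$ is exchanged for the auxiliary edge $v^+v^-$ at no increase of cut size, exploiting the path-reachability granted by \Cref{cor:path-reachable} and the gadget structure built in \Cref{sec:aux-graph}. Once this is done one checks that $x\in X$ (because $x^\ast\in Y$ and any non-auxiliary edge into $x$ would contradict $\InVertices_B(x)\not\subseteq X$ or optimality), that $\InVertices_B(x)\subseteq X$ (any violation provides a strictly improving modification of $Y$), and that $|\epsilon_{r,B}(X)|\leq|\delta^{\mathsf{path}}_{r,a(B)}(Y)|=\kappa_B(r,x)$.

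The main obstacle is the normalization step: convincing ourselves that an optimal edge-cut in $a(B)$ can always be massaged so that its non-auxiliary contributions disappear in favour of auxiliary ones, while preserving $x^\ast\in Y$ and the cut size. This is where the cleanness of $B$ (via edge-cleanness of $a(B)$) and the precise description of $r$--$\ve$~paths around plain vertices provided by \Cref{lem:aux_path_bijection,cor:path-reachable} are essential; the hypothesis of no edge between $r$ and $x$ rules out the degenerate case in which $\InVertices_B(x)$ would otherwise trivialise the minimisation.
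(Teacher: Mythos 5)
Your overall reduction is the same as the paper's: use \Cref{cor:reach} to get edge-cleanness of $a(B)$, pick $x^\ast\in\mathbf{a}(x)$ with $\lambda_{a(B)}^{\textrm{path}}(r,x^\ast)=\kappa_B(r,x)$, apply \Cref{thm:strong_menger_path} to get an optimal set $Y$, and translate $Y$ into a vertex set $X$ in $B$. The ``$\leq$'' direction is fine. But the ``$\geq$'' direction as you describe it has a genuine gap, concentrated exactly where you flag ``the main obstacle''. Your normalization claim --- that an optimal $Y$ can always be massaged so that every non-auxiliary cut edge incident with a plain vertex is traded for the auxiliary edge at no increase in cut size --- is neither proved nor obviously true. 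The auxiliary edge $v^+v^-$ is path-reachable only in the orientation from $v^+$ to $v^-$, so it lies in $\delta_{r,a(B)}^{\mathsf{path}}(Y)$ only when $v^-\in Y$ and $v^+\notin Y$; in the configuration where a non-auxiliary reachable edge crosses into $v^+\in Y$, the exchange you want requires moving $v^+$ out of $Y$, which can expose other crossing edges, and you give no argument that this terminates with the same cut value. Moreover the normalization says nothing about non-plain vertices, which have no auxiliary edge at all, so the final count $|\epsilon_{r,B}(X)|\le|\delta_{r,a(B)}^{\mathsf{path}}(Y)|$ still needs an injective assignment of cut edges to boundary vertices that you never specify. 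Your other unproved claim, that $\InVertices_B(x)\subseteq X$ ``because any violation provides a strictly improving modification of $Y$'', is also unjustified: an optimal $Y$ can perfectly well exclude $\mathbf{a}(z)$ for some $z\in\InVertices_B(x)$ without any cheaper cut existing.

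The paper sidesteps both problems. It defines $X:=\{v\in V(B):\mathbf{a}(v)\cap Y\neq\emptyset\}\cup\InVertices_B(x)$ (note: nonempty intersection, not containment, and $\InVertices_B(x)$ is added by fiat rather than deduced), and then builds an explicit injection $\Phi$ from $\epsilon_{r,B}(X)$ into $\delta_{r,a(B)}^{\mathsf{path}}(Y)$: a vertex $z$ with $\mathbf{a}(z)\cap Y\neq\emptyset$ is charged either to a non-auxiliary crossing edge with head in $\mathbf{a}(z)$ or to the auxiliary edge of $z$, while a vertex $z$ with $\mathbf{a}(z)\cap Y=\emptyset$ (necessarily in $\InVertices_B(x)$) is charged to a reachable edge from $z$ into $x^\ast$, which crosses into $Y$ because $x^\ast\in Y$. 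Injectivity follows because the two kinds of images are distinguished by whether the head is $x^\ast$, and are anchored to distinct sets $\mathbf{a}(z)$. To repair your proof you would either have to carry out the uncrossing argument in full (including the non-plain vertices and termination), or replace it by such a direct counting argument.
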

    \noindent
    Here, $\InVertices_B(x)$ is the set $\{v \in V(B) \setminus \{r,x\}: \text{there is an $r$--$x$~path with second to last vertex $v$}\}$.
Furthermore, we define $\epsilon_{r,B} (X )$ to be the set of vertices $v \in X$ for which there exists an orientation $\ve$ of an edge of $B$ such that $v$ is the head of $\ve$, the tail of $\ve$ is not in $X$ and there exists an $r$--$\ve$~path in $B$.
Note that~\cref{thm:strong_menger_vertex} fails in general for non-clean bidirected graphs~\cite{bowler2023mengertheorembidirected}*{Figure 1}.

% \green{(Flo: Indeed, \cref{thm:strong_menger_vertex} becomes false if there are parallel edges between $r$ and $x$.)}

\begin{proof}
    Since there is no edge between $r$ and $x$, every $r$--$x$~path has length at least $2$ and thus has an internal vertex in every such set $\epsilon_{r,B} (X)$. Thus
    \[
    \kappa_B(r, x) \leq \min \left\lbrace  \left| \epsilon_{r,B} (X)
            \right|\ :\ 
             \{x\} \cup \InVertices_B(x) \subseteq X 
             \subseteq V(B)\setminus \{ r \}\right\rbrace.
    \]
    
    To prove the inequality ``$\geq$'', we apply~\cref{thm:strong_menger_path}.
    If $x \in \Pi_B$, then there exists $\alpha \in \{+,-\}$ such that $\kappa_B^{-\alpha}(r,x) = 0$, and we set $x^*:= x^\alpha$.
    Otherwise, we set $x^*:= x$.
    Note that $\kappa_B(r,x) = \lambda_{a(B)}^{\textrm{path}}(r,x^*)$ by~\cref{thm:aux_disjoint_paths} and by the choice of $x^*$.
    
    By~\cref{cor:reach}, the auxiliary graph $a(B)$ is edge-clean. 
    Thus we can apply~\cref{thm:strong_menger_path} to $a(B)$ and obtain a set $Y \subseteq V(a(B)) \setminus \{r\}$ with $x^* \in Y$ such that $
    \lambda_B^{\textrm{path}}(r, x^*) = \left| \delta_{r,a(B)}^{\mathsf{path}} (Y )
            \right|$.
    We prove that the set $X:= \{v \in V(B): \mathbf{a}(v) \cap Y \neq \emptyset\} \cup \InVertices_{B}(x)$ witnesses the remaining inequality.
    Note that $r \notin X$ and $\{x\} \cup \InVertices_B(x) \subseteq X$ by the construction of $X$.
    We have to show that $\kappa_B(r,x) \geq |\epsilon_{r,B} (X)|$.

    We define an injective function $\Phi$ from $\epsilon_{r,B} (X)$ to $\delta_{r,a(B)}^{\mathsf{path}} (Y )$.
    Let $F$ be the set of oriented edges in $\delta_{r,a(B)}^{\mathsf{path}} (Y )$ whose head is $x^*$.
    Let $z \in \epsilon_{r,B} (X)$ be arbitrary. Note that $z \neq x$ since $\InVertices_B(x) \subseteq X$ and there is no edge between $r$ and $x$.
    \begin{description}
    \item[If $a(z) \cap Y \neq \emptyset$] Since $z \in \epsilon_{r,B}(X)$, there exists an oriented edge $\ve \in E(B)$ such that its head is $z$, its tail is not contained in $X$ and there exists an $r$--$\ve$~path in $B$. Then $\ve$ in $a(B)$ has the property that its tail is not in $Y$, its head is in $a(z)$ and there exists an $r$--$\ve$~path in $a(B)$ by~\cref{cor:reach}.
    If the head of $\ve$ is in $Y$, $\ve \in \delta_{r,a(B)}^{\mathsf{path}} (Y ) \setminus F$ and we map $z$ to $\ve$.
    Otherwise, the orientation $\vf$ of the edge $z^+z^-$ whose tail is the head of $\ve$ is contained in $\delta_{r,a(B)}^{\mathsf{path}} (Y ) \setminus F$ and we map $z$ to $\vf$.
    \item[If $a(z) \cap Y = \emptyset$] Then $z \in \InVertices_B(x)$, which implies that there exists an oriented edge $\vg$ with tail  $z$ and head  $x$ for which there exists an $r$--$\vg$~path in $B$.
    Thus there is an $r$--$\vg$~path in $a(B)$ by~\cref{cor:reach}. Furthermore, the tail of $\vg$ is not in $Y$ and the head of $\vg$ is in $\mathbf{a}(x)$.
    Note that by the choice of $x^*$, the head of $\vg$ is $x^* \in Y$. Thus $\vg$ is in $F$ and we map $z$ to $\vg$.
    \end{description}
    \noindent
    Since $\Phi$ maps a vertex $z \in \epsilon_{r, B}(X)$ either to an edge of $F$ with tail in $a(z)$ or to an edge of $\delta_{r, a(B)}^{\textrm{path}}(Y) \setminus F$ with head in $a(z)$, $\Phi$ is indeed injective.
    
    Since $\kappa_B(r,x) = \lambda_{a(B)}^{\textrm{path}}(r,x^*) = |\delta_{r,a(B)}^{\mathsf{path}} (Y )|$, we can deduce $\kappa_B(r,x) \geq |\epsilon_{r,B} (X)|$. This completes the proof.
\end{proof}

\subsection{Vertex-flames}
We define \emph{vertex-flames} in bidirected graphs in a similar way as we defined flames:
An $r$-rooted bidirected graph $B$ is a \emph{vertex-flame} if the number of edges of $B$ is bounded by $\sum_{v \in V(D) \setminus \{r\}} \sum_{\alpha \in \{+,-\}} \kappa_B^\alpha(r,v)$.
We transfer~\cref{thm:edge_flame_bidirected} as follows:
%to internally vertex-disjoint paths in clean bidirected graphs:
    \begin{theorem} \label{thm:vertex_flame_bidirected}
    Every $r$-rooted clean bidirected graph $B$ admits an $r$-rooted vertex-flame $ F $ with $\kappa_F^\alpha(r,v)=\kappa_B^\alpha(r,v)$ for each $ v\in V(B) \setminus \{ r \} $ and $\alpha \in \{+,-\}$.
   \end{theorem}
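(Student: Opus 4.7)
The plan is to reduce to the edge-flame theorem via the auxiliary graph construction of \cref{sec:aux-graph}. Since $B$ is clean, $a(B)$ is edge-clean by \cref{cor:reach}\ref{itm:reach_4}. We may assume without loss of generality that $B$ is path-reachable and that every $v \in \Pi_B$ is non-isolated (non-reachable edges and isolated plain vertices contribute nothing to signed connectivities and can be discarded), so that $a(B)$ is path-reachable by \cref{cor:path-reachable}.

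Apply \cref{thm:edge_flame_bidirected} to $a(B)$ to obtain an $r$-rooted edge-flame $F^{*}$ of $a(B)$ with $\lambda_{F^{*}}^{\alpha,\textrm{ path}}(r,w)=\lambda_{a(B)}^{\alpha,\textrm{ path}}(r,w)$ for every $w\in V(a(B))\setminus\{r\}$ and every $\alpha\in\{+,-\}$. Let $F$ be the subgraph of $B$ whose edges are the non-auxiliary edges of $F^{*}$, identified with their counterparts in $E(B)$. Since $F\subseteq B$ forces $\Pi_B\subseteq\Pi_F$, and since (by the argument below) $F^{*}$ contains all auxiliary edges of $a(B)$ lying at reachable plain vertices, we have that $a(F)$ agrees with $F^{*}$ on all vertices reachable from $r$. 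Hence \cref{cor:aux_connectivity} applied to both $F$ and $B$ yields
\[
\kappa_F^\alpha(r,v)\;=\;\kappa_B^\alpha(r,v)\quad\text{for every }v\in V(B)\setminus\{r\}\text{ and }\alpha\in\{+,-\},
\]
in particular giving $\Pi_F=\Pi_B$ a posteriori.

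For the edge count, the edge-flame inequality for $F^{*}$ combined with \cref{cor:aux_connectivity} gives
\[
|E(F^{*})|\;\leq\;\sum_{w\in V(a(B))\setminus\{r\}}\sum_{\alpha\in\{+,-\}}\lambda_{a(B)}^{\alpha,\textrm{ path}}(r,w)\;=\;\sum_{v\in V(B)\setminus\{r\}}\sum_{\alpha\in\{+,-\}}\kappa_B^\alpha(r,v)\;+\;|\Pi_B|,
\]
where the surplus $|\Pi_B|$ comes from the $\min\{1,\kappa_B^{-\alpha}(r,v)\}$ summands in \cref{cor:aux_connectivity} (iii), each contributing exactly $1$ per reachable plain vertex. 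The main obstacle is to show that this surplus is fully absorbed by auxiliary edges of $F^{*}$. For each $v\in\Pi_B$, pick $\alpha$ with $\kappa_B^{-\alpha}(r,v)=0$; then by \cref{cor:aux_connectivity} we have $\lambda_{a(B)}^{\alpha,\textrm{ path}}(r,v^{-\alpha})=\min\{1,\kappa_B^{\alpha}(r,v)\}=1$. Now $F^{*}$ must witness this connectivity, but by construction of $a(B)$ the auxiliary edge $v^{+}v^{-}$ is the unique edge of $a(B)$ incident to $v^{-\alpha}$ with sign $\alpha$ there (all original edges at $v^{-\alpha}$ have sign $-\alpha$). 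Hence $v^{+}v^{-}\in E(F^{*})$ for every $v\in\Pi_B$, so $F^{*}$ contains all $|\Pi_B|$ auxiliary edges. Subtracting,
\[
|E(F)|\;=\;|E(F^{*})|-|\Pi_B|\;\leq\;\sum_{v\in V(B)\setminus\{r\}}\sum_{\alpha\in\{+,-\}}\kappa_B^\alpha(r,v)\;=\;\sum_{v\in V(B)\setminus\{r\}}\sum_{\alpha\in\{+,-\}}\kappa_F^\alpha(r,v),
\]
which shows that $F$ is a vertex-flame and completes the proof.
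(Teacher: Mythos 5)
Your proposal follows essentially the same route as the paper: apply the edge-flame theorem to the edge-clean auxiliary graph $a(B)$, take the non-auxiliary edges back into $B$, and absorb the surplus in the edge count by showing that every auxiliary edge at a (reachable) plain vertex is forced into the edge-flame because it is the unique edge of $a(B)$ arriving at $v^{-\alpha}$ with sign $\alpha$. Your preliminary reduction to a path-reachable $B$ with no isolated plain vertices is a harmless variation that lets you avoid the paper's explicit case distinction for the set $\Lambda_B$ of completely unreachable vertices; the bookkeeping then matches the paper's exactly.

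The one step you should tighten is the verification that $\kappa_F^\alpha(r,v)=\kappa_B^\alpha(r,v)$. As written, you route this through the claim that $a(F)$ agrees with $F^{*}$ plus the auxiliary edges, but $a(F)$ is built from the plain vertices of $F$, and you only obtain $\Pi_F=\Pi_B$ \emph{after} the connectivities are known to be preserved — so the argument is circular as stated. The repair is immediate and is what the paper does: take a family of $\kappa_B^\alpha(r,v)$ edge-disjoint proper $r$--$(v^*,\alpha)$~paths in $F^{*}$ (which exists since $F^{*}$ witnesses the connectivities of $a(B)$ and by \cref{cor:aux_connectivity}), and push it through $g_B$; by \cref{thm:aux_disjoint_paths} the images are internally vertex-disjoint $r$--$(v,\alpha)$~paths of $B$, and their non-auxiliary edges all lie in $F$. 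This gives $\kappa_F^\alpha(r,v)\geq\kappa_B^\alpha(r,v)$ directly, and the reverse inequality is trivial since $F\subseteq B$. With that adjustment your proof is complete and coincides with the paper's.
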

\begin{proof}
    By~\cref{cor:reach}, the auxiliary graph $a(B)$ is edge-clean. Thus we can apply~\cref{thm:edge_flame_bidirected} to $a(B)$ and obtain a flame $\hat{F}$ with $\lambda_{\hat{F}}^{\alpha, \textrm{ path}}(r,v)=\lambda_{a(B)}^{\alpha, \textrm{ path}}(r,v)$ for each $ v\in V(a(B)) \setminus \{ r \} $ and $\alpha \in \{+,-\}$. Let ${F}$ be the subgraph of $B$ obtained from $\hat{F}$ by adding any auxiliary edges which may be missing, then contracting all auxiliary edges. We show that $F$ is the desired flame.

    We begin by showing that $\kappa_{{F}}^{\alpha}(r,v)=\kappa_B^{\alpha}(r,v)$ for each $ v\in V(B) \setminus \{ r \} $ and $\alpha \in \{+,-\}$. Let $v \in V(B) \setminus \{r\}$ and $\alpha \in \{+,-\}$ be arbitrary. We set $v^*:= v^\alpha$ if $v \in \Pi_B$ and we let $v^*:= v$ otherwise.
    By~\cref{cor:aux_connectivity} and by the definition of $\hat{F}$, there is a family $\mathcal{P}$ of $\kappa_B^\alpha(r,v)$ edge-disjoint $r$--$(v^*,\alpha)$~paths in $\hat{F}$.
    Then $\{g_B(P): P \in \mathcal{P} \}$ is a family of $\kappa_B^\alpha(r,v)$ internally vertex-disjoint $r$--$(v,\alpha)$~paths in $F$.

    It remains to show that $F$ is a vertex-flame.
    Note that $\Lambda_B:= \{v \in V(B) \setminus \{r\}: \kappa_B^+(r,v)=\kappa_B^-(r,v)=0\}$ is a subset of $\Pi_B$. Let $v \in V(B) \setminus \{r\}$ be arbitrary. By~\cref{cor:aux_connectivity} we can deduce the following:
    \begin{description}
        \item[If $v \in \Lambda_B$]
        \[
        \sum_{\beta \in \{+,-\}} \sum_{\alpha \in \{+,-\}} \lambda_{a(B)}^{\alpha, \textrm{ path}} (r,v^\beta)=0= \sum_{\alpha \in \{+,-\}} \kappa_B^\alpha(r,v).
        \]
        \item[If $v \in \Pi_B \setminus \Lambda_B$]
            There exists  a $\gamma \in \{+,-\}$ such that $\kappa_B^\gamma(r,v)=0$.
            Thus
            \[
            \sum_{\beta \in \{+,-\}} \sum_{\alpha \in \{+,-\}} \lambda_{a(B)}^{\alpha, \textrm{ path}} (r,v^\beta)= 1 + \kappa_B^{-\gamma}(r,v) = 1 + \sum_{\alpha \in \{+,-\}} \kappa_B^\alpha(r,v).
            \]
        \item[If $v \notin \Pi_B$] The vertex $v$ is also a vertex of $a(B)$ and thus:
        \[
            \sum_{\alpha \in \{+,-\}} \lambda_{a(B)}^{\alpha, \textrm{ path}} (r,v)= \sum_{\alpha \in \{+,-\}} \kappa_B^\alpha(r,v).
        \]
            
    \end{description}
    Thus by the choice of $\hat{F}$ we can deduce
    \[
    |E(\hat{F})| \leq \sum_{w \in V(a(B)) \setminus \{r\}} \sum_{\alpha \in \{+,-\}} \lambda_{a(B)}^{\alpha, \textrm{path}}(r,w) = |\Pi_B \setminus \Lambda_B| + \sum_{v \in V(B) \setminus \{r\}} \sum_{\alpha \in \{+,-\}} \kappa_B^\alpha(r,v).
    \]

    Finally, we argue that every auxiliary edge $v^+v^-$ for $v \in \Pi_B \setminus \Lambda_B$ is contained in $\hat{F}$.
    Let $v \in \Pi_B \setminus \Lambda_B$ be arbitrary. Then there is $\gamma \in \{+,-\}$ such that $\kappa_B^\gamma(r,v) \geq 1$. Thus $\lambda_B^{\gamma, \textrm{ path}}(r, v^{-\gamma}) = 1$ by~\cref{cor:aux_connectivity}. The auxiliary edge corresponding to $v$ is the unique edge of $a(B)$ ending in $v^{-\gamma}$ with sign $\gamma$ and thus has to be contained in $\hat{F}$.
    This implies
    \[
    |E(F)| \leq |E(\hat{F})| - |\Pi_B \setminus \Lambda_B| \leq \sum_{v \in V(B) \setminus \{r\}} \sum_{\alpha \in \{+,-\}} \kappa_B^\alpha(r,v)
    \]
    and completes the proof.
\end{proof}

\subsection{Pym's theorem for internally vertex-disjoint paths}
    \begin{theorem}\label{thm:pym-helper}
        Let $B$ be a clean bidirected graph rooted at $r$, let $x \in V(B) \setminus \{r\}$ and
        let $\mathcal{P}$ and $\mathcal{Q}$ be sets of internally vertex-disjoint $r$--$x$~paths in $B$. Then there is a set $\mathcal{R}$ of internally vertex-disjoint $r$--$x$ paths in $B$ such that $\mathcal{R}$ contains the first edges of $\mathcal{P}$ and the last edges of~$\mathcal{Q}$.
    \end{theorem}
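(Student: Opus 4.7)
I would mirror the proof of \cref{thm:edge-pym-bidirected} by lifting $\mathcal{P}$ and $\mathcal{Q}$ to the auxiliary graph $a(B)$, applying the edge-disjoint Pym theorem there, and translating the resulting family back to $B$. If either $\mathcal{P}$ or $\mathcal{Q}$ is empty, the other family already works, so I may assume both are nonempty; in particular, at least one $r$--$x$~path exists in $B$.

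First I would choose a canonical target $x^* \in \mathbf{a}(x)$ through which every $r$--$x$~path of $B$ is forced to pass when lifted. If $x \notin \Pi_B$, set $x^* := x$. Otherwise, by the plainness of $x$ there is some $\alpha \in \{+,-\}$ with $\kappa_B^{-\alpha}(r,x) = 0$, and I would set $x^* := x^\alpha$, consistent with the choice made in the proof of \cref{thm:strong_menger_vertex}. By \cref{lem:aux_path_bijection} together with this choice, every $r$--$x$~path of $B$ is $g_B(P)$ for a unique proper $r$--$x^*$~path $P$ of $a(B)$. Applying \cref{thm:aux_disjoint_paths}, the lifted families $\mathcal{P}^* := g_B^{-1}(\mathcal{P})$ and $\mathcal{Q}^* := g_B^{-1}(\mathcal{Q})$ are edge-disjoint collections of proper $r$--$x^*$~paths in $a(B)$. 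Since $r$ is not split and proper paths do not end in auxiliary edges, the contraction $g_B$ identifies the first edges of $\mathcal{P}^*$ with those of $\mathcal{P}$ and the last edges of $\mathcal{Q}^*$ with those of $\mathcal{Q}$.

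Next I would note that $a(B)$ is edge-clean by \cref{cor:reach}, and apply \cref{thm:edge-pym-bidirected} in $a(B)$ rooted at $r$ with endpoint $x^*$ to obtain a family $\mathcal{R}^*$ of edge-disjoint $r$--$x^*$~paths covering the first edges of $\mathcal{P}^*$ and the last edges of $\mathcal{Q}^*$. The step I expect to require the most care is ensuring that every path in $\mathcal{R}^*$ is proper, so that it descends to an $r$--$x$~path of $B$ via $g_B$. If $x \notin \Pi_B$ this is automatic since $x^* = x$ has no incident auxiliary edge. If $x^* = x^\alpha$ with $x \in \Pi_B$, suppose for contradiction that some $P \in \mathcal{R}^*$ ended with the auxiliary edge $x^+x^-$; then deleting that last edge would produce an $r$--$x^{-\alpha}$~path in $a(B)$ and hence, by \cref{lem:aux_path_bijection}, an $r$--$(x,-\alpha)$~path in $B$, contradicting $\kappa_B^{-\alpha}(r,x) = 0$.

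Finally I would set $\mathcal{R} := \{g_B(P) : P \in \mathcal{R}^*\}$. By \cref{thm:aux_disjoint_paths} (applied to each proper path individually and jointly, using that $\mathcal{R}^*$ is edge-disjoint), $\mathcal{R}$ is a family of internally vertex-disjoint $r$--$x$~paths in $B$. Because $g_B$ merely contracts auxiliary edges while preserving the first edge at $r$ and the last edge at $x^*$ of every proper path, the first edges of $\mathcal{R}$ cover the first edges of $\mathcal{P}$ and the last edges of $\mathcal{R}$ cover the last edges of $\mathcal{Q}$, giving the desired family.
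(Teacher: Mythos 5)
Your proposal is correct and follows essentially the same route as the paper's proof: choose $x^*$ according to whether $x$ is plain, lift $\mathcal{P}$ and $\mathcal{Q}$ via $g_B^{-1}$ to proper $r$--$x^*$~paths in the edge-clean auxiliary graph $a(B)$, apply \cref{thm:edge-pym-bidirected} there, and push the result back down with $g_B$ using \cref{thm:aux_disjoint_paths}. Your explicit check that the paths returned by the edge version are proper (no path can end in the auxiliary edge $x^+x^-$, since that would yield an $r$--$(x,-\alpha)$~path contradicting $\kappa_B^{-\alpha}(r,x)=0$) is a detail the paper leaves implicit, and it is argued correctly.
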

    \noindent
    Note that \cref{thm:pym-helper} fails in general for non-clean bidirected graphs (see~\cref{fig:pym}).
    \begin{proof}
        If $x \in \Pi_B$, then there exists an $\alpha \in \{+,-\}$ such that $\kappa_B^{-\alpha}(r,x) = 0$, and we let $x^*:= x^\alpha$.
        Otherwise, set $x^*:= x$.
        Note that $g_B^{-1}$ maps every $r$--$x$~path in $B$ to an $r$--$x^*$~proper path in $a(B)$ by the choice of $x^*$.
        Thus $g_B^{-1}$ turns $\mathcal{P}$ and $\mathcal{Q}$ into sets $\Tilde{\mathcal{P}}$ and $\Tilde{\mathcal{Q}}$ of $r$--$x^*$~proper paths such that the first edges of $\mathcal{P}$ and $\Tilde{\mathcal{P}}$ are the same and the last edges of $\mathcal{Q}$ and $\Tilde{\mathcal{Q}}$ are the same.
        
        By~\cref{cor:reach}, the auxiliary graph $a(B)$ is edge-clean. Thus we can apply~\cref{thm:edge-pym-bidirected} to $\Tilde{\mathcal{P}}$ and $\Tilde{\mathcal{Q}}$ in $a(B)$ to obtain a set $\Tilde{\mathcal{R}}$ of proper $r$--$x^*$~paths such that $\Tilde{\mathcal{R}}$ contains the first edges of $\Tilde{\mathcal{P}}$ and the last edges of $\Tilde{\mathcal{Q}}$. Then $g_B$ maps $\Tilde{\mathcal{R}}$ to the desired set $\mathcal{R}$ of $r$--$x$~paths.
    \end{proof}

    \subsection{From internally vertex-disjoint to vertex-disjoint}
    Given two sets of vertices $X$ and $Y$, a path $P$ is an $\emph{$X$--$Y$~path}$ if the first vertex of $P$ is the unique vertex of $P$ in $X$ and the last vertex of $P$ is the unique vertex of $P$ in $Y$.
    \begin{corollary}\label{thm:set-menger}
    Let $B$ be a bidirected graph and let $X, Y$ be sets of vertices of $B$ such that there is no nontrivial path in $B$ with both endpoints in $X$.
    Then the maximum number of vertex-disjoint $X$--$Y$~paths in $B$ is equal to
    $
    \min \left\lbrace  \left| \epsilon_{X,B} (W)
            \right|\ :\ 
             Y \subseteq W \subseteq V(B) \right\rbrace
    $.
\end{corollary}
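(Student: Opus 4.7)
The plan is to reduce \cref{thm:set-menger} to the rooted vertex-Menger theorem \cref{thm:strong_menger_vertex} by embedding $B$ into an auxiliary rooted bidirected graph $B^*$ with a single source $r$ and a single sink $x$. I will build $B^*$ by adjoining to $B$ two new vertices $r$ and $x$, and for each $v\in X$ two auxiliary ``gate'' vertices $g_{v,+}, g_{v,-}$ with edges $rg_{v,\alpha}$ and $g_{v,\alpha}v$, and symmetrically for each $w\in Y$ two gates $h_{w,+}, h_{w,-}$ with edges $wh_{w,\alpha}$ and $h_{w,\alpha}x$. The signs are chosen so that every edge incident with $r$ (respectively $x$) has sign $+$ at $r$ (respectively $x$)---so no trail can transition through $r$ or $x$---and so that traversing the gate $g_{v,\alpha}$ into $v$ fixes the sign of the incoming edge at $v$ to be $\alpha$, making both sign-continuations out of $v$ into $B$ realizable via an appropriate choice of gate. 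In particular there is no edge between $r$ and $x$ in $B^*$.

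Next I will verify that $B^*$ is clean. A nontrivial $r$-$r$ almost path in $B^*$ must begin $r, rg_{v,\alpha}, g_{v,\alpha}, g_{v,\alpha}v, v, \ldots$ and end $\ldots, v', g_{v',\alpha'}v', g_{v',\alpha'}, rg_{v',\alpha'}, r$ for some $v, v'\in X$. The almost-path condition forces all internal vertices to be distinct, so in particular $v\neq v'$, and the middle segment in $B$ from $v$ to $v'$ is a nontrivial $v$-$v'$ path with both endpoints in $X$, contradicting the hypothesis. Hence \cref{thm:strong_menger_vertex} applies to $B^*$ with source $r$ and target $x$.

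On the path side, any family of $k$ internally-vertex-disjoint $r$-$x$ paths in $B^*$ yields $k$ vertex-disjoint $X$-$Y$ paths in $B$: for each such path, shorten its middle segment in $B$ to go from its first $X$-vertex to the next vertex in $X\cup Y$; the absence of nontrivial $X$-$X$ paths in $B$ forces that next vertex to lie in $Y$, yielding an honest $X$-$Y$ path. The converse lifting---prepending and appending the matching gate segments---is immediate from the sign-choice at the gates. Hence $\kappa_{B^*}(r,x)$ equals the maximum number of vertex-disjoint $X$-$Y$ paths in $B$. For cuts, given $W$ with $Y\subseteq W\subseteq V(B)$, the set $X^* = W\cup\{x\}\cup\{h_{w,\alpha}: w\in Y, \alpha\in\{+,-\}\}$ is a valid cut in $B^*$ whose size matches $|\epsilon_{X,B}(W)|$ (the gates are not in $\epsilon_{r,B^*}(X^*)$ because their unique in-neighbors $w\in Y$ lie in $X^*$), and conversely any minimum cut $X^*$ in $B^*$ can be modified to contain $Y$ and to exclude $X\setminus Y$ without increasing its size, producing $W = (X^*\cap V(B))\cup Y$ with $|\epsilon_{X,B}(W)|\leq |\epsilon_{r,B^*}(X^*)|$. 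Combined with the Menger identity for $B^*$ this proves the theorem.

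The main obstacle I expect is the cut correspondence, specifically verifying that an optimal $X^*$ in $B^*$ can always be modified to exclude $X\setminus Y$ without increasing its size. The key observation is that any $v\in X\cap X^*$ is automatically in $\epsilon_{r,B^*}(X^*)$ thanks to its gate edges $g_{v,\alpha}\to v$, while the $X$-$X$ path hypothesis must be used to show that removing such a $v$ from $X^*$ cannot create compensating new cut elements among its neighbors in $B$.
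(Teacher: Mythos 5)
Your overall strategy is the same as the paper's: attach a new root $r$ joined to $X$ and a new sink joined to $Y$, verify that the resulting rooted bidirected graph is clean using the hypothesis that $B$ has no nontrivial path with both endpoints in $X$, and then invoke \cref{thm:strong_menger_vertex}. However, your gadget destroys exactly the cleanness you need, and this is a genuine gap. For each $v\in X$ you attach two gates $g_{v,+}$ and $g_{v,-}$ whose edges into $v$ carry opposite signs at $v$ --- this is unavoidable, since the whole purpose of having two gates is to make both signs of entry at $v$ available. But then the walk $r,\,g_{v,+},\,v,\,g_{v,-},\,r$ is a trail in $B^*$: its four edges are distinct, the signs alternate at $g_{v,+}$ and at $g_{v,-}$ because each segment $r$--$g_{v,\alpha}$--$v$ is a trail by construction (and trails may be reversed), and they alternate at $v$ precisely because $g_{v,+}v$ and $g_{v,-}v$ have opposite signs there. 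Removing its last edge leaves a path, so this is a nontrivial $r$--$r$~almost path and $B^*$ is \emph{not} clean. Your cleanness argument claims the almost-path condition forces the first and last $X$-vertices $v,v'$ to be distinct, but that is exactly what fails: $v=v'$ is realised with the two distinct gates as internal vertices. Since \cref{thm:strong_menger_vertex} is precisely one of the statements that is false without cleanness, you cannot apply it to $B^*$. The failure is substantive, not cosmetic: every $v\in X$ is non-plain in $B^*$ yet $\kappa_{B^*}(r,v)=2$ via its two gates, contradicting \cref{lem:non_plain_vertex}, which holds in every clean rooted bidirected graph.

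The repair is the paper's construction: do not offer both entry signs at $v$ through two gates. Instead, first perform sign-switches at the vertices of $X$ so that all edges of $B$ incident with a vertex $x\in X$ have sign $+$ at $x$ (this affects neither $X$--$Y$~paths nor $X$--$v$~paths), and then add a \emph{single} edge $rx$ with sign $-$ at $x$ for each $x\in X$; analogously add a single new sink $z$ adjacent to $Y$. Now a nontrivial $r$--$r$~almost path must use two distinct edges at $r$, hence pass through two distinct vertices of $X$, and its middle segment is a forbidden nontrivial path of $B$ with both endpoints in $X$. As a secondary point, your cut translation is only sketched and you flag the back direction yourself; note that $W$ is merely required to satisfy $Y\subseteq W\subseteq V(B)$, so there is no need to force $W$ to exclude $X\setminus Y$ --- you only need to push the gate vertices and the new sink out of the cut returned by \cref{thm:strong_menger_vertex}, which simplifies that step considerably.
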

\noindent
Here, $\epsilon_{X,B} (W)$ is the set of vertices $v \in W$ for which there is an $X$--$\{v\}$~path that is either trivial or whose second to last vertex is not in $W$. 

    \begin{corollary}
	\label{thm:set-pym}
	Let $B$ be a bidirected graph, let $X$ and $Y$ be sets of vertices of $B$ such that there is no nontrivial path in $B$ with both endpoints in $X$, and let $\mathcal{P}, \mathcal{Q}$ be sets of vertex-disjoint $X$--$Y$~paths.
    Then there is a set $\mathcal{R}$ of vertex-disjoint $X$--$Y$ paths such that the set of first vertices of the paths in $\mathcal{P}$ is a subset of the set of first vertices of the paths in $\mathcal{R}$ and the set of last vertices of the paths in $\mathcal{Q}$ is a subset of the set of last vertices of the paths in $\mathcal{R}$.
\end{corollary}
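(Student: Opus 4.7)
The plan is to deduce \cref{thm:set-pym} from \cref{thm:pym-helper} by building an auxiliary rooted clean bidirected graph $B'$ from $B$ whose internally vertex-disjoint $r$-$t$~paths correspond bijectively to the vertex-disjoint $X$-$Y$~paths of $B$, with the first and last edge of each $r$-$t$~path determining the first and last vertex of the corresponding $X$-$Y$~path. To construct $B'$, first split each $w\in Y$ into two copies $w^+,w^-$ by moving every $B$-edge at $w$ of sign $\alpha$ to the copy $w^\alpha$ (carrying sign $\alpha$ at $w^\alpha$), in the style of the auxiliary graph $a(B)$. Then add a new root $r$ and, for every $v\in X$, an intermediate vertex $v^*$ with a single edge $rv^*$ (sign $+$ at $v^*$) together with two edges $v^*v$, both of sign $-$ at $v^*$ but of opposite signs at $v$, so that a path from $r$ via $v^*$ may continue at $v$ with either sign. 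Symmetrically, add a target $t$ and, for every $w\in Y$, an intermediate vertex $w^*$ with edges $w^+w^*$, $w^-w^*$ and $w^*t$; arrange all signs at $t$ to be $+$ so that $t$ cannot occur as an internal vertex of any trail.

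To show $B'$ is clean, observe that any nontrivial $r$-$r$~trail must use two distinct edges $rv_1^*$ and $rv_2^*$, hence $v_1\neq v_2\in X$. From the sign arrangement, no $v'^*$ for $v'\in X\setminus\{v_1,v_2\}$, no $w^*$, and no split vertex $w^\gamma$ can occur as an internal vertex of such a trail without forcing it to terminate at $r$ via a wrong edge or at the dead-end $t$; in particular, the splitting of $Y$ blocks any $r$-$r$~trail from passing internally through a $Y$-vertex. Hence the middle of the trail projects to a $v_1$-$v_2$~walk in $B$ whose internal vertices all lie in $V(B)\setminus(X\cup Y)$; if the trail is an almost path, this projection is a genuine $v_1$-$v_2$~path in $B$ between two distinct $X$-vertices, contradicting the hypothesis of \cref{thm:set-pym}. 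The same sign analysis applied to $r$-$t$~paths shows that no $r$-$t$~path in $B'$ has an internal $X$- or $Y$-vertex.

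With this in hand, each vertex-disjoint family of $X$-$Y$~paths in $B$ lifts to an internally vertex-disjoint family of $r$-$t$~paths in $B'$ by prepending the gadget $r,v^*$ (using the $v^*v$-edge whose sign at $v$ alternates with the first $B$-edge of the path) and symmetrically appending $w^\beta w^*,t$, and every $r$-$t$~path in $B'$ arises this way from a unique $X$-$Y$~path in $B$. The shared intermediate vertices $v^*$ and $w^*$ encode shared start and end vertices, so internal vertex-disjointness in $B'$ corresponds exactly to vertex-disjointness in $B$, while the first edge $rv^*$ (last edge $w^*t$) of a lifted path uniquely determines the first (last) vertex of its projection. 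We then apply \cref{thm:pym-helper} to $B'$ with the lifted families $\mathcal{P}',\mathcal{Q}'$ and pull the resulting family of internally vertex-disjoint $r$-$t$~paths back to obtain the desired $\mathcal{R}$.

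The hardest part will be the joint design of the intermediate vertices $v^*, w^*$ and the splitting of $Y$. Directly attaching two edges $rv$ of opposite signs for each $v\in X$ would immediately create a short $r$-$r$~almost path of length two, so the intermediate vertex $v^*$ is indispensable. Without the splitting of $Y$, some $r$-$t$~paths in $B'$ could pass through $Y$-vertices internally and therefore fail to correspond to valid $X$-$Y$~paths in $B$; this cannot be forbidden directly since no $Y$-$Y$ hypothesis is available. The $X$-$X$ hypothesis of \cref{thm:set-pym} is precisely what rules out the remaining longer $r$-$r$~almost paths once the short ones and the $Y$-internal passages are eliminated.
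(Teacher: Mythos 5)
Your reduction is sound and follows the same top-level strategy as the paper: attach a root $r$ to $X$ and a sink to $Y$, verify that the resulting rooted bidirected graph is clean using the no-nontrivial-$X$--$X$-path hypothesis, apply \cref{thm:pym-helper}, and pull the family back. Where you differ is in the implementation of the attachment. The paper first performs sign-switches at every $x\in X$ and (for this corollary) every $y\in Y$ so that all edges of $B$ at these vertices get sign $+$ there; since sign-switches do not change what counts as a path, this is free, and afterwards a \emph{single} new edge $rx$ with sign $-$ at $x$ already allows an $r$-path to continue from $x$ along every edge of $B$, while a single edge $yz$ with sign $-$ at $y$ and all signs $+$ at the new sink $z$ simultaneously prevents $r$--$z$~paths from passing through $Y$ internally and makes $z$ a dead end. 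Your gadgets --- the intermediate vertices $v^*$ with a two-edge fan at $v$, the splitting of the $Y$-vertices, and the vertices $w^*$ --- achieve exactly these two effects without the normalisation, at the cost of a longer construction and cleanness check; what the paper's route buys is brevity, what yours buys is nothing extra, but it is correct. One point needs repair: sign analysis alone does \emph{not} show that an $r$--$t$~path has no internal $X$-vertex, since a path may enter and leave some $v'\in X$ along two ordinary $B$-edges of opposite sign at $v'$. What excludes this is again the hypothesis: the initial segment of the projected path from its first vertex $v\in X$ to $v'$ would be a nontrivial path with both endpoints in $X$. You need this exclusion so that the projection of each path returned by \cref{thm:pym-helper} is a genuine $X$--$Y$~path, so it should be justified this way rather than by the sign argument. (Both your sketch and the paper's proof leave the degenerate case $X\cap Y\neq\emptyset$, i.e.\ trivial $X$--$Y$~paths, implicit.)
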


    \begin{proof}[Proof of~\Cref{thm:set-menger,thm:set-pym}]
        We may assume that for every $x \in X$, all edges incident with $x$ have sign $+$ at $x$ since this does not affect what counts as an $X$--$Y$~path or as an $X$--$v$~path. For~\cref{thm:set-pym}, we assume further that all edges incident with $y \in Y$ have sign $+$ at $y$ since this does not affect $X$--$Y$~paths.
        
        Let $B'$ be the auxiliary graph constructed from $B$ in the following way. First, we add a vertex $r$ to $B$, and add for each $x \in X$ an edge $rx$ with sign $-$ at $x$ and sign $+$ at $r$. Second, we add a vertex $z$ to $B$, and for each vertex $y$ in $V(\Y)$, we add edges between $y$ and $z$ with signs $-$ and $+$ at $y$ and $z$ respectively.

        Since every nontrivial $r$--$r$~almost path in $B'$ contains a nontrivial $X$--$X$~path of $B$, the auxiliary graph $B'$ rooted at $r$ is clean.
        Furthermore, every $X$--$Y$~path extends to an $r$--$z$~path by adding edges incident with $r$ and $z$.
        Conversely, every $r$--$z$~path $P$ contains an $X$--$Y$~path. Additionally, if all edges incident with $y \in Y$ have the same sign at $y$, $P - r - z$ is an $X$--$Y$~path.
        Thus by applying~\Cref{thm:strong_menger_vertex,thm:pym-helper} to $B'$ and the vertices $r, z$ we obtain~\Cref{thm:set-menger,thm:set-pym}.
    \end{proof}

\section{The vertex-decomposition}\label{sec: vertex-decomp}
In this section we introduce another decomposition for rooted bidirected graphs which has similar properties for internally vertex-disjoint paths to those which the edge-decomposition has for edge-disjoint paths. 
%which represents the internally vertex-disjoint paths starting in the root in a similar way as the edge-decomposition does for edge-disjoint paths. If the bidirected graph is clean, then every family of internally
The main advantage of the vertex-decomposition is that for these properties we only have to demand cleanness rather than edge-cleanness.
%
% We will show that the vertex-decomposition has the following property for every clean bidirected graph $B$: the families of internally vertex-disjoint rooted paths in $B$ correspond one-to-one to families of internally vertex-disjoint rooted paths in the decomposition graph of $B$.

Recall that a bidirected graph $B$ rooted at $r$ is reachable if $B$ contains an $r$--$e$~almost path for every $e \in E(B)$ and an edge $e \in E(B)$ is directable if there is a unique orientation $\ve$ of $e$ (called the natural orientation) such that $B$ contains an $r$--$\ve$~almost path.
An edge $e \in E(B)$ is {\em undirectable} if $B$ contains both an $r$--$\ve$~almost path and an $r$--$\ev$~almost path.
The {\em undirectable components} of $B$ are the nontrivial connected components of the (underlying undirected) subgraph of $B$ induced by the set of undirectable edges.

Before we define the vertex-decomposition, we show that every undirectable component has a unique vertex in $\Pi_B \cup \{r\}$. 
\begin{lemma}\label{lem:comparing_components}
Let $B$ be a reachable bidirected graph rooted at $r$ and let $F \subseteq E(a(B))$.
Then $F$ is the edge set of an undirectable component of $B$ if and only if $F$ is the edge set of a trail-undirectable component of $a(B)$.
\end{lemma}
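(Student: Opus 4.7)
The plan is to reduce the statement to Corollary~\ref{cor:reach}\ref{itm:reach_3}. First I would show that every auxiliary edge of $a(B)$ is trail-directable; then I would verify that the natural correspondence between edges of $B$ and non-auxiliary edges of $a(B)$ preserves the relevant connectivity.

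For the first step, fix $v \in \Pi_B$ and, without loss of generality, suppose there is no $r$--$(v,-)$~path in $B$. Assume for contradiction that the auxiliary edge $v^+v^-$ is trail-undirectable in $a(B)$; in particular, there exists an $r$--$\vec{v^-v^+}$~trail. By Lemma~\ref{lem:aux-graph-trail-to-almost path}, there even exists an $r$--$\vec{v^-v^+}$~almost path $R = P + (v^+v^-)$, whose path prefix $P$ ends at $v^-$ with sign $-$ (the sign opposite to the sign $+$ of the auxiliary edge at $v^-$). Since $P$ is a path whose last vertex is $v^-$ and $v^+v^-$ is incident with $v^-$, $P$ cannot contain $v^+v^-$, for otherwise $v^-$ would appear twice in $P$. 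Hence $P$ is a proper $r$-path in $a(B)$, and by Lemma~\ref{lem:aux_path_bijection}, $g_B(P)$ is an $r$--$(v,-)$~path in $B$, contradicting $v \in \Pi_B$.

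Combining this with Corollary~\ref{cor:reach}\ref{itm:reach_3}, the trail-undirectable edges of $a(B)$ are exactly the undirectable edges of $B$, as a subset of $E(a(B))$. For the connectivity matching, the only delicate case is two undirectable edges $e_1, e_2$ of $B$ meeting at a vertex $v \in \Pi_B$ (if $v \notin \Pi_B$, they automatically share $v$ in $a(B)$). I would argue that when $v \in \Pi_B$ has no $r$--$(v,-)$~path, every undirectable edge at $v$ must have sign $-$ at $v$: otherwise the orientation of such an edge with $v$ as tail would have, in its witnessing almost path, a path prefix ending at $v$ with sign $-$, contradicting $v \in \Pi_B$. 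Hence $e_1$ and $e_2$ are both incident with $v^-$ in $a(B)$ and share $v^-$ there. The reverse direction (shared vertex in $a(B)$ implies shared vertex in $B$) is immediate from the construction of $a(B)$.

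The main obstacle is the first step, namely ruling out trail-undirectability of auxiliary edges; once this is accomplished, the remainder is bookkeeping using the correspondence results developed in Section~\ref{sec: from edge to vertex}.
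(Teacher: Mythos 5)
Your proposal is correct and follows essentially the same route as the paper's proof: first ruling out trail-undirectable auxiliary edges via \cref{lem:aux-graph-trail-to-almost path} and \cref{lem:aux_path_bijection}, then invoking \cref{cor:reach} to match the (trail-)undirectable edge sets, and finally checking that incidence of undirectable edges is preserved (the paper phrases the last step contrapositively, deducing from two undirectable edges of opposite sign at $c$ that $c\notin\Pi_B$, but this is the same observation as your ``all undirectable edges at a plain vertex carry the missing sign'').
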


\begin{proof}
We begin by proving that none of the auxiliary edges of $a(B)$ are trail-undirectable:
Suppose for a contradiction that there is a trail-undirectable auxiliary edge $v^+v^-$ in $a(B)$ for some $v \in \Pi_B$.
Then there is an $r$--$\vec{v^+v^-}$~almost path and an $r$--$\cev{v^+v^-}$~almost path in $a(B)$ by \cref{lem:aux-graph-trail-to-almost path}.
Thus there is an $r$--$(v, +)$ and an $r$--$(v, -)$~path in $B$ by \cref{lem:aux_path_bijection}, contradicting that $v \in \Pi_B$.

By~\cref{cor:reach}, every edge $e \in E(B)$ is undirectable in $B$ if and only if $e$ is trail-undirectable in $a(B)$.
Thus the set of undirectable edges of $B$ is the same as the set of trail-undirectable edges of $a(B)$.

It now suffices to show that two undirectable edges are incident in $B$ if and only if they are incident in $a(B)$.
Let $e$ and $f$ be two undirectable edges that are incident in $B$, and let $c \in V(B)$ be a common endpoint of $e$ and $f$.
If $e$ and $f$ have the same sign at $c$, then they are incident in $a(B)$.
Otherwise, there is an $r$--$(c,+)$~path and an $r$--$(c,-)$~path in $B$ since $e$ and $f$ are undirectable in $B$.
Thus $c \notin \Pi_B$, which implies that $c$ is a vertex of $a(B)$ and thus common endpoint of $e$ and $f$ in $a(B)$.
Conversely, if two edges of $E(B)$ are incident in $a(B)$, they are incident in $B$.
\end{proof}

\begin{lemma}\label{lem:trail-solid-aux-graph}
    Let $B$ be a reachable bidirected graph rooted at $r$. Then $\{v^+, v^-: v \in \Pi_B\} \cup \{r\}$ is the set of trail-solid vertices of $a(B)$.
\end{lemma}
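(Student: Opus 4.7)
My plan is to apply \cref{cor:trail-solid} vertex-by-vertex on $a(B)$, using \cref{lem:comparing_components} to identify the trail-undirectable components, \cref{lem:aux_path_bijection} to translate between $r$-paths of $B$ and proper $r$-paths of $a(B)$, \cref{cor:reach} to transfer (un)directability between $B$ and $a(B)$, and \cref{lem:aux-graph-trail-to-almost path} to reduce $r$-trails of $a(B)$ to $r$-almost paths. The root $r$ is trail-solid trivially, so the work lies in the other two classes.

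For $v \in V(B) \setminus (\Pi_B \cup \{r\})$, the $r$--$(v,+)$ and $r$--$(v,-)$ paths in $B$ witnessing that $v$ is not plain lift via $g_B^{-1}$ to proper $r$-paths in $a(B)$, whose last edges together force some edge at $v$ to be trail-undirectable in $a(B)$. To rule out $v = c_i$, I would suppose some $\vec{f}$ is trail-directable in $a(B)$ with head $v$; by \cref{cor:reach}, $f$ is directable in $B$, and concatenating $\cev{f}$ with the $r$-path ending at $(v,-\sigma(f,v))$ in $B$ produces an $r$-almost path realising the opposite orientation of $f$, a contradiction. Hence $v$ lies in a trail-undirectable component of $a(B)$ but is not its centre, so is not trail-solid.

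For $v \in \Pi_B$, assume without loss of generality that there is no $r$--$(v,+)$ path in $B$. The argument for $v^-$ is short: if a $-$-edge $g$ at $v$ were undirectable in $B$, then the $r$-almost path realising $\cev{g}$ would have to end at $(v, +)$ for sign compatibility with $\cev{g}$ at $v$, contradicting $v \in \Pi_B$; and auxiliary edges are never trail-undirectable by the argument in the proof of \cref{lem:comparing_components}. Hence $v^-$ has no incident trail-undirectable edge and is trail-solid. For $v^+$, if $v^+$ has no incident trail-undirectable edge we are done; otherwise some $+$-edge $e_0$ at $v$ is undirectable in $B$ and places $v^+$ in a trail-undirectable component $C$ of $a(B)$. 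I will show that the auxiliary edge $v^+v^-$ has natural orientation $\vec{v^-v^+}$, so that by the uniqueness clause of \cref{lem:component_properties} it coincides with the entering edge $\vec{f}_i$ of $C$ and $c_i = v^+$, which makes $v^+$ trail-solid.

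The main obstacle is verifying the natural orientation of the auxiliary edge. I plan to suppose for contradiction that it is $\vec{v^+v^-}$. Since $e_0$ is trail-undirectable, the orientation $\vec{e}_0$ with head $v^+$ admits an $r$-trail, and \cref{lem:aux-graph-trail-to-almost path} upgrades it to an $r$-almost path $T' = P \circ \vec{e}_0$ in $a(B)$. I will then split into three cases according to how $P$ meets $v^+$: if $P$ avoids $v^+$, then $T'$ itself is a proper $r$-path ending at $(v^+,+)$ and $g_B(T')$ is an $r$--$(v,+)$ path in $B$; if $P$ meets $v^+$ before $v^-$, then the initial segment of $P$ up to the first occurrence of $v^+$ is already a proper $r$-path ending at $(v^+, +)$, and $g_B$ again produces an $r$--$(v, +)$ path in $B$; and if $P$ reaches $v^+$ from $v^-$ through the auxiliary edge, then the initial segment of $P$ up to and including that use of the auxiliary edge is an $r$-trail ending with $\vec{v^-v^+}$, contradicting the assumed natural orientation $\vec{v^+v^-}$. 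The trichotomy is exhaustive because any internal visit of $P$ to $v^+$ uses two edges of opposite signs at $v^+$, and the only $-$-sign edge at $v^+$ is the auxiliary edge. Since each of the three cases leads to a contradiction, the auxiliary edge must have natural orientation $\vec{v^-v^+}$, finishing the proof.
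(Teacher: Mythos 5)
Your proof is correct, and it reaches the conclusion by a genuinely different route from the paper's in the two non-trivial cases. The overall skeleton is the same (check the conditions of \cref{cor:trail-solid} separately for $r$, for the split vertices $v^{\pm}$ with $v \in \Pi_B$, and for the non-plain vertices), but the details differ. For non-plain $v$, the paper simply cites \cref{cor:aux_connectivity} together with \cref{lem:trail-solid-one-sign}; your direct argument (extract a trail-undirectable edge at $v$ from the two oppositely signed $r$--$v$~paths, then rule out $v$ being the head of any natural orientation by concatenating $\cev{f}$ onto the $r$--$(v,-\sigma(f,v))$~path) essentially re-proves the content of \cref{lem:trail-solid-one-sign} inside $a(B)$ — sound, but replaceable by a citation. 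The larger divergence is in the $\Pi_B$ case: the paper's trick is to take as its WLOG the natural orientation of the auxiliary edge itself (which is trail-directable by \cref{lem:comparing_components}), say $\vec{v^+v^-}$; then $v^-$ is trail-solid as the head of a trail-directable edge, and $v^+$ is trail-solid because its only negatively signed incident edge is the auxiliary one, so no $r$-trail can leave $v^+$ through a non-auxiliary edge and every edge at $v^+$ is trail-directable. This costs three lines and no path surgery. Your WLOG is instead on the unreachable sign of $v$, which forces you to \emph{determine} the natural orientation of the auxiliary edge from that datum, via \cref{lem:aux-graph-trail-to-almost path} and the three-way analysis of how $P$ meets $v^+$. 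That analysis is correct — the exhaustiveness argument, that any internal visit of a path to $v^+$ must use the unique negatively signed edge there, namely the auxiliary edge, is exactly the right observation — and it yields the slightly stronger fact that the natural orientation of $v^+v^-$ points towards the path-reachable sign of $v$; but it is considerably more work than the lemma requires. One small presentational point: to conclude $v^+ = c_i$ you invoke the uniqueness clause of \cref{lem:component_properties}, which formally presupposes $r \notin V(C_i)$; it is cleaner to cite the second bullet of \cref{cor:trail-solid} directly, which already absorbs the root case.
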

\begin{proof}
    The vertex $r$ is trail-solid by definition. We show that the vertices in $\{v^+, v^-: v \in \Pi_B\}$ are trail-solid. Let $v \in \Pi_B$ be arbitrary. By~\cref{lem:comparing_components}, the auxiliary edge $v^+v^-$ is directable. Up to symmetry, we assume without loss of generality that $\vec{v^+v^-}$ is the natural orientation of $v^+v^-$. Then $v^-$ is trail-solid by~\cref{cor:trail-solid}. Furthermore, for every edge $e \neq v^+v^-$ incident with $v^+$ there is no $r$--$\ve$~trail, where $\ve$ has tail $v$, since there is no $r$--$\cev{v^+v^-}$~trail. This implies that all edges incident with $v^+$ are directable and thus $v^+$ is trail-solid by~\cref{cor:trail-solid}.

    Now let $w \in V(a(B)) \setminus (\{v^+, v^-: v \in \Pi_B\} \cup \{r\}) = V(B) \setminus (\Pi_B \cup \{r\})$. Then $\kappa_B^\alpha(r,w) \geq 1$ for every $\alpha \in \{+,-\}$ by definition of $\Pi_B$. By~\cref{cor:aux_connectivity}, $\lambda_{a(B)}^\alpha(r,w) \geq 1$ for every $\alpha \in \{+,-\}$. Then~\cref{lem:trail-solid-one-sign} implies that $w$ is not trail-solid in $a(B)$.
\end{proof}

\begin{corollary}\label{cor:vtx-decomp-unique-solid}
    Every undirectable component of a rooted reachable bidirected graph contains a unique vertex in $\Pi_B \cup \{r\}$.
\end{corollary}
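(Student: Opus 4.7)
My plan is to reduce the statement to what has already been proved about the edge-decomposition of the auxiliary graph $a(B)$. The key correspondence is given for free by~\cref{lem:comparing_components} and~\cref{lem:trail-solid-aux-graph}: undirectable components of $B$ are exactly trail-undirectable components of $a(B)$ on the same edge set, and the trail-solid vertices of $a(B)$ are precisely $\{v^+,v^-:v\in\Pi_B\}\cup\{r\}$. So the content of the corollary is essentially that each trail-undirectable component of $a(B)$ contains a unique trail-solid vertex, which is the existence-and-uniqueness statement already encoded in~\cref{lem:component_properties}, \cref{lem:component_properties_root}, and~\cref{cor:trail-solid}.

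Given an undirectable component $C$ of $B$, I would first invoke~\cref{lem:comparing_components} to obtain a trail-undirectable component $C'$ of $a(B)$ with $E(C')=E(C)$. Then, depending on whether $r\in V(C')$, I apply~\cref{lem:component_properties} or~\cref{lem:component_properties_root} in $a(B)$, combined with~\cref{cor:trail-solid}, to see that $V(C')$ contains a unique trail-solid vertex $c'$ (either $r$, or the head of the unique trail-directable edge of $a(B)$ entering $C'$). By~\cref{lem:trail-solid-aux-graph} we have $c'\in\{v^+,v^-:v\in\Pi_B\}\cup\{r\}$, so projecting back to $B$ gives a well-defined vertex $c\in\Pi_B\cup\{r\}$; since $c'$ is an endpoint in $a(B)$ of some edge of $C'=C$, the projection $c$ is an endpoint of that same edge in $B$, so $c\in V(C)$.

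For uniqueness, suppose $u\in V(C)\cap(\Pi_B\cup\{r\})$; then $u$ is incident in $B$ to some undirectable edge $e\in E(C)=E(C')$. In $a(B)$ the edge $e$ is incident to $r$ if $u=r$ and to $u^{\sigma(e,u)}$ if $u\in\Pi_B$, and in either case this vertex lies in $V(C')$ and is trail-solid by~\cref{lem:trail-solid-aux-graph}; the uniqueness of the trail-solid vertex of $C'$ then forces it to equal $c'$, and hence $u=c$. The main bookkeeping obstacle I anticipate is precisely this last step: a single vertex $v\in\Pi_B$ is split into two vertices $v^+,v^-$ in $a(B)$, and one must observe that at most one of these can appear in $V(C')$ incident to undirectable edges, since otherwise two distinct trail-solid vertices would coexist in $C'$. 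Everything else is routine unpacking of the correspondence $B\leftrightarrow a(B)$.
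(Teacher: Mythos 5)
Your proposal is correct and follows essentially the same route as the paper: both pass to the auxiliary graph via \cref{lem:comparing_components}, identify the trail-solid vertices of $a(B)$ via \cref{lem:trail-solid-aux-graph}, and conclude from the uniqueness of the trail-solid vertex of the corresponding trail-undirectable component of $a(B)$. Your extra care about the splitting of $v\in\Pi_B$ into $v^+,v^-$ is exactly the point the paper handles with its observation that $v\in V(C)$ if and only if $\mathbf{a}(v)\cap V(C')\neq\emptyset$.
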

\begin{proof}
Let $C$ be some undirectable component of $B$. By~\cref{lem:comparing_components}, there exists a trail-undirectable component $C'$ of $a(B)$ with $E(C)=E(C')$.
Note that for every $v \in V(B)$ we have $v \in V(C)$ if and only if $\mathbf{a}(v) \cap V(C') \neq \emptyset$.
By~\cref{lem:trail-solid-aux-graph}, for every vertex $v \in V(B) \setminus \{r\}$ we have $v \in \Pi_B \cup \{r\}$ if some $w \in \mathbf{a}(v)$ is trail-solid, moreover, every $w \in \mathbf{a}(v)$ is trail-solid if $v \in \Pi_B \cup \{r\}$.
We can deduce that there exists a unique vertex of $C$ in $\Pi_B \cup \{r\}$, since there is a unique trail-solid vertex of $C'$.
\end{proof}

Let $B$ be a reachable bidirected graph rooted at $r$ and let $C_1, \hdots, C_k$ be its undirectable components. For each $i \in [k]$, let $c_i$ be the unique vertex of $C_i$ in $\Pi_B \cup \{r\}$. Let $\hat{B}$ be the bidirected graph formed from $B$ by contracting component $C_i$ into vertex $c_i$ for $i = 1, \hdots, k$. 
%, i.e.\ deleting $V(C_i) \setminus \{c_i\}$ and $E(C_i)$ such that edges incident with $C_i$ become with $c_i$.
Note that $E(\hat{B})$ is exactly the set of directable edges of $B$. (However, we again stress that $\hat{B}$ is not a substructure of $B$ in the intuitive sense. Specifically, two edges $e$ and $f$ that are incident in $\hat{B}$ at vertex $c_i$ for $i = 1, \hdots, k$ need not be incident in $B$.)
Let $D_B^{\mathrm{path}}$ be the rooted directed graph formed from $\hat{B}$ by giving every edge its natural orientation; call $D_B^{\mathrm{path}}$ the {\em skeleton} of $B$. See~\cref{fig:vertex-decomp} for an example and compare with the trail-skeleton in~\cref{fig:edge-decomp}.

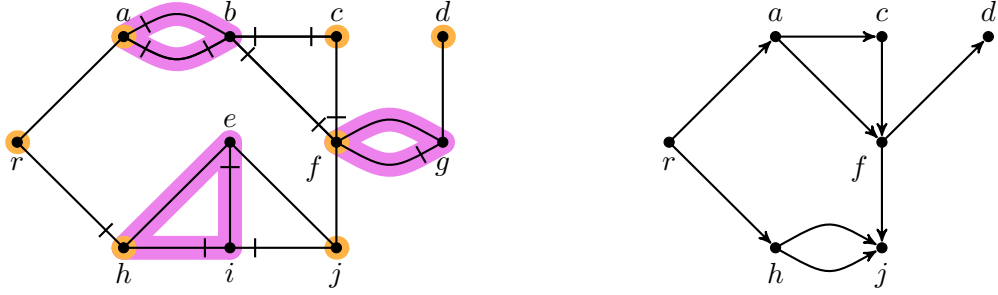
\begin{figure}
\centering
\begin{subfigure}[b]{0.54\textwidth}
\centering
\begin{tikzpicture}
\coordinate (r) at (0,1.4);
\coordinate (a) at (1.4,2.8);
\coordinate (b) at (2.8,2.8);
\coordinate (c) at (4.2,2.8);
\coordinate (d) at (5.6,2.8);
\coordinate (e) at (2.8,1.4);
\coordinate (f) at (4.2,1.4);
\coordinate (g) at (5.6,1.4);
\coordinate (h) at (1.4,0);
\coordinate (i) at (2.8,0);
\coordinate (j) at (4.2,0);

%magenta highlight
\foreach \i/\j in {h/e,e/i,h/i}{
    \draw[line width=9pt, LavenderMagenta, line cap=round] (\i) to (\j);
}

\foreach \i/\j in {a/b,f/g}{
    \draw [line width=9pt, LavenderMagenta, line cap=round] (\i) ..controls +($-0.5*(\i)+0.5*(\j)+(0,+0.4)$) .. (\j);
    \draw [line width=9pt, LavenderMagenta, line cap=round] (\i) ..controls +($-0.5*(\i)+0.5*(\j)+(0,-0.4)$) .. (\j);
}
%orange highlight
\foreach \i in {r,a,c,d,f,h,j}{
    \draw[line width=9pt, PastelOrange, line cap=round] (\i) to (\i);
}

%vertices with labels
\foreach \i in {a,b,c,d,e}{
    \node[dot] at (\i) [label=above:$\i$] {};
}
\foreach \i in {f}{
    \node[dot] at (\i) [label=below left:$\i$] {};
}
\foreach \i in {h,i,j,g,r}{
    \node[dot] at (\i) [label=below:$\i$] {};
}
%edges
\foreach \i/\j in {r/a, h/e, e/j, f/j, d/g}{
    \draw[thick] (\i) to (\j);
}
\foreach \i/\j in {r/h, h/i, i/e, j/i, c/f, b/c, c/b, b/f, f/b}{
    \draw[-e+8,thick] (\i) to (\j);
}
\foreach \i/\j in {a/b, b/a, f/g}{
     \draw [-e+8,thick] (\i) ..controls +($-0.5*(\i)+0.5*(\j)+(0,-0.4)$) .. (\j);
}
\foreach \i/\j in {b/a}{
    \draw [-e+8,thick] (\i) ..controls +($-0.5*(\i)+0.5*(\j)+(0,+0.4)$) .. (\j);
}
\foreach \i/\j in {f/g}{
    \draw [thick] (\i) ..controls +($-0.5*(\i)+0.5*(\j)+(0,+0.4)$) .. (\j);
}

\end{tikzpicture}
\caption{A bidirected graph $B$ rooted in $r$. The vertices of $\Pi_B \cup \{r\}$ are marked orange and the undirectable edges of $B$ are marked purple.}
\end{subfigure}
\hfill
\begin{subfigure}[b]{0.45\textwidth}
\centering
\begin{tikzpicture}
\coordinate (r) at (0,1.4);
\coordinate (a) at (1.4,2.8);
\coordinate (c) at (2.8,2.8);
\coordinate (d) at (4.2,2.8);
\coordinate (f) at (2.8,1.4);
\coordinate (h) at (1.4,0);
\coordinate (j) at (2.8,0);

%vertices with labels
\foreach \i in {a,c,d}{
    \node[dot] at (\i) [label=above:$\i$] {};
}
\foreach \i in {r,h,j}{
    \node[dot] at (\i) [label=below:$\i$] {};
}
\foreach \i in {f}{
    \node[dot] at (\i) [label=below left:$\i$] {};
}
%edges
\foreach \i/\j in {r/a,a/c,a/f/c/f,f/d,r/h,f/j,c/f}{
    \draw[arc,thick] (\i) to (\j);
}
\draw [arc,thick] (h) ..controls +($-0.5*(h)+0.5*(j)+(0,+0.4)$) .. (j);
\draw [arc,thick] (h) ..controls +($-0.5*(h)+0.5*(j)+(0,-0.4)$) .. (j);
\end{tikzpicture}
\caption{The skeleton of $B$. \newline \newline}
\end{subfigure}
\caption{A rooted bidirected graph and its skeleton.}
    \label{fig:vertex-decomp}
\end{figure}

\begin{lemma}\label{lem:vertices-of-skeleton}
    For every rooted reachable bidirected graph $B$ we have $V(\skeleton{B}) = \Pi_B \cup \{r\}$.
\end{lemma}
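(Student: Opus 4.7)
The plan is to prove both inclusions of $V(\skeleton{B}) = \Pi_B \cup \{r\}$, using \cref{cor:vtx-decomp-unique-solid} as the main ingredient. By construction, $V(\skeleton{B}) = V(\hat{B}) = \bigl(V(B) \setminus \bigcup_{i=1}^k V(C_i)\bigr) \cup \{c_1, \ldots, c_k\}$, so the task is to identify this set with $\Pi_B \cup \{r\}$.

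For the forward inclusion $\Pi_B \cup \{r\} \subseteq V(\skeleton{B})$, I would take any $v \in \Pi_B \cup \{r\}$ and note that either $v$ avoids all undirectable components (so it survives contraction in $\hat{B}$) or $v$ lies in some $C_i$, in which case \cref{cor:vtx-decomp-unique-solid} forces $v = c_i$, since $C_i$ contains a unique vertex of $\Pi_B \cup \{r\}$.

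For the reverse inclusion $V(\skeleton{B}) \subseteq \Pi_B \cup \{r\}$, I would argue contrapositively: assuming $v \notin \Pi_B \cup \{r\}$, I want to produce an undirectable edge incident to $v$, which will place $v$ in some $C_i$ with $v \neq c_i$ (and thus contract $v$ away). Since $v \neq r$ and $v \notin \Pi_B$, there exist both an $r$--$(v,+)$~path $P^+$ and an $r$--$(v,-)$~path $P^-$. Letting $e$ be the terminal edge of $P^+$ and $\ve$ its orientation with head $v$, the path $P^+$ is itself an $r$--$\ve$~almost path, and I claim that $P^- \circ \ev$ is an $r$--$\ev$~almost path. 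The sign condition at $v$ is satisfied because $e$ has sign $+$ at $v$ (as the terminal edge of $P^+$) while $P^-$ arrives at $v$ with sign $-$; removal of the last edge leaves the path $P^-$. Thus $e$ is undirectable and the conclusion follows from Corollary \ref{cor:vtx-decomp-unique-solid} as above.

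The only delicate point, which I expect to be the main obstacle, is verifying that $e \notin E(P^-)$ so that $P^- \circ \ev$ is a legitimate trail. I would handle this as follows: if $e$ appeared in $P^-$, then since $v$ is the terminal vertex of the path $P^-$ and paths do not repeat vertices, $e$ would have to be traversed ending at $v$, making it the last edge of $P^-$; but then $e$ would have sign $-$ at $v$, contradicting that $e$ has sign $+$ at $v$. Beyond this sign-chase, the argument is a clean combination of the construction of $\skeleton{B}$ with \cref{cor:vtx-decomp-unique-solid}.
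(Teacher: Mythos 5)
Your proposal is correct and follows essentially the same route as the paper: the forward inclusion comes directly from the construction of $\skeleton{B}$ together with \cref{cor:vtx-decomp-unique-solid}, and the reverse inclusion uses the same witness pair $P^+$ and $P^^- \circ \ev$ (the paper's $P$ and $Q+e$) to show the terminal edge is undirectable. Your extra check that $e \notin E(P^-)$ is a detail the paper leaves implicit, and your sign argument for it is right.
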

\begin{proof}
    By the construction of $\skeleton{B}$, $\Pi_B \cup \{r\} \subseteq V(\skeleton{B})$. Suppose for a contradiction that there is a $v \in V(\skeleton{B}) \setminus (\Pi_B \cup \{r\})$. Then there is an $r$--$(v,+)$~path $P$ and an $r$--$(v,-)$~path $Q$ in $B$. Let $e$ be the terminal edge of $P$. Then the almost paths $P$ and $Q + e$ witness that $e$ is undirectable. This implies that $v$ is contained in some undirectable component $C$ of $B$. Since $v \in \skeleton{B}$ the construction of $\skeleton{B}$ ensured that $v \in (\Pi_B \cup \{r\})$, a contradiction.
\end{proof}

    Now we investigate the correspondence between the skeleton and the trail-skeleton.
    Changing a bidirected graph by switching all signs at some vertex $v$, i.e.\ turning all $+$~signs at $v$ into $-$~signs and simultaneously turning all $-$~signs at $v$ into $+$~signs, is called a \emph{sign-switch}.
    We write $B' \equiv B$, if a bidirected graph $B'$ is obtained from a bidirected graph $B$ by a sequence of sign-switches.
    Note that sign-switches do not affect what counts as a path.
\begin{lemma}\label{lem:path_skeleton_to_trail_skeleton}
    For every rooted reachable bidirected graph $B'$ there exists a bidirected graph $B$ with $B' \equiv B$ and $a(\skeleton{B}) = \trailskeleton{a(B)}$.
\end{lemma}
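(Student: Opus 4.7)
The plan is to obtain $B$ from $B'$ by sign-switching at each plain vertex $v \in \Pi_{B'}$ for which there is an $r$--$(v,+)$~path in $B'$. Writing $\beta_v$ for the unique sign $\alpha$ with an $r$--$(v,\alpha)$~path (at each path-reachable plain $v$), this produces $B \equiv B'$ with $\beta_v = -$ for every $v \in \Pi_B$. Since sign-switches preserve $r$-paths, they preserve $\Pi$, the directable edges and their natural orientations; in particular $\Pi_B = \Pi_{B'}$, and $\skeleton{B}$ has the same underlying directed structure as $\skeleton{B'}$. A single sign-switch at $v$ flips the arrival sign of every $r$-path ending at $v$, so this choice indeed yields $\beta_v = -$ throughout $\Pi_B$.

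I would then match vertex sets. By \cref{lem:vertices-of-skeleton}, $V(\skeleton{B}) = \Pi_B \cup \{r\}$, and since $\skeleton{B}$ is a rooted directed graph, every non-root vertex is plain; thus $\Pi_{\skeleton{B}} = \Pi_B$ and $V(a(\skeleton{B})) = \{r\} \cup \{v^+, v^- : v \in \Pi_B\}$. By \cref{lem:trail-solid-aux-graph}, the right-hand side equals $V(\trailskeleton{a(B)})$. For the edges, \cref{cor:reach}(b) identifies the directable edges of $B$ with the non-auxiliary trail-directable edges of $a(B)$, while the proof of \cref{lem:comparing_components} shows that auxiliary edges of $a(B)$ are themselves trail-directable. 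Hence the edge sets agree, and the orientations of non-auxiliary edges coincide in both graphs by \cref{cor:reach}(b).

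The main calculation is the orientation of the auxiliary edges. Using \cref{lem:aux_path_bijection} together with \cref{lem:aux-graph-trail-to-almost path}, one verifies that an $r$-trail in $a(B)$ ends at $\vec{v^\alpha v^{-\alpha}}$ iff there is an $r$-path in $B$ to $(v, \alpha)$; hence the natural orientation of $v^+v^-$ in $\trailskeleton{a(B)}$ is $\vec{v^{\beta_v} v^{-\beta_v}}$. With $\beta_v = -$, this is $\vec{v^- v^+}$, matching the orientation of the aux edge in $a(\skeleton{B})$ under the convention pairing the $+$-sign end of an arc with its head. For alignment at non-plain vertices $v \in V(C_i) \setminus \{c_i\}$, I would check that every undirectable edge incident with $c_i$ has sign $-\beta_{c_i} = +$ at $c_i$ (otherwise a sign-analysis of the two orientations of the edge would show that it is in fact directable), so by \cref{cor:vtx-decomp-unique-solid} and \cref{lem:comparing_components} the trail-undirectable component of $a(B)$ corresponding to $C_i$ has $c_i^+$ as its trail-solid vertex. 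Consequently the contraction in $\trailskeleton{a(B)}$ sends each directable edge incident with $V(C_i) \setminus \{c_i\}$ to $c_i^+$, which is where $a(\skeleton{B})$ places those same edges after contracting $V(C_i)$ to $c_i$ in $\skeleton{B}$.

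The main obstacle is threading the reachability bijections between $B$ and $a(B)$ through the construction, and verifying the precise match at non-plain vertices of the undirectable components, where the convention chosen for interpreting the directed graph $\skeleton{B}$ as a bidirected graph is essential for the two splitting/contraction procedures to commute.
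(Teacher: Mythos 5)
Your overall strategy is the same as the paper's: use sign-switches to normalise the signs at the plain vertices so that the splitting construction $a(\cdot)$ and the contraction construction $D^{\mathrm{path}}_{(\cdot)}$ commute, then match vertex sets, edge sets and orientations, treating the auxiliary edges and the contracted undirectable components separately. However, you have normalised to the wrong sign, and the step where you claim the auxiliary-edge orientations match is false. The auxiliary edge $v^+v^-$ has, by the fixed definition of $a(\cdot)$, sign $+$ at $v^-$ and sign $-$ at $v^+$; under the stated convention ($+$ end $=$ head) its orientation in $a(\skeleton{B})$ is therefore $\vec{v^+v^-}$, not $\vec{v^-v^+}$. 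Since the natural orientation of $v^+v^-$ in $a(B)$ is $\vec{v^{\beta_v}v^{-\beta_v}}$, equality forces $\beta_v=+$, i.e.\ one must sign-switch at exactly those plain vertices reachable with sign $-$ — the opposite of your choice. The same inversion propagates through your last paragraph: with $\beta_{c_i}=-$ the undirectable edges at $c_i$ indeed have sign $+$ there, so the trail-undirectable component of $a(B)$ contracts onto $c_i^+$; but in $a(\skeleton{B})$ the directable edges leaving $C_i$ are out-edges of $c_i$ in $\skeleton{B}$, carry sign $-$ at $c_i$, and hence attach to $c_i^-$. So with your $B$ the two graphs differ by the involution swapping $v^+\leftrightarrow v^-$ at every plain vertex: they are isomorphic but not equal, and equality of labelled graphs is what the lemma asserts (and what the subsequent connectivity computation $\lambda_{\trailskeleton{a(B)}}(r,x^\alpha)=\lambda_{a(\skeleton{B})}(r,x^\alpha)$ uses).

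The error is easy to repair — replace your switching rule by its complement, after which your verification of the auxiliary edges and of the component contractions goes through as in the paper — but as written the central calculation on which the whole construction hinges is carried out incorrectly. Two smaller remarks: your reduction of the natural orientation of the auxiliary edge to $\beta_v$ via \cref{lem:aux_path_bijection} and \cref{lem:aux-graph-trail-to-almost path} is correct and matches the paper's use of \cref{lem:comparing_components}; and you should say explicitly what happens at plain vertices that are not path-reachable (in a reachable graph these are exactly the isolated vertices, for which no normalisation is needed), since your $\beta_v$ is undefined there.
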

\noindent
Here, we define $a(\skeleton{B})$ similarly as in~\cref{sec:aux-graph}:
We interpret $\skeleton{B}$ as a bidirected graph, i.e.\ giving heads of edges sign~$+$ and tails of edges sign~$-$. Then we apply the auxiliary graph from~\cref{sec:aux-graph} to $\skeleton{B}$ and let $a(\skeleton{B})$ be the directed graph obtained from it by turning $+$~signs into heads and $-$~signs into tails.
\begin{proof}
    By~\cref{lem:comparing_components}, every auxiliary edge of $a(B')$ is trail-directable. Let $B$ be the bidirected graph with $B \equiv B'$ such that for every $v \in \Pi_B$ the natural orientation of $v^+v^-$ in $a(B)$ is $\vec{v^+v^-}$.
    The choice of $B$ implies that every non-auxiliary edge incident with $v^+$ is trail-directable and has head $v^+$ in $a(B)$, and there is no trail-directable non-auxiliary edge that has head $v^-$ in $a(B)$.

    Since $\skeleton{B}$ is a directed graph, every vertex of $V(\skeleton{B}) \setminus \{r\}$ is plain.
     By~\Cref{lem:vertices-of-skeleton,lem:trail-solid-aux-graph}, we have
     \[
    V(a(\skeleton{B})) = \{v^+, v^-: v \in V(\skeleton{B}) \setminus \{r\}\} \cup \{r\} = \{v^+, v^-: v \in \Pi_B\} \cup \{r\} = V(\trailskeleton{a(B)}).
     \]

    We note that the set $F:=\{\vec{v^+v^-}: v \in \Pi_B\}$ is included both in $E(a(\skeleton{B}))$ and $E(\trailskeleton{a(B)})$ by the definition of the auxiliary graph and the choice of $B$.
    Next we show that the set of directable edges of $B$ corresponds one-to-one to $E(a(\skeleton{B})) \setminus F$ such that the natural orientation in $B$ and the orientation in $a(\skeleton{B})$ coincide.
    By definition of $\skeleton{B}$, $\ve$ is the natural orientation of a directable edge of $B$ if and only if $\ve$ is an edge of $\skeleton{B}$. Furthermore, $\ve$ is an edge of $\skeleton{B}$ if and only if $\ve$ is in $E(a(\skeleton{B})) \setminus F$.

    Moreover, the set of directable edges of $B$ corresponds one-to-one to $E(\trailskeleton{a(B)}) \setminus F$ such that the natural orientation in $B$ and the orientation in $\trailskeleton{a(B)}$ coincide. By~\cref{cor:reach}, $\ve$ is the natural orientation of a directable edge in $B$ if and only if $\ve$ is the natural orientation of a trail-directable edge in $E(a(B)) \setminus \{v^+v^-: v \in \Pi_B\}$.
    Furthermore, $\ve$ is the natural orientation of a trail-directable edge in $E(a(B)) \setminus \{v^+v^-: v \in \Pi_B\}$ if and only if $\ve$ is contained in $E(\trailskeleton{a(B)}) \setminus F$.

    Thus it suffices to show that for every natural orientation of a directable edge of $B$ the corresponding heads and tails in $a(\skeleton{B})$ and $\trailskeleton{a(B)}$ coincide. Let $\ve$ be an arbitrary natural orientation of a directable edge of $B$, and let $\rho \in \{\textrm{head, tail}\}$ be arbitrary. Furthermore, let $a$ be $\rho$ of $\ve$ in $B$.

    \begin{description}
        \item[If $a = r$] Then $r$ is the $\rho$ of $\ve$ in both $a(\skeleton{B})$ and $\trailskeleton{a(B)}$.
        \item[If $a \in \Pi_B$] Then $a$ is the $\rho$ of $\ve$ in $\skeleton{B}$. Thus the $\rho$ of $\ve$ in $a(\skeleton{B})$ is $a^+$ if $\rho = \textrm{head}$ and $a^-$ if $\rho = \textrm{tail}$ by the definition of the auxiliary graph. Furthermore, the $\rho$ of $\ve$ in $a(B)$ is $a^+$ if $\rho = \textrm{head}$ and $a^-$ if $\rho = \textrm{tail}$ by the choice of $B$. Note that the $\rho$ of $\ve$ coincides in $a(B)$ and $\trailskeleton{a(B)}$.
        \item[If $a \notin \Pi_B \cup \{r\}$] Then $a$ is the $\rho$ of $\ve$ in $a(B)$ and $a$ is not trail-solid in $a(B)$ by~\cref{lem:trail-solid-aux-graph}.  Thus $\ve$ witnesses that $\rho = \textrm{tail}$ by~\cref{cor:trail-solid}. On the one hand, there exists an undirectable component $C$ in $B$ that contains $a$. The tail of $\ve$ in $\skeleton{B}$ is the unique plain vertex $w$ of $C$. Then the tail of $\ve$ in $a(\skeleton{B})$ is $w^-$.
        On the other hand, there exists a trail-undirectable component $C'$ in $a(B)$ with $E(C') = E(C)$, by~\cref{lem:comparing_components}, and thus $C'$ contains $a$. Note that $w^-$ and $w^+$ are trail solid by~\cref{lem:trail-solid-aux-graph}, and at least one of them is contained in $C'$. Note that $w^-$ is contained in $C'$ since no edge incident with $w^+$ is trail undirectable by the choice of $B$. Thus $w^-$ is the unique trail-solid vertex of $C'$ and therefore $w^-$ is the tail of $\ve$ in $\trailskeleton{a(B)}$. \qedhere
    \end{description}
\end{proof}

By~\cref{lem:path_skeleton_to_trail_skeleton}, one can observe that the skeleton of a clean bidirected graph has similar properties for internally vertex-disjoint paths to those which the trail-skeleton of an edge-clean bidirected graph has with respect to edge-disjoint paths. As an example, we show:

\begin{theorem}
Let $B$ be a clean, path-reachable bidirected graph rooted at $r$.
Then
\begin{itemize}
    \item $\kappa_B(r,x) = \kappa_{\skeleton{B}}(r,x)$ for all $x \in V(\skeleton{B}) \setminus \{r\}$; and
    \item $\kappa_B(r,x) = 1$ for all $x \in V(B) \setminus V(\skeleton{B})$.
\end{itemize}
\end{theorem}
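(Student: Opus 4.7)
The second bullet is immediate: for $x \in V(B) \setminus V(\skeleton{B})$, \cref{lem:vertices-of-skeleton} gives $x \notin \Pi_B \cup \{r\}$, so $x \neq r$ and $x$ is not plain, and \cref{lem:non_plain_vertex} yields $\kappa_B(r,x) = 1$.

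For the first bullet, fix $x \in V(\skeleton{B}) \setminus \{r\}$, which by \cref{lem:vertices-of-skeleton} coincides with $x \in \Pi_B \setminus \{r\}$. The plan is to translate both sides to edge-disjoint path counts in the auxiliary graph and to match them via the two skeleton constructions. After discarding any isolated non-root vertex of $B$ (which affects no connectivity from $r$), \cref{cor:path-reachable} lets us assume $a(B)$ is path-reachable; by \cref{cor:reach}, $a(B)$ is also edge-clean and trail-reachable, and by \cref{lem:trail-solid-aux-graph} both $x^+$ and $x^-$ lie in $V(\trailskeleton{a(B)})$. Combining \cref{cor:aux_connectivity}(i) with \cref{lem:disjoint_paths_skeleton} (applied to $a(B)$) then gives
\[
\kappa_B(r,x) \;=\; \max_{\alpha \in \{+,-\}} \lambda_{a(B)}^{\alpha,\textrm{ path}}(r,x^\alpha) \;=\; \max_{\alpha \in \{+,-\}} \lambda_{\trailskeleton{a(B)}}(r,x^\alpha).
\]

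Next I would invoke \cref{lem:path_skeleton_to_trail_skeleton} to obtain some $B^\star \equiv B$ with $\trailskeleton{a(B^\star)} = a(\skeleton{B^\star})$. Sign-switches do not affect what counts as a path, hence preserve natural orientations, so $\skeleton{B^\star} = \skeleton{B}$; they also leave edge-disjoint path counts unchanged, because a sign-switch of $B$ at a plain vertex $v$ only relabels $v^+ \leftrightarrow v^-$ in $a(B)$, which is absorbed by the symmetric quantity $\max_{\alpha}$. Thus $\max_{\alpha} \lambda_{\trailskeleton{a(B)}}(r,x^\alpha) = \max_{\alpha} \lambda_{a(\skeleton{B})}(r,x^\alpha)$. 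Now $a(\skeleton{B})$ is the classical vertex-split digraph of $\skeleton{B}$: every vertex $v$ becomes $v^+,v^-$ joined by a unique auxiliary arc $\vec{v^+v^-}$, and arcs of $\skeleton{B}$ enter on the $+$-side and exit from the $-$-side. Because cleanness of $B$ forbids any directable edge into $r$, the vertex $r$ has no incoming arc in $\skeleton{B}$, which by an elementary parity argument forces every bidirected $r$-path in $\skeleton{B}$ to proceed forward along directed arcs; hence bidirected $r$--$x$~paths in $\skeleton{B}$ coincide with directed $r$--$x$~paths. The standard vertex-splitting correspondence then gives $\lambda_{a(\skeleton{B})}(r,x^+) = \kappa_{\skeleton{B}}(r,x)$, while $\lambda_{a(\skeleton{B})}(r,x^-) \leq 1$ since every such path must use the unique arc $\vec{x^+x^-}$. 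Therefore $\max_{\alpha} \lambda_{a(\skeleton{B})}(r,x^\alpha) = \kappa_{\skeleton{B}}(r,x)$, and chaining the equalities completes the proof.

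The main obstacle I anticipate is the bookkeeping around \cref{lem:path_skeleton_to_trail_skeleton}: one must check that the $v^+ \leftrightarrow v^-$ relabelling induced on $a(B)$ (and hence on $\trailskeleton{a(B)}$) by passing to $B^\star$ is genuinely absorbed by the symmetric $\max_{\alpha}$, and that the identification of $a(\skeleton{B})$ with the standard vertex-split digraph is compatible with the labelling conventions. The remainder is a mechanical chase through the machinery of the previous sections, the only genuinely new combinatorial input being the forward-only-from-$r$ observation for bidirected paths in the directed graph $\skeleton{B}$.
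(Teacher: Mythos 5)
Your proposal is correct and follows essentially the same route as the paper: the second bullet via \cref{lem:vertices-of-skeleton} and \cref{lem:non_plain_vertex}, and the first bullet via the chain $\kappa_B(r,x)=\max_\alpha\lambda_{a(B)}^{\textrm{path}}(r,x^\alpha)=\max_\alpha\lambda_{\trailskeleton{a(B)}}(r,x^\alpha)=\max_\alpha\lambda_{a(\skeleton{B})}(r,x^\alpha)=\kappa_{\skeleton{B}}(r,x)$ using \cref{cor:aux_connectivity}, \cref{lem:disjoint_paths_skeleton}, and \cref{lem:path_skeleton_to_trail_skeleton}. The only cosmetic difference is that the paper closes the last equality by citing \cref{cor:aux_connectivity} again for $\skeleton{B}$ viewed as a bidirected graph, whereas you argue it directly via the classical vertex-splitting correspondence; your explicit handling of the sign-switch relabelling and of the forward-only behaviour of $r$-paths in $\skeleton{B}$ is sound (and slightly more careful than the paper's terse ``without loss of generality'').
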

\begin{proof}
    We can assume without loss of generality that $B$ is as in~\cref{lem:path_skeleton_to_trail_skeleton}.
    First, let $x \in V(\skeleton{B}) \setminus \{r\}$ be arbitrary. Then $\kappa_B(r,x) = \max_{\alpha \in \{+,-\}}\lambda_{a(B)}^{\textrm{path}}(r,x^\alpha)$ by~\cref{cor:aux_connectivity}.
    By~\cref{cor:path-reachable}, $a(B)$ is path-reachable and thus \Cref{lem:disjoint_paths_skeleton,lem:disjoint_paths_skeleton_inside} imply $\lambda_{a(B)}^{\textrm{path}}(r,x^\alpha)= \lambda_{\trailskeleton{a(B)}}(r,x^\alpha)$.
    Furthermore, by~\cref{lem:path_skeleton_to_trail_skeleton}, we have $\lambda_{\trailskeleton{a(B)}}(r,x^\alpha) = \lambda_{a(\skeleton{B})}(r,x^\alpha)$.
    Finally, $\max_{\alpha \in \{+,-\}}\lambda_{a(\skeleton{B})}(r,x^\alpha) = \kappa_{\skeleton{B}}(r,x)$ by~\cref{cor:aux_connectivity}.

    Second, let $x \in V(B) \setminus V(\skeleton{B})$ be arbitrary. Then $x \notin \Pi_B \cup \{r\}$ by~\cref{lem:vertices-of-skeleton}.
    Since $B$ is clean, by~\cref{lem:non_plain_vertex}, we have $\kappa_B(r,x) = 1$.
\end{proof}

Finally, we note that, just as in the proof of~\cref{lem:edge-clean_characterisation}, one can observe that a bidirected graph $B$ is clean if and only if there does not exist an undirectable component containing $r$.

\section{Open questions}

We conclude with two open questions that ask about possible extensions of our work. 

First, given the relationship between bidirected graphs and graphs with perfect matchings, it seems natural to ask whether nice analogs of our structure theorems hold in graphs with perfect matchings. Our structure theorems have the following form: we model a bidirected graph using a directed graph whose structure closely resembles the structure of the bidirected graph. Since the correspondence between bidirected graphs and graphs with perfect matchings sends directed graphs to bipartite graphs with perfect matchings, a graph analog could involve modeling a graph with a perfect matching using a bipartite graph with a perfect matching whose structure closely resembles the original graph. 

We might hope that something even more specific holds: that in a highly connected graph with a perfect matching, we can find a highly connected bipartite matching minor. This would be a step towards the general case of Norin's Matching Grid Conjecture. Our paper shows that a similar result holds in bidirected graphs (under the clean and rooted assumptions), giving evidence and a possible roadmap for such a result.

% \begin{lemma}
%     Let $B$ a reachable bidirected graph rooted at $r$.
%     Then $B$ is clean if and only if there does not exist an undirectable component containing $r$.
% \end{lemma}
% \begin{proof}
%     The bidirected graph $B$ is not clean if and only if there exists a nontrivial $r$--$r$~almost path $P$.
%     If there exists a nontrivial $r$--$r$~almost path, then $P$ and $P^{-1}$ witness that the edges of $E(P)$ are undirectable. Thus $r$ is contained in a undirectable component. Conversely, if $r$ is contained in an undirectable component, there is an undirectable edge $e \in E(B)$ that is incident with $r$. Let $\ve$ be the orientation of $e$ whose head is $r$. Then there is an $r$--$\ve$~almost path $P$ since $e$ is undirectable. Note that $P$ is an $r$--$r$~almost path.
% \end{proof}

\section*{Acknowledgements}
We thank Ebrahim Ghorbani and Raphael W. Jacobs for valuable discussions about the structure of bidirected graphs.

\bibliography{reference}

\end{document}